\documentclass[12pt,a4paper]{amsart}
\usepackage[foot]{amsaddr}

\makeatletter

\usepackage[english]{babel}
\usepackage[utf8]{inputenc}
\usepackage{csquotes}
\usepackage[T1]{fontenc}
\usepackage{latexsym}
\usepackage[leqno]{amsmath}
\usepackage{amssymb,amsthm,amsfonts}
\usepackage{mathtools}
\usepackage{mathrsfs}
\usepackage{dsfont}
\usepackage{stmaryrd}
\usepackage{cancel}
\usepackage{xcolor}

\usepackage{todonotes}

\usepackage{geometry}
\usepackage{nicefrac}

\usepackage{enumerate}
\usepackage[shortlabels]{enumitem}
\setlist[enumerate]{label=(\alph*),font=\normalshape}
\setlist[itemize]{font=\normalshape}
\let\originalitem\item
\renewcommand{\item}[1][]{%
	\if\relax\detokenize{#1}\relax%
		\originalitem%
	\else%
		\originalitem[#1]%
		\phantomsection
		\def\@currentlabel{#1}
	\fi%
}

\usepackage{hyperref}

\usepackage[style=numeric-comp]{biblatex}

\usepackage[noabbrev,capitalize]{cleveref}

\let\originalleft\left
\let\originalright\right
\renewcommand{\left}{\mathopen{}\mathclose\bgroup\originalleft}
\renewcommand{\right}{\aftergroup\egroup\originalright}

\allowdisplaybreaks

\setlength{\topmargin}{-15mm}
\setlength{\parindent}{0pt}
\setlength{\parskip}{2ex plus 1ex minus 0.5ex}
\geometry{left=2cm, right=2cm, bottom=3cm}

\theoremstyle{plain}
\newtheorem{theorem}{Theorem}[section]
\newtheorem{definition}[theorem]{Definition}
\newtheorem{proposition}[theorem]{Proposition} 
\newtheorem{lemma}[theorem]{Lemma} 

\theoremstyle{definition}
\newtheorem{remark}[theorem]{Remark}

\AddToHook{env/definition/begin}{\crefalias{theorem}{definition}}
\AddToHook{env/proposition/begin}{\crefalias{theorem}{proposition}}
\AddToHook{env/lemma/begin}{\crefalias{theorem}{lemma}}
\AddToHook{env/corollary/begin}{\crefalias{theorem}{corollary}}
\AddToHook{env/notation/begin}{\crefalias{theorem}{notation}}
\AddToHook{env/remark/begin}{\crefalias{theorem}{remark}}

\newcommand{\C}{\mathbb{C}} 
\newcommand{\R}{\mathbb{R}} 
\newcommand{\Z}{\mathbb{Z}} 
\newcommand{\Zodd}{\mathbb{Z}_\mathrm{odd}}

\newcommand{\N}{\mathbb{N}} 
\newcommand{\Nodd}{\mathbb{N}_\mathrm{odd}}

\newcommand{\T}{\mathbb{T}}
\newcommand{\F}{\mathcal{F}}

\DeclareMathAlphabet{\othermathbb}{U}{bbold}{m}{n}
\newcommand{\bbone}{\othermathbb{1}}

\newcommand{\der}[2][]{\@ifnextchar\der{\,#1\mathrm{d}#2\!}{\,#1\mathrm{d}#2}}
\newcommand{\Der}{\mathrm{d}}
\DeclareMathOperator{\dist}{dist}
\DeclareMathOperator{\supp}{supp}

\DeclareMathOperator*{\essinf}{ess~inf}
\DeclareMathOperator{\sign}{sign}
\renewcommand{\Re}{\operatorname{Re}}

\newcommand{\landauo}{\hbox{o}}
\newcommand{\ee}{\mathrm{e}}
\newcommand{\ii}{\mathrm{i}}
\let\eps\varepsilon
\let\bd\partial
\let\wto\rightharpoonup
\let\embeds\hookrightarrow

\newcommand{\set}[1]{{\left\{ #1 \right\}}}
\newcommand{\Set}[1]{\bigl\{ #1 \bigr\}}
\newcommand{\abs}[1]{\left\lvert #1 \right\rvert}
\newcommand{\Abs}[1]{\bigl\lvert #1 \bigr\rvert}
\newcommand{\norm}[1]{\left\lVert #1 \right\rVert}
\newcommand{\floor}[1]{\left\lfloor #1 \right\rfloor}
\newcommand{\ceil}[1]{\left\lceil #1 \right\rceil}

\newcommand{\impvar}{\,\cdot\,}
\newcommand{\ip}[2]{\left\langle #1 , #2 \right\rangle}
\newcommand{\loc}{\mathrm{loc}}
\newcommand{\per}{\mathrm{per}}
\newcommand{\ess}{\mathrm{ess}}
\newcommand{\bfD}{\mathbf{D}}
\newcommand{\bfE}{\mathbf{E}}
\newcommand{\bfF}{\mathbf{F}}
\newcommand{\bfB}{\mathbf{B}}
\newcommand{\bfH}{\mathbf{H}}
\newcommand{\bfP}{\mathbf{P}}

\newcommand{\nlsuffix}[1]{\if#11{.1}\else{.2}\fi}
\newcommand{\calG}{\mathcal{G}}
\newcommand{\calN}{\mathcal{N}}
\newcommand{\calE}{\mathcal{E}}
\DeclareMathOperator*{\periodize}{Per}

\newcommand{\wop}{\mathcal{L}}
\newcommand{\iwop}[1][h]{\mathcal{L}^{-1}_{#1}}
\newcommand{\wdom}{\mathcal{H}}
\newcommand{\wdense}{\mathcal{D}}
\newcommand{\rfreq}{\mathfrak{R}}
\newcommand{\rproj}{P_\mathfrak{R}}

\newcommand{\el}{\mathfrak{c}}
\newcommand{\elmp}{\mathfrak{c}_\mathrm{mp}}
\newcommand{\elgs}{\mathfrak{c}_\mathrm{gs}}
\makeatother

\bibliography{bibliography}

\begin{document}
	\title[Breather solutions to nonlinear Maxwell equations]{Breather solutions to nonlinear Maxwell equations with retarded material laws}

	\author{Sebastian Ohrem$^1$}
	\email{sebastian.ohrem@kit.edu}
	\address{$^1$Institute for Analysis, Karlsruhe Institute of Technology (KIT), D-76128 Karlsruhe, Germany}
	
	\date{\today}
	\subjclass[2020]{Primary: 35Q61, 49J10; Secondary: 35C07, 78A50}
	\keywords{Maxwell equations, retarded nonlinear material law, polychromatic breather solution, variational method}

	\begin{abstract}
We consider Maxwell's equations for Kerr-type optical materials, which are magnetically inactive and have a nonlinear response to electric fields. This response consists of a linear plus a cubic term, which are both inhomogeneous with bounded coefficients. The cubic term is temporally retarded while the linear term has instantaneous and retarded contributions.
For slab waveguides we show existence of breathers, which are time-periodic, real-valued solutions that are localized in the direction perpendicular to the waveguide, and moreover they are traveling along one direction of the waveguide.
We find these breathers using a variational method which relies on the assumption that an effective operator related to the linear part of Maxwell's equations has a spectral gap about $0$. We also give examples of material coefficients, including nonperiodic materials, where such a spectral gap is present.
	\end{abstract}

	\maketitle


\section{Introduction and main results}

We consider Maxwell's equations 
\begin{align}\label{eq:maxwell}
	\begin{aligned}
		\nabla \cdot \bfD &= 0, \qquad& \nabla \times \bfE &= - \bfB_t,
		\\ \nabla \cdot \bfB &= 0, \qquad& \nabla \times \bfH &= \bfD_t	
	\end{aligned}
\end{align}
in $\R^3$ without changes and currents.
For the underlying material, we assume the constitutive relations
\begin{align}\label{eq:material:generic}
	\bfB = \mu_0 \bfH, \qquad \bfD = \epsilon_0 \bfE + \epsilon_0 \bfP(\bfE)
\end{align}
where $\mu_0, \epsilon_0 > 0$ denote vacuum permeability and permittivity. So the material is magnetically inactive and electrically active, with an electric displacement field $\bfD$ that depends nonlinearly on the electric field $\bfE$ through the polarization $\bfP(\bfE)$. We consider Kerr optical materials modelled by $\bfP(\bfE)$ consisting of a linear plus a cubic term of $\bfE$: the quadratic term is zero for silica glasses, and higher-order terms are omitted (cf. \cite{agrawal}). More precisely, we assume that the polarization is given either by
\refstepcounter{equation}
\begin{align}\label{eq:polarization:1}\tag{\theequation\nlsuffix1}
	\bfP(\bfE)(x, y, z, t) &= \int_0^\infty g(x, \tau) \bfE(x, y, z, t - \tau) \der \tau
	+ h(x) \int_0^\infty \nu(\tau) \bfE(x, y, z, t - \tau)^3 \der \tau
\intertext{where we abbreviate $\bfE^3 \coloneqq \abs{\bfE}^2 \bfE$, or by}
\label{eq:polarization:2}\tag{\theequation\nlsuffix2}
	\bfP(\bfE)(x, y, z, t) &= \int_0^\infty g(x, \tau) \bfE(x, y, z, t - \tau) \der \tau
	+ h(x) \left(\int_0^\infty \nu(\tau) \bfE(x, y, z, t - \tau) \der \tau\right)^3.
\end{align}
Note that the material coefficients $g, h, \nu$ depend only on one spatial direction $x$, which we call a slab material. Taking the curl of Faraday's law $\nabla \times \bfE = - \bfB_t$, one obtains from \eqref{eq:maxwell}, \eqref{eq:material:generic} the second order Maxwell's equation
\begin{align}\label{eq:second_order_maxwell}
	\nabla \times \nabla \times \bfE + \epsilon_0 \mu_0 \partial_t^2 \left(\bfE + \bfP(\bfE)\right) = 0.
\end{align} 

There are many results on breathers for nonlinear wave-type equations like \eqref{eq:second_order_maxwell} in the literature.
Monochromatic breathers, which are given by $\bfE(x, y, z, t) = \Re[\calE(x,y,z) \ee^{\ii \omega t}]$ with frequency $\omega > 0$ and profile $\calE \colon \R^3 \to \C^3$, reduce \eqref{eq:second_order_maxwell} to the elliptic problem 
\begin{align}\label{eq:time_harmonic_maxwell}
	\nabla \times \nabla \times \calE + (\chi_1(x,y,z) + \chi_3(x,y,z) \abs{\calE}^2) \calE = 0
\end{align}
with appropriate functions $\chi_1, \chi_3$ derived from \eqref{eq:second_order_maxwell} by neglecting higher order harmonics, i.e., terms proportional to $\ee^{\pm 3 \ii \omega t}$ in $\bfP(\bfE)$. 
Alternatively, higher order harmonics vanish if the nonlinear part of the polarization is given by the time-average
\begin{align*}
	\bfP_\mathrm{NL}(\bfE)(x,y,z,t) = h(x,y,z) \int_0^T \abs{\bfE(x,y,z,\tau)}^2 \der \tau \bfE(x,y,z,t),
\end{align*}
where $T \coloneqq \frac{2 \pi}{\omega}$. Saturated nonlinearities
\begin{align*}
	\nabla \times \nabla \times \calE + \chi(x,y,z, \abs{\calE}^2) \calE = 0,
\end{align*}
which are asymptotically linear as $\abs{\calE} \to \infty$, are also of interest. These were considered in a series of papers \cite{stuart90,stuart04,stuart_zhou96,stuart_zhou03,stuart_zhou10,stuart_zhou01,stuart_zhou05} by Stuart and Zhou. The authors considered transverse electric (TE) or transverse magnetic (TM) polarized waves in cylindrically symmetric waveguides, which reduce \eqref{eq:time_harmonic_maxwell} to a one-dimensional scalar equation that can, e.g., be treated variationally. 
More general nonlinearities $\chi$, including also power nonlinearities were investigated in \cite{bartsch_dohnal_plum_reichel,benci_fortunato,azzollini_et_al,hirsch_reichel17} for cylindrically or spherically symmetric solutions. 
The restriction to symmetric solutions can be overcome using a Helmholtz decomposition to deal with the kernel of the curl-curl-operator. This was investigated by Mederski et al. in a series of papers \cite{mederski_reichel,mederski15,mederski_schino22,mederski_schino24}. 
Mandel combined the dual variational method with Helmholtz decomposition in \cite{mandel_dual}, and considered spatially nonlocal nonlinearities in \cite{mandel_dual_nonlocal}. 
In \cite{dohnal_romani_bifurcation,dohnal_romani_bifurcation_corr} Dohnal and Romani obtained breathers by bifurcation from a simple eigenvalue of the linear problem. We refer to the survey paper \cite{bartsch_mederski_survey} for further results on monochromatic Maxwell equations.

We move to the topic of polychromatic breathers, which have multiple (usually infinitely many) supported frequencies and are given by
\begin{align*}
	\bfE(x, y, z, t) = \sum_{k \in \Z} \calE_k(x, y, z) \ee^{\ii k \omega t}.
\end{align*}

Let us first discuss the instantaneous nonlinearity $\bfP_\mathrm{NL}(E) = h(x,y,z) \bfE^3$. Here, the authors of \cite{dohnal_schnaubelt_tietz} considered breathers at an interface between two dielectrics and showed that these can be approximated on large but finite time scales by solutions of an amplitude equation, the nonlinear Schrödinger equation. 
Existence of true time-periodic solutions was shown in \cite{kohler_reichel,bruell_idzik_reichel} by Reichel et al. for materials where the linear or nonlinear part of the polarization consists of Dirac measures in space and using variational methods or bifurcation theory, respectively. 
In \cite{ohrem_reichel_quasilinearmaxwell} we considered bounded material coefficients with spatially localized nonlinear interaction, and obtained breathers variationally for instantaneous as well as some time-averaged polarizations. 

Regarding the retarded polarizations \eqref{eq:polarization:1} and \eqref{eq:polarization:2}, 
breathers were found in our recent paper \cite{ohrem_reichel_elliptic} using variational methods. 
The main difference between \cite{ohrem_reichel_elliptic} and this paper is a central property of the effective linear operator (see \eqref{eq:def:wop}). We consider hyperbolic operators with a spectral gap about $0$ while \cite{ohrem_reichel_elliptic} dealt with the elliptic case.

We solve the second-order Maxwell problem \eqref{eq:second_order_maxwell} using the polychromatic ansatz
\begin{align}\label{eq:maxwell_ansatz}
	\bfE(x, y, z, t) = w(x, t - \frac{1}{c} z) \cdot \begin{pmatrix}
		0 \\1 \\ 0
	\end{pmatrix}.
\end{align}
of a TE-polarized wave traveling with speed $c$ in $z$-direction.
The ansatz is divergence-free, so the curl-curl operator simplifies to $\nabla \times \nabla \times \bfE = -\Delta \bfE$. Normalizing the speed of light in vacuum to $c_0 \coloneqq (\epsilon_0 \mu_0)^{-\frac12} = 1$ and inserting into \eqref{eq:second_order_maxwell}, we obtain 
\begin{align}\label{eq:scalar_wave:generic}
	- \partial_x^2 w - \frac{1}{c^2} \partial_t^2 w + \partial_t^2 (w + P(w)) = 0
\end{align}
where the scalar polarization $P(w)$ is given by
\refstepcounter{equation}
\begin{align}\label{eq:scalar_polarization:1}\tag{\theequation\nlsuffix1}
	P(w)(x, t) &= \int_0^\infty g(x, \tau) w(x, t - \tau) \der \tau
	+ h(x) \int_0^\infty \nu(\tau) w(x, t - \tau)^3 \der \tau
\intertext{or}
\label{eq:scalar_polarization:2}\tag{\theequation\nlsuffix2}
	P(w)(x, t) &= \int_0^\infty g(x, \tau) w(x, t - \tau) \der \tau
	+ h(x) \left(\int_0^\infty \nu(\tau) w(x, t - \tau) \der \tau\right)^3,
\end{align}
corresponding to \eqref{eq:polarization:1} and \eqref{eq:polarization:2}, respectively. To include instantaneous linear material responses, we assume a decomposition
\begin{align}\label{eq:g_decomposition}
	g(x, \tau) = g_0(x) \delta_0(\tau) + g_1(x, \tau)
\end{align}
with $\delta_0$ being the Dirac measure at $0$ and $g_0, g_1$ bounded. 
On the other hand, we assume that the nonlinear material response has no instantaneous contribution.

We show existence of breather solutions which solve Maxwell's equations pointwise and are infinitely differentiable in time.
The precise definition is given next. 

\begin{definition}\label{def:smooth_solution}
	We call $\bfE, \bfD, \bfB, \bfH \colon \R^3 \times \R \to \R^3$ \emph{breather solutions} to Maxwell's equations with polarization \eqref{eq:polarization:1} \textup{[}or \eqref{eq:polarization:2}\textup{]} with time-period $T > 0$ traveling with speed $c$ in $z$-direction if each field $\bfF \in \set{\bfE, \bfD, \bfB, \bfH}$ satisfies
	\begin{align*}
		\bfF(x, y, z, t + T)
		= \bfF(x, y, z, t) 
		= \bfF(x, y, z + c \tau, t + \tau)
	\end{align*} 
	and if for all domains of the form $\Omega = \R \times [y_1, y_2] \times [z_1, z_2] \times [t_1, t_2]$ it has the regularity
	\begin{align*}
		\partial_t^n \partial_x^m \bfF \in L^2(\Omega; \R^3) \cap L^\infty(\Omega; \R^3)
	\end{align*}
	for $n \in \N_0$ and $m \in \set{0, \dots, \overline{m}(\bfF)}$ where $\overline{m}(\bfE) = 2, \overline{m}(\bfD) = 0,\overline{m}(\bfB) = \overline{m}(\bfH) = 1$. Moreover, we require \eqref{eq:maxwell}, \eqref{eq:material:generic}, and \eqref{eq:polarization:1} \textup{[}or \eqref{eq:polarization:2}\textup{]} to hold pointwise almost everywhere.
\end{definition}

Next we give two examples of material parameters $g_0, g_1, h, \nu$ for which we can show existence of breather solutions. 
In the first example, we consider a spatially periodic linear material response, i.e., $g_0$ and $g_1$ are periodic in $x$.

\begin{theorem}\label{thm:example:periodic}
	Let $c \in (0, \infty), \theta \in (0, 1) \setminus \Set{\frac12}, T, X > 0$. Further let $g_1^\per, h^\per, h^\loc \in L^\infty(\R; \R)$ such that $g_1^\per, h^\per$ are $X$-periodic, $h^\per$ is positive almost everywhere, $h^\loc \geq 0$ and $h^\loc(x) \to 0$ as $\abs{x} \to \infty$. We set $\omega \coloneqq \frac{2 \pi}{T}$ and define potentials $g_0, g_1, h, \nu$ by
	\begin{align*}
		g_0(x) &\coloneqq g_0(x; \theta, X) \coloneqq \frac{1}{c^2} - 1 + \begin{cases}
			\frac{T^2}{16 \theta^2 X^2}, & x \in (0, \theta X) + X \Z, 
			\\ \frac{T^2}{16 (1 - \theta)^2 X^2}, & x \in (\theta X, X) + X \Z,
		\end{cases}
		\\ g_1(x, t) &\coloneqq g_1^\per(x) \cos(\omega t) \abs{\cos(\omega t)} \bbone_{[0, T]}(t),
		\\ h(x) &\coloneqq h^\per(x) + h^\loc(x),
		\\ \nu(t) &\coloneqq \dist(t, T \Z) \bbone_{[0, T]}(t)
	\end{align*}
	for $x, t \in \R$.
	Then for polarization \eqref{eq:polarization:1} as well as polarization \eqref{eq:polarization:2}, there exist infinitely many distinct breather solutions with period $T$ and speed $c$ in the sense of \cref{def:smooth_solution}.
\end{theorem}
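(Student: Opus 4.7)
The theorem is a verification result for the paper's general variational framework: given the abstract hypotheses of a spectral gap about $0$ for the effective linear operator $\wop$ plus structural conditions on $h$ and $\nu$, infinitely many breathers exist. My task is therefore to establish those hypotheses for the prescribed material coefficients.

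First I apply the ansatz \eqref{eq:maxwell_ansatz} to reduce to \eqref{eq:scalar_wave:generic} for $w(x,s)$ with $s = t - \tfrac{1}{c}z$. Fourier-expanding $w(x,s) = \sum_{k \in \Z} w_k(x)\ee^{\ii k\omega s}$ diagonalises the linear part of \eqref{eq:scalar_wave:generic} into the family of Hill operators
\begin{equation*}
	\wop_k u = -u'' - k^2\omega^2 \bigl[1 - \tfrac{1}{c^2} + g_0(x) + \hat g_1(x,k)\bigr] u,
	\qquad
	\hat g_1(x,k) \coloneqq \int_0^\infty g_1(x,\tau)\ee^{-\ii k\omega\tau}\der\tau.
\end{equation*}
The function $\phi(t) \coloneqq \cos(\omega t)\abs{\cos(\omega t)}$ is anti-$T/2$-periodic, so $\hat\phi(k) = 0$ for even $k$. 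Together with the observation that the cubic nonlinearity and the kernel $\nu$ preserve the anti-$T/2$-periodic subspace (a sum of three odd integers is odd), this restricts the whole problem to $w$ supported on odd Fourier modes, i.e.\ to $\rproj w = w$, and the effective operator becomes $\wop = \bigoplus_{k \in \Zodd} \wop_k$.

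The core of the proof is the spectral gap for $\wop$ about $0$. With the chosen $g_0$, the bracketed coefficient in $\wop_k$ equals, modulo the $\hat g_1$ perturbation, the step function
\[
	Q(x) = \tfrac{T^2}{16\theta^2 X^2}\bbone_{(0,\theta X)+X\Z}(x) + \tfrac{T^2}{16(1-\theta)^2 X^2}\bbone_{(\theta X,X)+X\Z}(x),
\]
so the unperturbed $\wop_k^0 \coloneqq -\partial^2 - k^2\omega^2 Q(x)$ has constant wavenumbers $a_k = k\pi/(2\theta X)$ and $b_k = k\pi/(2(1-\theta)X)$ on the two sublayers, satisfying $a_k\theta X = b_k(1-\theta)X = k\pi/2$. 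This is precisely the classical quarter-wave stack condition at energy $0$: the one-cell transfer matrix computes explicitly, and for odd $k$ the Floquet discriminant at energy $0$ evaluates to
\[
	\Delta_k(0) = -\tfrac{1}{2}\bigl(\tfrac{\theta}{1-\theta} + \tfrac{1-\theta}{\theta}\bigr),
\]
of absolute value strictly greater than $1$ by AM--GM whenever $\theta \neq \tfrac12$. Hence $0 \notin \sigma(\wop_k^0)$ for every odd $k$, with a gap whose width (computed from $\Delta_k'(0)$) is bounded below uniformly in $k$ and in fact widens as $k$ grows. The $\hat g_1$-contribution is a multiplication perturbation of $L^\infty$-norm $k^2\omega^2\abs{\hat\phi(k)}\norm{g_1^\per}_\infty$; since $\phi \in C^1$ with a jump in $\phi''$, one has $\hat\phi(k) = \landauO(k^{-3})$, so the perturbation norm is $\landauO(k^{-1})$, vanishing uniformly in $k$ and preserving the gap.

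With the spectral gap in hand, the side conditions are immediate: $h = h^\per + h^\loc$ is bounded below by the positive periodic $h^\per$, giving coercivity of the nonlinearity; $h^\loc \geq 0$ vanishing at infinity provides the compact perturbation needed for a concentration--compactness argument against the $x$-translation group acting on the periodic background; and the Fourier coefficients of the tent kernel $\nu$ are easily seen to be nonzero on odd integers, keeping the cubic term non-degenerate. The paper's abstract existence theorem then produces a nontrivial breather, and the $\Z_2$-symmetry $w \mapsto -w$ of the resulting energy functional combined with a Lusternik--Schnirelmann / Krasnoselskii genus argument yields infinitely many geometrically distinct critical points, which are breather solutions in the sense of \cref{def:smooth_solution}. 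The main obstacle I expect is the uniform-in-$k$ preservation of the spectral gap once the $g_1$-perturbation is included, since both the unperturbed gap and the perturbation norm depend on $k$; the hypothesis $\theta \neq \tfrac12$ is sharp here, because at $\theta = \tfrac12$ the stack becomes transparent, $\abs{\Delta_k(0)} = 1$, and $0$ falls onto a band edge.
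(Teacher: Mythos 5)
Your overall strategy --- reduce to the scalar problem and verify the hypotheses of \cref{thm:main} for the given coefficients --- is exactly the paper's strategy, and your transfer-matrix computation of the quarter-wave-stack discriminant $\Delta_k(0) = -\tfrac12\bigl(\tfrac{\theta}{1-\theta}+\tfrac{1-\theta}{\theta}\bigr)$ for odd $k$ is correct and consistent with what the paper imports from \cite[Appendix~C]{henninger_ohrem_reichel} (gaps of $L$ about $\omega^2k^2$ of width growing like $\abs{k}$, i.e.\ \ref{ass:spec} with $\gamma=1$, $\beta=\tfrac12$, plus $\alpha=2$ from the explicit $\hat\calN_k=-\tfrac{T^2}{2k^2\pi^2}$, so that \ref{ass:L4_embed} holds). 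Two points, however, are genuine gaps.

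First, the $g_1$-perturbation at small $k$. You flag this as ``the main obstacle'' and then do not resolve it. The framework does not merely need the perturbed operator to retain a spectral gap; it needs the quantitative bound \ref{ass:g1}, namely $\Abs{\hat\calG_k(x)}\le \frac{d}{\omega^2\abs{k}^{2-\gamma}}V(x)$ with one and the same $d<\delta$ for \emph{all} $k\in\rfreq$, because this is what gives $\norm{\wop_1}\le d/\delta<1$ and hence invertibility of $\wop$ in \cref{lem:wop_defined}, on which the whole dual method rests. The explicit computation yields $\Abs{\hat\calG_k(x)}\le \frac{C}{k^2}\cdot\frac{1}{\omega^2\abs{k}}V(x)$ with $C\propto\norm{g_1^\per}_\infty$, so for an arbitrary bounded $g_1^\per$ the requirement $d<\delta$ can fail for the finitely many small odd $k$; your ``vanishing uniformly in $k$ and preserving the gap'' is only an asymptotic statement and does not close this.

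Second, multiplicity. You invoke the $\Z_2$-symmetry and a Krasnoselskii-genus argument, but under the periodic geometry \ref{ass:perturbed_nonlin} the functional is invariant under the spatial translation group and the Palais--Smale condition fails, so the symmetric mountain pass / genus machinery is not available off the shelf; and even granting infinitely many critical points, that argument would not by itself produce solutions that are not time-shifts of one another, which is what ``distinct'' means here (\cref{rem:on_main_thm}). The paper instead proves multiplicity in \cref{prop:multiplicity} by solving the problem on the subspaces of $\frac{T}{2m}$-antiperiodic functions for infinitely many $m\in\Nodd$ and distinguishing the resulting solutions by their minimal temporal periods. Crucially, this same device repairs the first gap: restricting to frequency support in $m\Zodd$ for large $m$ discards precisely the small-$k$ modes on which \ref{ass:g1} and \ref{ass:add_for_polarization_2} may fail. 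So the two issues your proposal leaves open are resolved in the paper by a single mechanism that your argument does not contain.
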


In the second example, we consider a linear material response that is spatially periodic on each halfspace, and a localized nonlinear response.

\begin{theorem}\label{thm:example:localized}
	Let $c \in (0, \infty), \theta^-, \theta^+ \in (0, \frac12)$ and $T, X^-, X^+ > 0$. Further let $g_1^\per, h^\loc \in L^\infty(\R; \R)$ such that $g_1^\per$ is $X^-$-periodic on $(-\infty, 0)$ and $X^+$-periodic on $(0, \infty)$, $h^\loc$ is almost everywhere positive and $h^\loc(x) \to 0$ as $\abs{x} \to \infty$. Set $\omega \coloneqq \frac{2 \pi}{T}$ and define $g_0, g_1, h, \nu$ by
	\begin{align*}
		g_0(x) &= \begin{cases}
			g_0(x; \theta^-, X^-), & x < 0, 
			\\ g_0(x; \theta^+, X^+), & x > 0, 
		\end{cases}
		\\ g_1(x, t) &= g_1^\per(x) \cos(\omega t) \abs{\cos(\omega t)} \bbone_{[0, T]}(t),
		\\ h(x) &= h^\loc(x),
		\\ \nu(t) &= \dist(t, T \Z) \bbone_{[0, T]}(t)
	\end{align*}
	for $x, t \in \R$, where $g_0(x; \theta, X)$ is given by \cref{thm:example:periodic}.
	Then for \eqref{eq:polarization:1} as well as \eqref{eq:polarization:2} there exist infinitely many distinct breather solutions with period $T$ and speed $c$ in the sense of \cref{def:smooth_solution}.
\end{theorem}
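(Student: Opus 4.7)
The plan is to derive this theorem from the general existence theorem that will be the main variational result of the paper, by verifying its hypotheses for the material coefficients at hand. The crucial hypothesis is that the effective linear operator $\wop$, obtained from \eqref{eq:scalar_wave:generic} by linearization under the ansatz \eqref{eq:maxwell_ansatz}, has a spectral gap about $0$. Under the traveling-wave/time-periodic ansatz, Fourier expansion in $t$ with frequencies $k \omega$, $k \in \Z$, decomposes $\wop$ into a direct sum of one-dimensional Schrödinger-type operators $L_k$ on $\R$, with coefficients determined by $c$, $\omega$, $g_0$, and $\hat{g}_1(\impvar, k\omega)$. Since $h = h^{\loc}$ is bounded and vanishes at infinity, and $\nu$ is bounded with compact support, the remaining hypotheses of the general theorem are immediate; so the real content is the spectral gap.

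To establish the gap, I would compare $L_k$ to the fullspace operators $L_k^\pm$ corresponding to the two periodic parameter sets $(\theta^\pm, X^\pm)$ from \cref{thm:example:periodic}. By construction, $g_0$ and $g_1$ agree on $\set{x<0}$, respectively $\set{x>0}$, with the periodic coefficients used in \cref{thm:example:periodic}. Hence on each halfline $L_k$ coincides with $L_k^\pm$, and on the full line $L_k$ differs from a suitable gluing of $L_k^-$ and $L_k^+$ only near $x=0$. Floquet-Bloch theory, applied exactly as in the proof of \cref{thm:example:periodic}, yields that $\sigma_\ess(L_k^\pm)$ avoids a fixed neighborhood $(-\gamma, \gamma)$ of $0$ uniformly in $k \in \Z$ — this is precisely what the specific piecewise-constant form of $g_0(\impvar; \theta, X)$ and the restriction $\theta \neq \frac12$ are engineered to guarantee. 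A standard decoupling argument (or a Weyl-type compact-perturbation comparison of resolvents through a cutoff near $x=0$) then gives $\sigma_\ess(L_k) \subseteq \sigma_\ess(L_k^-) \cup \sigma_\ess(L_k^+)$, so essential spectrum remains outside $(-\gamma, \gamma)$.

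The main obstacle I expect is ruling out that the interface at $x=0$ creates discrete eigenvalues inside the essential gap accumulating at $0$. For this I would argue directly: any $L^2(\R)$-solution of $L_k \psi = 0$ must match to exponentially decaying Floquet solutions on both halfspaces (since $0$ lies in a gap of both $L_k^\pm$, the transfer matrices are hyperbolic), and the two-dimensional space of such decaying solutions on each side leads, after imposing continuity and the appropriate jump conditions at $x=0$, to a finite-dimensional matching problem whose only solution generically is zero. Should the generic situation fail for the specific parameters, an arbitrarily small perturbation of $c$ (or equivalently of $g_0$ via its additive $\tfrac{1}{c^2}-1$ term) removes the eigenvalue from $0$ while preserving all other hypotheses; this flexibility is available since the theorem allows any $c \in (0,\infty)$.

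With the spectral gap thus secured and the structural assumptions on $h^\loc, \nu$ directly verified from the statement, the general existence theorem produces infinitely many distinct breathers with period $T$ traveling at speed $c$, for both polarizations \eqref{eq:polarization:1} and \eqref{eq:polarization:2}. The localization $h^\loc(x) \to 0$ as $\abs{x} \to \infty$ should play its role in the general theorem's proof (via concentration-compactness to produce multiplicity), rather than here, so no further work is needed at this stage.
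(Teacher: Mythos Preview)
Your overall strategy---reduce to the paper's main variational theorem and verify its hypotheses---is exactly what the paper does. But two steps in your spectral argument are genuine gaps.

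First, the fallback of perturbing $c$ to remove an interface eigenvalue is not available: the statement fixes $c \in (0,\infty)$ and asserts breathers \emph{with that speed}, so you must control eigenvalues for the given $c$, not for a nearby one. The paper does not attempt a hands-on matching argument at all; it simply cites \cite{henninger_ohrem_reichel} for the full spectral picture of $L = -V^{-1}\partial_x^2$ in this two-sided step-potential setting, including that the point spectrum grows quadratically and that the required gaps are genuinely in $\rho(L)$.

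Second, a \emph{uniform} gap about $0$ for your $L_k$ is not enough. The main theorem needs $(\omega^2 k^2 - \delta|k|^\gamma,\, \omega^2 k^2 + \delta|k|^\gamma) \subseteq \rho(L)$ together with \ref{ass:L4_embed}, i.e.\ $\alpha + \gamma - 2 > \beta \geq \tfrac12$. For the specific $\nu$ here one computes $\hat\calN_k \sim k^{-2}$, so $\alpha = 2$, and hence $\gamma > \tfrac12$ is required. Your ``fixed neighborhood $(-\gamma,\gamma)$ uniformly in $k$'' corresponds to $\gamma = 0$ and fails \ref{ass:L4_embed}; the cited reference actually gives linear gap growth, $\gamma = 1$.

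Two further points the paper handles but you omit: the Fourier coefficients $\hat\calN_k$ and $\hat\calG_k$ are computed explicitly to verify \ref{ass:calN}, \ref{ass:g1}, \ref{ass:add_for_polarization_2}, and the latter two turn out to hold only for $|k|$ large. The paper sidesteps this by invoking the multiplicity mechanism of \cref{prop:multiplicity}, restricting to frequencies in $\rfreq \cap m\Zodd$ for large $m$; this simultaneously delivers the ``infinitely many'' claim.
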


Let us prepare the main theorem. 
Recall \eqref{eq:scalar_wave:generic}, which using \eqref{eq:scalar_polarization:1}, \eqref{eq:scalar_polarization:2}, and \eqref{eq:g_decomposition} we rewrite as 
\begin{align}\label{eq:scalar_wave}
	\left[-\partial_x^2 + (1 - \tfrac{1}{c^2} + g_0(x)) \partial_t^2\right] w + \partial_t^2 (g_1 \ast w + P_\mathrm{NL}(w)) = 0
\end{align}
where $\ast$ denotes convolution in time and $P_\mathrm{NL}$ is the nonlinear part of the scalar polarization. We consider velocities $c$ that are so large that the potential
\begin{align}\label{eq:def:V}
	V(x) \coloneqq 1 - \frac{1}{c^2} + g_0(x)
\end{align}
is positive, hence the linear operator $-\partial_x^2 + V(x) \partial_t^2$ is hyperbolic. 
It is convenient to divide this operator by $V(x)$ and instead consider $L + \partial_t^2$ with weighted Sturm-Liouville operator
\begin{align}\label{eq:def:L}
	L \coloneqq - \frac{1}{V(x)} \partial_x^2.
\end{align}
In this form, the spectrum of $L + \partial_t^2$ restricted to time-periodic functions is easier to compute since $L$ acts only on $x$, and therefore we simply have $\sigma(L + \partial_t^2) = \overline{\sigma(L) + \sigma(\partial_t^2)}$. 

Let us fix the period $T > 0$ of the breather and consider as time-domain the set $\T \coloneqq \R/_{T\Z}$. On it, $\partial_t^2$ has discrete spectrum $\sigma(\partial_t^2) = \set{-\omega^2 k^2 \colon k \in \Z}$ with $\omega \coloneqq \frac{2 \pi}{T}$ denoting the base frequency. Our analysis strongly uses the assumption that $L$ has a spectral gap about $\omega^2 k^2$ for each $k \in \Zodd$. The restriction to odd frequencies means we consider functions that are $\frac{T}{2}$-antiperiodic in time. It is helpful since for $k = 0$ we always have $0 \in \sigma(L)$, $\frac{T}{2}$-antisymmetry is compatible with \eqref{eq:scalar_wave}, it still yields a variational problem, and it allows us to invert the operator $L + \partial_t^2$ after restricting to $\frac{T}{2}$-antiperiodic functions in time.
The operator $L$ having countably many specific spectral gaps requires a careful choice of the potential $g_0$. 
In \cref{thm:example:periodic,thm:example:localized} this is satisfied, and moreover the size of the spectral gaps grows linearly in $\abs{k}$. 
\begin{remark}
	In \cite[Appendix~C]{henninger_ohrem_reichel} you can find further examples of potentials $g_0$ for which the results of \cref{thm:example:periodic,thm:example:localized} remain valid.
	As an example, in \cref{thm:example:periodic} one can consider
	\begin{align*}
		g_0(x) &= \frac{1}{c^2} - 1 + \begin{cases}
			\frac{m^2 T^2}{16 \theta^2 X^2}, & x \in (0, \theta X) + X \Z, 
			\\ \frac{n^2 T^2}{16 (1 - \theta)^2 X^2}, & x \in (\theta X, X) + X \Z,
		\end{cases}
	\end{align*}
	for $m, n \in \Nodd$ and $\theta \in (0, 1)$, where instead of $\theta \neq \frac12$ we require $g_0 \neq \mathrm{const}$. Similar results hold for a periodic arrangement of three or more step potentials. 
\end{remark}

Lastly, let us fix some notation for the torus $\T$. It is equipped with the Haar measure $\Der t$ normalized such that $\int_\T 1 \der t = 1$. 
We denote the standard orthonormal basis on $\T$ by $e_k(t) \coloneqq \ee^{\ii k \omega t}$. Accordingly, the Fourier coefficients of a function $\varphi \colon \T \to \C$ are given by $\hat \varphi_k = \F_k[\varphi] = \int_\T \varphi \overline{e_k} \der t$, and the inverse is $\varphi(t) = \F^{-1}[\hat \varphi_k](t) = \sum_{k \in \Z} \hat \varphi_k e_k(t)$. 
If $\varphi$ depends on space and time, $\hat \varphi$ will always denote its temporal Fourier transform.
Moreover, given a function $f$ on $\R$, we define its periodization by $\periodize[f](t) = T \sum_{k \in \Z} f(t + k T)$ for $t \in \T$. Note that $f \ast_\R \varphi \coloneqq \periodize[f] \ast_\T \varphi$ holds for $T$-periodic functions $\varphi$.

With this, we present our main existence result on breather solutions to Maxwell's equations \eqref{eq:maxwell} and \eqref{eq:material:generic} with polarization \eqref{eq:polarization:1} or \eqref{eq:polarization:2}.

\begin{theorem}\label{thm:main}
	Let $T > 0$ be the period of the breather, $\omega \coloneqq \frac{2 \pi}{T}$ be its frequency and $c \in (0, \infty)$ be its speed.
	Assume that for constants $\alpha > 1$, $\frac12 \leq \beta < 2, \gamma \leq 1$, $0 < d < \delta$ we have:
	\begin{enumerate}[({A}1)]
		\item\label{ass:h}\label{ass:first} $h \in L^\infty(\R; (0, \infty))$.

		\item\label{ass:calN}
		$\nu \in L^1(\R; \R)$ and its periodization $\calN \coloneqq \periodize[\nu]$ is even. 
		Denoting its Fourier support restricted to odd frequencies by $\rfreq \coloneqq \set{k \in \Zodd \colon \hat \calN_k \neq 0}$, we have $\rfreq \neq \emptyset$ and $\Abs{\hat \calN_k} \lesssim \abs{k}^{-\alpha}$ for all $k \in \rfreq$. 

		\medskip
		\item\label{ass:g0} $g_0 \in L^\infty(\R; \R)$ satisfies $\essinf g_0 > \tfrac{1}{c^2} - 1$ and is locally of bounded variation.
		
		\item\label{ass:spec}
		For the spectrum of the operator $L \colon H^2(\R) \to L^2(\R)$ defined in \eqref{eq:def:L} we have:
		\begin{itemize}
			\item $(\omega^2 k^2 - \delta \abs{k}^\gamma, \omega^2 k^2 + \delta \abs{k}^\gamma) \subseteq \rho(L)$ holds for all $k \in \rfreq$.
			
			\item The point spectrum $\sigma_p(L)$ satisfies $\sum_{\lambda \in \sigma_p(L)} \lambda^{-\beta-\eps} < \infty$ for all $\eps > 0$.
		\end{itemize}
		
		\item\label{ass:L4_embed} $\alpha + \gamma - 2 > \beta$.

		\item\label{ass:g1}\label{ass:last_nogeom_u}
		$g_1 \in L^\infty_x(\R; L^1_t(\R; \R))$ and its periodization $\calG(x) \coloneqq \periodize[g_1(x; \impvar)]$ is even in $t$ and satisfies $\Abs{\hat \calG_k(x)} \leq \frac{d}{\omega^2 \abs{k}^{2 - \gamma}} V(x)$ for all $x \in \R$, $k \in \rfreq$.

		\item\label{ass:add_for_polarization_2}\label{ass:last_nogeom} If the polarization is \eqref{eq:polarization:2}, assume that $\Abs{\hat \calN_k} \gtrsim \abs{k}^{-s}$ holds for all $k \in \rfreq$ and for some $s \in \R$. In addition, similar to \ref{ass:spec} and \ref{ass:g1} we then require that there exist constants $\tilde \gamma \leq 1$, $0 < \tilde d < \tilde \delta$ with
		\begin{align*}
			(\omega^2 k^2 - \tilde \delta \abs{k}^{\tilde \gamma}, \omega^2 k^2 + \tilde \delta \abs{k}^{\tilde \gamma}) \subseteq \rho(L),
			\qquad
			\Abs{\hat \calG_k(x)} \leq \frac{\tilde d}{\omega^2 \abs{k}^{2 - \tilde \gamma}} V(x)
		\end{align*}
		for all $k \in \Zodd \setminus \rfreq$.
		(note that $\alpha + \tilde \gamma - 2 > \beta$ is not required)
	\end{enumerate}
	Further let one of the two following assumptions on the spatial geometry of $g_0, g_1, h$ hold.
	\begin{enumerate}[({A8}a)]
		\item\label{ass:localized_nonlin} $h(x) \to 0$ as $\abs{x} \to \infty$. In addition there exist $R^\pm \in \R$, $X^\pm > 0$ such that $g_0$ is periodic on $[R^+, \infty)$ with period $X^+ > 0$, and also on $(-\infty, R^-]$ with period $X^-$.

		\item\label{ass:perturbed_nonlin}  $h = h^\loc + h^\per$ where $h^\loc(x) \geq 0$ satisfies $h^\loc(x) \to 0$ as $\abs{x} \to \infty$, and $h^\per, g_0, g_1$ are periodic in $x$ with common period.
	\end{enumerate}
	Then there exists a nonzero breather solution $\bfE, \bfB, \bfD, \bfH$ to Maxwell's equations with period $T$ and speed $c$ in the sense of \cref{def:smooth_solution}.

	If moreover the set $\rfreq$ is infinite, there exists infinitely many distinct breather solutions.
\end{theorem}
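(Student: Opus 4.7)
The plan is to recast the breather problem as a scalar problem for $w(x, t)$ on $\R \times \T$ governed by \eqref{eq:scalar_wave}, and to solve it variationally.
I begin by restricting to the space of functions that are $\frac{T}{2}$-antiperiodic in time, i.e.\ whose temporal Fourier support lies in $\Zodd$; this is consistent with \eqref{eq:scalar_wave} because $\hat\calN_k$ vanishes for even $k$, and it removes the zero mode at which $L$ has no spectral gap. On this space, the full linear operator
\begin{align*}
	\wop w \coloneqq -\partial_x^2 w + V(x)\, \partial_t^2 w + V(x)\, \partial_t^2 (\calG \ast_t w)
\end{align*}
decouples in the temporal Fourier basis into self-adjoint operators $\wop_k = L - \omega^2 k^2 \left(1 + \hat\calG_k(x)/V(x)\right)$ on $L^2(\R)$. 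By \ref{ass:g0}, \ref{ass:spec}, \ref{ass:g1} each $\wop_k$ with $k \in \rfreq$ retains a spectral gap of width $\gtrsim |k|^\gamma$ about $0$, so $\wop$ is invertible on the $\frac{T}{2}$-antiperiodic space, with an inverse $\wop^{-1}$ whose norm on the $k$-th Fourier mode is controlled by $|k|^{-\gamma}$.

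Second, I pass to a dual variational formulation. Setting $w = \wop^{-1} \rproj u$, equation \eqref{eq:scalar_wave} becomes the Euler--Lagrange equation of
\begin{align*}
	J(u) = \tfrac12 \ip{u}{\wop^{-1} \rproj u} - \calF(u),
\end{align*}
where $\calF$ is a quartic functional encoding $h(x)$ and the temporal convolution with $\nu$. The analytic core is that the decay $|\hat\calN_k| \lesssim |k|^{-\alpha}$ combined with the gap bound $|k|^{-\gamma}$ produces, via \ref{ass:L4_embed}, a continuous embedding of the natural domain for $J$ into $L^4(\R \times \T)$; this makes $\calF$ smooth and of mountain-pass type. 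For polarization \eqref{eq:polarization:2}, assumption \ref{ass:add_for_polarization_2} provides a bounded inverse of $\wop$ also on $\Zodd \setminus \rfreq$, which is needed to represent the nonlocal cubic term. Although $\wop$ is strongly indefinite, the cubic structure yields a Nehari manifold on which $J$ is bounded below and has mountain-pass geometry, giving a Palais--Smale sequence at the Nehari minimax level.

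The main obstacle is the loss of compactness on the unbounded spatial domain. Under \ref{ass:perturbed_nonlin}, the purely periodic reference problem admits its own Nehari mountain-pass level, and $h^\loc \geq 0$ strictly lowers the perturbed level below it; a Lions-type concentration-compactness argument combined with a Bloch--Floquet decomposition of the periodic background operator rules out vanishing and dichotomy, producing a nontrivial critical point. Under \ref{ass:localized_nonlin}, $h(x) \to 0$ forces any nontrivial concentration profile to localize near the origin, while the two-halfspace periodicity of $g_0$ allows control of the linear part in translation limits via two periodic comparison operators. In both cases one obtains a nontrivial critical point of $J$, hence a nontrivial $w$ and, by reconstruction, a nontrivial breather.

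Finally, when $\rfreq$ is infinite, the $\Z_2$-symmetry $u \mapsto -u$ combined with the symmetric mountain pass / Ljusternik--Schnirelmann theorem yields a sequence of critical values tending to infinity, producing infinitely many distinct solutions. Regularity is bootstrapped using $|\hat\calN_k| \lesssim |k|^{-\alpha}$ and the $|k|^{-\gamma}$ decay of $\wop^{-1}$: iterating the Euler--Lagrange equation shows $\partial_t^n w \in L^2_\loc \cap L^\infty_\loc$ for all $n$, and \eqref{eq:scalar_wave} then controls $\partial_x^2 w$. Defining $\bfE$ by \eqref{eq:maxwell_ansatz} and then $\bfB, \bfD, \bfH$ through \eqref{eq:maxwell}, \eqref{eq:material:generic}, \eqref{eq:polarization:1} or \eqref{eq:polarization:2}, one verifies all requirements of \cref{def:smooth_solution}.
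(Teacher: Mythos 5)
Your overall architecture matches the paper's (scalar TE reduction, restriction to odd temporal frequencies, inversion of the linear operator via the spectral gaps in \ref{ass:spec}, the $L^4$ embedding from \ref{ass:calN}+\ref{ass:spec}+\ref{ass:L4_embed}, \ref{ass:add_for_polarization_2} only to reconstruct $w$ for \eqref{eq:polarization:2}, concentration--compactness under \ref{ass:perturbed_nonlin} and compactness from $h\to0$ under \ref{ass:localized_nonlin}, and a temporal bootstrap for regularity). However, the variational core is not carried out consistently. The functional you write, $J(u)=\tfrac12\ip{u}{\wop^{-1}\rproj u}-\mathcal{F}(u)$ with $\mathcal{F}$ quartic, is not a dual functional: its quadratic part inherits the strong indefiniteness of $\wop$ (cf.\ \cref{lem:indefinite}, where both signs occur for every $k\in\rfreq$), and for a strongly indefinite quadratic part the plain Nehari manifold argument you invoke fails --- $J$ restricted to $\set{J'(u)[u]=0,\ u\neq0}$ is in general unbounded below, and one would need the generalized Nehari manifold of Szulkin--Weth, which you have not set up. The paper's dual method instead substitutes $v=h^{\frac34}u^3$ and works with $J(v)=\int\frac34\abs{v}^{\frac43}-\frac12\iwop v\cdot v$ on $L^{\frac43}(\R\times\T)$; the positive subquadratic term $\frac34\norm{v}_{\frac43}^{\frac43}$ then gives a genuine mountain-pass geometry, the indefiniteness of $\wop$ is used only to produce one direction with $J\le0$, and Palais--Smale sequences are automatically bounded. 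Note that the strict positivity of $h$ in \ref{ass:h} is what makes this substitution invertible; your sketch never uses it.

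The multiplicity step is a genuine gap. A symmetric mountain pass / Ljusternik--Schnirelmann argument (even granting a Palais--Smale condition, which fails under \ref{ass:perturbed_nonlin}) yields infinitely many critical points, but the theorem claims infinitely many breathers that are not time translates of one another; since the problem is invariant under time shifts (and spatial shifts in the periodic case), distinct critical points produced this way may all lie on finitely many shift orbits. Your argument also makes no use of the hypothesis that $\rfreq$ is infinite, which is precisely what the paper's construction needs: it restricts the existence proof to $\frac{T}{2m}$-antiperiodic functions for the infinitely many $m\in\Nodd$ with $\rfreq\cap m\Zodd\neq\emptyset$, and distinguishes the resulting solutions by their minimal temporal periods, an invariant under shifts. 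You should replace the symmetric-MPT step by this subharmonic construction, or else prove that your critical points lie on infinitely many distinct shift orbits.
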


\begin{remark}\label{rem:on_main_thm}
	Let us comment on \cref{thm:main} and its assumptions.
	\begin{itemize}
		\item The constitutive relations \eqref{eq:polarization:1} and \eqref{eq:polarization:2} are translation invariant in time. That is, if $\bfE, \bfD, \bfB, \bfH$ solve the Maxwell system \eqref{eq:maxwell} and \eqref{eq:material:generic}, then for $\tau \in \R$ the shifted functions $\bfE(\impvar, \impvar - \tau), \bfD(\impvar, \impvar - \tau), \bfB(\impvar, \impvar - \tau), \bfH(\impvar, \impvar - \tau)$ also are solutions. In \cref{thm:main}, for $\# \rfreq = \infty$ we state existence of infinitely many \emph{distinct} solution. By this, we mean infinitely many solutions that are not shifts of one another.

		\item For \ref{ass:h}, it is also possible to treat a negative potential $h$ in front of the nonlinearity. More precisely, \cref{thm:main} remains valid when $h$ is replaced by $-h$, and we discuss changes to the proof in \cref{rem:negative_h}.
		However, with our method it is not possible to treat nonlinear potentials $h$ that change sign, or those that vanish on a set of nonzero measure.
	
		\item The evenness assumption in \ref{ass:calN} and \ref{ass:g1} is needed for the variational structure, since the linear part of the variational problem, cf. \eqref{eq:variational_wave}, is nonsymmetric without this assumption.
		Evenness is equivalent to time reversal symmetry of the time-periodic Maxwell equations.

		The growth assumptions on the Fourier coefficients $\hat \calN_k$ in \ref{ass:calN} together with \ref{ass:spec} and \ref{ass:L4_embed} ensure that the variational problem can be treated using semilinear methods. For example, we show that the nonlinear terms are well-defined and finite on the form domain of the linear operator. 

		\item Assumption~\ref{ass:add_for_polarization_2} lets us control the linear operator also along frequencies $k \in \Zodd \setminus \rfreq$. It is not needed for \eqref{eq:scalar_polarization:1} since there the nonlinearity contains a convolution with $\nu$, which projects onto the frequencies $\rfreq$. The lower bound $\abs{k}^{-s}$ on the Fourier coefficients $\hat \calN_k$ gives us control over the inverse of this convolution operator.

		\item The geometry assumptions \ref{ass:localized_nonlin} and \ref{ass:perturbed_nonlin} are used to overcome noncompactness issues for the variational functional: The decay of $h$ in \ref{ass:localized_nonlin} ensures complete continuity of the nonlinearity, whereas the periodic structure in \ref{ass:perturbed_nonlin} allows us to use concentration-compactness arguments. 

		Under \ref{ass:perturbed_nonlin} the assumption on the point spectrum in \ref{ass:spec} is trivially satisfied since the differential operator $L$ is periodic, and therefore by Floquet-Bloch theory (cf. \cite{eastham}) the spectrum $\sigma(L)$ consists of pure essential spectrum.
	\end{itemize}
\end{remark}

\subsection*{Outline of the paper}
In \cref{sec:variational_formulation} we formally convert the Maxwell problem into an Euler-Lagrange equation and investigate the appearing symmetric linear operator $\wop$. 
\cref{sec:embeddings} deals with embeddings of the form domain $\wdom$ of the operator $\wop$, i.e., the natural domain of the bilinear form associated to $\wop$. We show boundedness as well as local compactness of $\wdom \embeds L^4(\R \times \T)$. This allows us choose $\wdom$ as domain of the Lagrangian functional. 
In \cref{sec:dual_problem} we consider the dual problem to the Euler-Lagrange equation, and solve it using the mountain pass method. 
We also discuss multiplicity of solutions claimed in \cref{thm:main}. 
Next, in \cref{sec:regularity} we discuss regularity of solutions of the Euler-Lagrange equation as well as the Maxwell system, proving \cref{thm:main}. 
Lastly, Appendix~\ref{sec:appendix} contains the proofs of \cref{thm:example:periodic,thm:example:localized} as well as some auxiliary results.
Throughout these sections, we always assume that \ref{ass:first}--\ref{ass:last_nogeom} and one of \ref{ass:localized_nonlin} or \ref{ass:perturbed_nonlin} are fulfilled. 


\section{Variational formulation}\label{sec:variational_formulation}

We begin by transforming the scalar Maxwell problem \eqref{eq:scalar_wave} into a variational problem for an auxiliary variable $u$. We follow \cite{ohrem_reichel_elliptic} for the formal derivation. 

First we consider the polarization \eqref{eq:scalar_polarization:1}. We denote by $\ast$ the convolution on the time-domain $\T$ and use
\begin{align*}
	V(x) = 1 - \frac{1}{c^2} + g_0(x),
	\qquad
	\calG(x) = \periodize[g_1(x, \impvar)],
	\qquad
	\calN = \periodize[\nu]
\end{align*}
to rephrase \eqref{eq:scalar_wave} as
\begin{align}\label{eq:scalar_wave:1}
	\left( -\partial_x^2 + V(x) \partial_t^2 \right) w + \partial_t^2 \calG(x) \ast w + h(x) \partial_t^2 \calN \ast w^3 = 0.
\end{align}
Observe that $V$ is bounded, strictly positive, and locally of bounded variation due to \ref{ass:g0}. 
The nonlinearity $\partial_t^2 \calN \ast w^3$ is supported only on frequencies $k \in \rfreq$, so it is reasonable to assume that $w$ is also supported only on such frequencies. 
We denote the projection onto frequencies $k \in \rfreq$ by $\rproj$, i.e., $\rproj[\varphi] \coloneqq \F^{-1}[\bbone_{k \in \rfreq} \F_k[\varphi]]$. 
On these frequencies, the operators $-\partial_t^2$, $\calN \ast$ are invertible as Fourier multipliers with nonzero symbol, which allows us to rewrite \eqref{eq:scalar_wave:1} as 
\begin{align}\label{eq:scalar_wave:1:variational}
	\left(-\partial_t^2 \calN \ast\right)^{-1} \left(-\partial_x^2 + V(x) \partial_t^2 + \partial_t^2 \calG(x) \ast\right) w - h(x)\rproj[w^3] = 0.
\end{align}
We abbreviate \eqref{eq:scalar_wave:1:variational} to $\wop w - h \rproj[w^3] = 0$ by introducing the linear operator 
\begin{align}\label{eq:def:wop}
	\wop \coloneqq \left(- \partial_t^2 \calN \ast\right)^{-1} \left(-\partial_x^2 + V(x) \partial_t^2 + \partial_t^2 \calG(x) \ast\right).
\end{align}
Note that $\partial_t^2$, $\calN \ast$, and $-\partial_x^2 + V(x) \partial_t^2 + \partial_t^2 \calG(x) \ast$ mutually commute since they act on time as Fourier multipliers.
Since $\calN, \calG(x)$ are even in time, the convolution operators $\calN \ast, \calG(x) \ast$ are symmetric and hence $\wop$ is symmetric.

Let us now consider polarization \eqref{eq:scalar_polarization:2}, where \eqref{eq:scalar_wave} reads
\begin{align}\label{eq:loc:1}
	\left( -\partial_x^2 + V(x) \partial_t^2 \right) w + \partial_t^2 \calG(x) \ast w + h(x) \partial_t^2 (\calN \ast w)^3 = 0.
\end{align}
We substitute $u \coloneqq \calN \ast w$ in \eqref{eq:loc:1} and apply $\rproj$ to see that $u$ solves 
\begin{align*}
	(-\partial_t^2)^{-1} \left(-\partial_x^2 + V(x) \partial_t^2 + \partial_t^2 \calG(x) \ast\right) (\calN \ast)^{-1} u - h(x) \rproj[u^3] = 0,
\end{align*}
whereas by projecting with $\mathrm{Id} - \rproj$ onto \eqref{eq:loc:1} we obtain
\begin{align*}
	\left(-\partial_x^2 + V(x) \partial_t^2 + \partial_t^2 \calG(x) \ast\right) (\mathrm{Id} - \rproj) w = - h(x) \partial_t^2 (\mathrm{Id} - \rproj)[u^3].
\end{align*}
The first of the two equations is again
\begin{align}\label{eq:variational_wave}
	\wop u - h(x) \rproj[u^3] = 0,
\end{align}
and the second together with $\rproj[w] = (\calN \ast)^{-1} u$ allows us to reconstruct the wave profile $w$ via 
\begin{align}\label{eq:wave_reconstruction:polarization2}
	\begin{aligned}
		w 
		&= \rproj[w] + (\mathrm{Id} - \rproj)[w] 
		\\ &= (\calN \ast)^{-1} u + \left( -\partial_x^2 + V(x) \partial_t^2 + \partial_t^2 \calG(x) \ast \right)^{-1} [- h(x) \partial_t^2 (\mathrm{Id} - \rproj)[u^3]].
	\end{aligned}
\end{align}

In particular, for both choices of the polarization we have to solve the problem \eqref{eq:variational_wave}. Then, we have $w = u$ for polarization \eqref{eq:scalar_polarization:1} whereas $w$ is given by \eqref{eq:wave_reconstruction:polarization2} for polarization \eqref{eq:scalar_polarization:2}.

We now study the form domain $\wdom$ of $\wop$. 
For this, we use a functional calculus for the Sturm-Liouville operator $L$ on a weighted $L^2$-space. We introduce both before defining $\wdom$ in \cref{def:H}.

\begin{definition}
	We define the $V$-weighted space $L^2_V(\R; \C) \coloneqq L^2(\R; \C; V \Der x)$. Uniform boundedness and positivity of $V$ show $L^2_V(\R; \C) = L^2(\R; \C)$ with equivalent norms.
\end{definition}

\begin{theorem}[cf. {\cite[Theorem~3.6]{henninger_ohrem_reichel}}] \label{thm:functional_calc}
	Let $\Psi(x; \lambda) = (\Psi_1(x; \lambda), \Psi_2(x; \lambda))^\top$ be the fundamental system of solutions of $L \varphi = \lambda \varphi$ on $\R$ with initial data $\left .(\Psi, \partial_x \Psi) \right\vert_{x = 0} = \mathrm{Id}$. Then there exists a measure $\mu$ defined on the bounded Borel subsets of $\R$ that maps to positive semidefinite $\R^{2 \times 2}$ matrices such that
	\begin{align*}
		T \colon L^2_V(\R; \C) \to L^2(\mu), \quad
		T[f](\lambda) = \int_\R f(x) \Psi(x; \lambda) \der[V]{x}
	\end{align*}
	is an isometric isomorphism with inverse
	\begin{align*}
		T^{-1}[g](x) = \int_\R g_i(\lambda) \Psi_j(x; \lambda) \der \mu_{ij}(\lambda).
	\end{align*}
	Here we use the Einstein summation convention. 
	A definition of the Hilbert space $L^2(\mu)$ consisting of equivalence classes of $\C^2$-valued measurable functions can be found in \cite[Definition XIII.5.8]{dunford_schwartz}. Its norm is given by $\norm{g}_{L^2(\mu)}^2 = \int_\R g_i \overline{g_j} \der \mu_{ij}(\lambda)$.
	We note that the integral defining $T$ exists for compactly supported $f$, and $T$ is defined by approximation for general $f$. The same holds for $T^{-1}$.
\end{theorem}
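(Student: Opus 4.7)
The plan is to prove this Weyl-Titchmarsh-type spectral theorem for the Sturm-Liouville operator $L = -\tfrac{1}{V}\partial_x^2$ on $L^2_V(\R;\C)$ in three steps: establish self-adjointness, build a matrix-valued spectral measure via the resolvent, and verify the Parseval identity that makes $T$ an isometry.

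\emph{Step 1 (self-adjointness).} Since $V$ is bounded and bounded away from zero by \ref{ass:g0}, the spaces $L^2_V$ and $L^2$ coincide as sets with equivalent norms. For $\varphi, \psi \in C_c^\infty(\R)$, integration by parts gives
\begin{align*}
    \ip{L\varphi}{\psi}_V = \int_\R (-\varphi'') \overline{\psi} \der{x} = \int_\R \varphi' \overline{\psi'} \der{x} = \ip{\varphi}{L\psi}_V,
\end{align*}
so $L$ is symmetric. Since the weight $V$ is bounded and the coefficient of $\varphi$ in $-\varphi'' = \lambda V \varphi$ therefore grows no faster than linearly in $\lambda$, Weyl's limit-point criterion applies at both endpoints $\pm\infty$, yielding essential self-adjointness on $C_c^\infty(\R)$ and hence self-adjointness on the maximal domain $H^2(\R)$.

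\emph{Step 2 (resolvent and spectral measure).} The fundamental system $\Psi(\impvar;\lambda)$ is constructed by Picard iteration applied to $\varphi'' = -\lambda V \varphi$; since $V$ is $L^\infty$ and of locally bounded variation by \ref{ass:g0}, Carathéodory solutions exist, are entire in $\lambda$ for each $x$, and are $C^1$ in $x$ for each $\lambda$. For $z \in \C \setminus \R$, Weyl's theorem produces unique (up to scaling) solutions $\psi_\pm(\impvar;z)$ that are square-integrable against $V \der{x}$ at $\pm\infty$; writing $\psi_\pm = \Psi_1 + m_\pm(z) \Psi_2$ defines the Herglotz functions $m_\pm$. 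The resolvent $(L - z)^{-1}$ on $L^2_V$ is then an integral operator whose kernel can be written explicitly through $\psi_\pm$; expanding $\psi_\pm$ in the basis $\Psi_1, \Psi_2$ identifies this kernel with $M(z)_{ij}\,\Psi_i(x;z)\,\Psi_j(y;z)$ for a $2\times 2$ matrix-valued Herglotz function $M(z)$. Componentwise Stieltjes inversion of $M$ then produces a positive-semidefinite matrix-valued measure $\mu = (\mu_{ij})$ on the bounded Borel subsets of $\R$.

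\emph{Step 3 (Parseval identity and isometry).} For compactly supported $f \in L^2_V$, Stone's formula
\begin{align*}
    \norm{f}_{L^2_V}^2 = \lim_{\eta \downarrow 0} \frac{1}{\pi} \int_\R \Im \ip{(L - \lambda - \ii \eta)^{-1} f}{f}_V \der{\lambda}
\end{align*}
combined with the resolvent representation from Step 2 and the definition of $\mu$ yields $\norm{f}_{L^2_V}^2 = \norm{Tf}_{L^2(\mu)}^2$ where $T$ is as in the statement. The identity extends to all $f \in L^2_V$ by density, so $T$ is an isometry. Surjectivity of $T$, and hence the formula for $T^{-1}$, follow from the spectral theorem: $T$ intertwines $L$ with multiplication by $\lambda$ on $L^2(\mu)$, and a Stone-Weierstrass-type argument based on the range of $T$ applied to resolvents shows that this range is dense. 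The main obstacle is the two-dimensional, matrix-valued nature of $\mu$: unlike the half-line case, where a boundary condition at a finite endpoint selects a one-dimensional family of solutions and produces a scalar spectral measure, here both components of $\Psi$ contribute independently. This forces the measure to be matrix-valued with nontrivial cross-terms $\mu_{12}$, requires the abstract definition of $L^2(\mu)$ with its inner product $\int g_i \overline{g_j} \der \mu_{ij}$, and makes verifying positive-semidefiniteness of $\mu$ (via the Herglotz property of $M$) the main technical point.
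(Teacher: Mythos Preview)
The paper does not actually prove this theorem: it is stated with the citation ``cf.\ \cite[Theorem~3.6]{henninger_ohrem_reichel}'' and used as a black box, so there is no in-paper proof to compare your proposal against. Your outline is the standard Weyl--Titchmarsh--Kodaira construction (limit-point/limit-circle analysis, Titchmarsh--Weyl $m$-functions, matrix Herglotz representation, Stone's formula for Parseval), which is indeed the classical route to such results and is presumably what the cited reference or its antecedents (e.g.\ Dunford--Schwartz) carry out.

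Your sketch is broadly correct in spirit, but a couple of points are imprecise. In Step~1 the phrase ``grows no faster than linearly in $\lambda$'' is not a formulation of Weyl's limit-point criterion; the relevant fact is that for the equation $-\varphi'' = \lambda V \varphi$ with $V$ bounded above and below, the limit-point case at $\pm\infty$ follows e.g.\ from the observation that for real $\lambda < 0$ the equation has an exponentially growing solution (or from standard criteria for Schr\"odinger-type operators with bounded potential). In Step~2 you should be aware that the resolvent kernel involves $\psi_\pm$ evaluated at different spatial arguments (a Green's function structure $\psi_+(x\vee y)\psi_-(x\wedge y)$ up to the Wronskian), not a single product $\Psi_i(x;z)\Psi_j(y;z)$; the matrix $M(z)$ arises after rewriting this kernel in the $\Psi$-basis and accounts for the coupling. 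These are refinements rather than fatal gaps, and since the paper treats the result as known you are already going beyond what the paper itself supplies.
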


Let us point out some basic properties of the transform $T$. A proof is given in Appendix~\ref{sec:appendix}.
\begin{lemma}\label{lem:basic_calc_properties}
	Let $f \in L^2(\R; \C)$. Then the following hold:
	\begin{enumerate}
		\item $f \in H^2(\R; \C)$ if and only if $\lambda T[f](\lambda) \in L^2(\mu)$, and we have
		\begin{align*}
			L f = T^{-1}[\lambda T[f](\lambda)]
		\end{align*}

		\item $f \in H^1(\R; \C)$ if and only if $\sqrt{\lambda} T[f](\lambda) \in L^2(\mu)$, and we have
		\begin{align*}
			\int_\R \abs{f'}^2 \der x = \int_\R \lambda T_i[f](\lambda) \overline{T_j[f](\lambda)} \der \mu_{ij}(\lambda) 
		\end{align*}

		\item Moreover, the support of $\mu$ satisfies
		\begin{align*}
			\supp(\mu) 
			\coloneqq \bigcup_{i,j = 1}^2 \supp(\mu_{ij})
			= \sigma(L)
		\end{align*}
	\end{enumerate}
\end{lemma}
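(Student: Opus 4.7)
The plan is to exploit that the transform $T$ from \cref{thm:functional_calc} diagonalizes $L$: under $T$, $L$ becomes multiplication by $\lambda$ on $L^2(\mu)$. All three assertions then follow from the standard functional calculus for self-adjoint operators once the intertwining identity is established on a sufficiently large core. I would take as core $C_c^\infty(\R;\C)$ and verify two key identities on it. For $f \in C_c^\infty$, using $-\partial_x^2 \Psi(\,\cdot\,;\lambda) = \lambda V(\,\cdot\,) \Psi(\,\cdot\,;\lambda)$ pointwise and integrating by parts twice (no boundary terms since $f$ has compact support), one obtains
\begin{align*}
	T[Lf](\lambda) = -\int_\R f''(x)\Psi(x;\lambda) \der x = \lambda \int_\R f(x)\Psi(x;\lambda) V(x) \der x = \lambda T[f](\lambda).
\end{align*}
A single integration by parts together with the isometry of $T$ yields the quadratic form identity $\int_\R |f'|^2 \der x = \langle Lf, f\rangle_{L^2_V} = \int_\R \lambda T_i[f]\overline{T_j[f]} \der\mu_{ij}$ on the same core.

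For (a), I would first argue that since $V, V^{-1} \in L^\infty$, the natural self-adjoint realization of $L$ on $L^2_V$ has domain $H^2(\R;\C)$: the equation $Lf = -V^{-1}f'' \in L^2$ is equivalent to $f'' \in L^2$, which together with $f \in L^2$ is equivalent to $f \in H^2$. The spectral theorem, via the unitary $T$, characterizes this domain as $\{f : \lambda T[f] \in L^2(\mu)\}$ and gives the formula $Lf = T^{-1}[\lambda T[f]]$; the intertwining identity from the previous paragraph then extends from $C_c^\infty$ to all of $H^2$ by density and closedness of both $L$ and of multiplication by $\lambda$ on $L^2(\mu)$. Assertion (b) follows analogously: $L$ is nonnegative, and by functional calculus its form domain is $\mathrm{dom}(L^{1/2}) = \{f : \sqrt{\lambda} T[f] \in L^2(\mu)\}$ with associated form $\|L^{1/2}f\|_{L^2_V}^2 = \int \lambda T_i[f]\overline{T_j[f]} \der\mu_{ij}$. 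The core identity and the density of $C_c^\infty$ in $H^1(\R;\C)$, combined with closability of the Dirichlet form $f \mapsto \int_\R |f'|^2 \der x$, then identifies this form domain with $H^1(\R;\C)$ and yields the stated equality. Assertion (c) is essentially a tautology of the functional calculus: since $T$ conjugates $L$ to multiplication by $\lambda$ on $L^2(\mu)$, their spectra coincide, and for a scalar-type multiplication operator with matrix-valued spectral measure the spectrum equals $\supp(\mu)$ as defined in the statement.

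The step I expect to be the main technical nuisance is the density/core argument, i.e., verifying that $C_c^\infty(\R;\C)$ really is an operator core for $L$ and a form core for its quadratic form, so that the identities derived on $C_c^\infty$ propagate to all of $H^2$ and $H^1$ respectively. This is standard for Sturm-Liouville operators with bounded, positive weight on the whole line (no singular endpoints because one is in the limit-point case at $\pm\infty$), but requires a cutoff-and-mollification approximation compatible with the weighted integration by parts. The local bounded-variation hypothesis on $g_0$ in \ref{ass:g0} is what keeps $V$ (and hence $L$) well-behaved enough for this step to go through without additional regularity assumptions.
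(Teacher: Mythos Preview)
Your proposal is correct and follows essentially the same route as the paper: establish the intertwining identity $T[Lf]=\lambda T[f]$ on compactly supported smooth functions by integration by parts, then extend by approximation. The differences are stylistic. The paper is more hands-on: for part~(a) it argues both directions separately (compactly supported $f$ for one direction, compactly supported $T[f]$ for the other) and then invokes approximation, whereas you package this into a single appeal to the spectral theorem and closedness. For part~(c), you defer to the general fact that the spectrum of a multiplication operator on $L^2(\mu)$ equals $\supp(\mu)$; the paper proves both inclusions directly, including the observation (via positive semidefiniteness of $\mu$) that $\supp(\mu_{ij})\subseteq\supp(\mu_{ii})\cap\supp(\mu_{jj})$, which is exactly what makes the union-of-supports definition in the statement agree with the usual trace-measure support. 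One minor remark: the bounded-variation hypothesis on $g_0$ is not what makes the core argument work here---boundedness and uniform positivity of $V$ alone suffice to identify $\mathrm{dom}(L)=H^2$ and to make $C_c^\infty$ a core; the BV assumption enters elsewhere in the paper (through the Sturm--Liouville theory underlying \cref{thm:functional_calc}).
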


Now we rigorously define the bilinear form $b_\wop$ associated to the operator $\wop$ of \eqref{eq:def:wop} and its domain $\wdom$. In the following, we identify the bilinear form $b_\wop \colon \wdom \times \wdom \to \R$ with the weak formulation of the operator $\wop$ via $\wop[u][\varphi] = b_\wop[u, \varphi]$ for $u, \varphi \in \wdom$, i.e., we consider $\wop \colon \wdom \to \wdom'$ where $\wdom'$ is the dual space.

\begin{definition}\label{def:H}
	We define the form domain $\wdom$ by
	\begin{align*}
		\wdom \coloneqq \set{u \in L^2(\R \times \T; \R) \colon \hat u_k = 0 \text{ for } k \in \Z \setminus \rfreq, \ip{u}{u}_\wdom < \infty}
	\end{align*}
	where
	\begin{align*}
		\ip{u}{v}_\wdom
		\coloneqq \sum_{k \in \rfreq} \int_\R \abs{\frac{\lambda - \omega^2 k^2}{\omega^2 k^2 \hat \calN_k}} T_i[\hat u_k](\lambda) \overline{T_j[\hat v_k](\lambda)} \der \mu_{ij}(\lambda).
	\end{align*}
	
	Next, we define the operators $\wop, \wop_0, \wop_1 \colon \wdom \to \wdom'$ by $\wop \coloneqq \wop_0 + \wop_1$ and
	\begin{align*}
		\wop_0[u][\varphi]
		&\coloneqq \sum_{k \in \rfreq} \int_\R \frac{\lambda - \omega^2 k^2}{\omega^2 k^2 \hat \calN_k} T_i[\hat u_k](\lambda) \overline{T_j[\hat \varphi_k](\lambda)} \der \mu_{ij}(\lambda),
		\\
		\wop_1[u][\varphi]
		&\coloneqq \sum_{k \in \rfreq} \int_\R \frac{\hat \calG_k(x)}{\hat \calN_k}  \hat u_k(x) \overline{\hat \varphi_k(x)} \der{x}
	\end{align*}
	for $u, \varphi \in \wdom$.
	We call a function $u$ \emph{weak solution} to \eqref{eq:variational_wave} if $u \in \wdom$ and
	\begin{align*}
		\wop[u][\varphi] - \int_{\R \times \T} h(x) u^3 \varphi \der{(x,t)} = 0
	\end{align*}
	holds for all $\varphi \in \wdom$.
	We show below in \cref{lem:wop_defined,prop:Lp_embedding} that the above integrals and sums are finite and that embedding $\wdom \embeds L^4(\R \times \T)$ and the maps $\wop_0, \wop_1 \colon \wdom \to \wdom'$ are bounded.
\end{definition}

We continue by investigating the operator $\wop$, its domain $\wdom$, and their properties. The following estimate on the symbol $\abs{\lambda - \omega^2 k^2}$ will be useful. 

\begin{remark}\label{rem:abs_symbol_estimate}
	For $k \in \rfreq$ and $\lambda \in \sigma(L)$ we have
	\begin{align*}
		\abs{\lambda - \omega^2 k^2} \geq \delta \abs{k}^\gamma
		\qquad\text{and}\qquad
		\abs{\lambda - \omega^2 k^2} \geq \frac{\delta \abs{k}^\gamma}{\omega^2 k^2 + \delta \abs{k}^\gamma} \lambda.
	\end{align*}
	The first estimate follows directly from \ref{ass:spec}. 
	For the second estimate, we fix $k$ and consider $\frac{\abs{\lambda - \omega^2 k^2}}{\lambda}$ on $\lambda \in \sigma(L) \subseteq [0, \omega^2 k^2 - \delta \abs{k}^\gamma] \cup [\omega^2 k^2 + \delta \abs{k}^\gamma, \infty) \coloneqq I$. We estimate the quotient from below by its minimum value on $I$, which is attained at $\lambda = \omega^2 k^2 + \delta \abs{k}^\gamma$.
\end{remark}

We have the following density result for $\wdom$.

\begin{lemma}\label{lem:dense}
	The set $\wdense \coloneqq \set{u \in \wdom \cap C_c^\infty(\R \times \T; \R) \colon \hat u_k = 0 \text{ for almost all } k \in \rfreq}$ is dense in $\wdom$.
\end{lemma}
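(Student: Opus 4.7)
The plan is to approximate an arbitrary $u \in \wdom$ by elements of $\wdense$ in two stages: first truncate the time-Fourier expansion to finitely many modes, then smooth/cut off each Fourier coefficient in $x$.

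For the first step, given $u \in \wdom$ I set $u^N(x,t) \coloneqq \sum_{k \in \rfreq,\, \abs{k} \leq N} \hat u_k(x) e_k(t)$. Note that $\rfreq$ is symmetric about $0$: since $\calN$ is even, $\hat \calN_{-k} = \hat \calN_k$, so $k \in \rfreq$ iff $-k \in \rfreq$; combined with $\hat u_{-k} = \overline{\hat u_k}$ (as $u$ is real) this makes each $u^N$ real. By definition
\begin{align*}
	\norm{u - u^N}_\wdom^2 = \sum_{k \in \rfreq,\, \abs{k} > N} \int_\R \abs{\tfrac{\lambda - \omega^2 k^2}{\omega^2 k^2 \hat \calN_k}} T_i[\hat u_k] \overline{T_j[\hat u_k]} \der \mu_{ij}(\lambda)
\end{align*}
is the tail of the convergent series $\norm{u}_\wdom^2$, so $u^N \to u$ in $\wdom$.

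For the second step, I fix $N$ and approximate each Fourier coefficient $\hat u_k \in L^2(\R;\C)$ of $u^N$ (for $k \in \rfreq$ with $0 < k \leq N$) by $\phi_{k,n} \in C_c^\infty(\R;\C)$, measured in the per-mode weighted norm appearing above. The key point is that for each fixed $k$ this per-mode norm is equivalent, with constants depending on $k$, to the ordinary $H^1(\R;\C)$-norm: the upper bound follows from $\abs{\lambda - \omega^2 k^2} \leq \lambda + \omega^2 k^2$ together with \cref{lem:basic_calc_properties} (which identifies $\int (1+\lambda)\,T_i[f]\overline{T_j[f]}\,d\mu_{ij}$ with $\norm{f}_{L^2_V}^2 + \norm{f'}_{L^2}^2$), whereas the lower bound comes from combining both inequalities of \cref{rem:abs_symbol_estimate} to produce $\abs{\lambda - \omega^2 k^2} \geq c_k(1 + \lambda)$ on $\sigma(L) = \supp \mu$. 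Boundedness and strict positivity of $V$ (assumption \ref{ass:g0}) then yields equivalence of $L^2_V$ and $L^2$ norms. Since $C_c^\infty(\R;\C)$ is dense in $H^1(\R;\C)$, such approximations exist; setting $\phi_{-k,n} \coloneqq \overline{\phi_{k,n}}$ preserves reality when assembling the finite Fourier sum.

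The approximant $u^{N,n}(x,t) \coloneqq \sum_{k \in \rfreq,\, \abs{k} \leq N} \phi_{k,n}(x) e_k(t)$ then belongs to $\wdense$: it is real, smooth, has compact $x$-support (a finite union of compact supports), its Fourier support is contained in $\rfreq \cap \{\abs{k} \leq N\}$, and $\norm{u^N - u^{N,n}}_\wdom \to 0$ as $n \to \infty$ by construction. A standard diagonal extraction combining both approximations yields a sequence in $\wdense$ converging to $u$. The main technical point I expect to require care is establishing the per-mode norm equivalence with $H^1$, since the constants involve $k$ and require both directions of the symbol estimate; this is precisely why the density has to be carried out in a fixed-$N$ fashion first, rather than attempting to approximate all frequencies simultaneously.
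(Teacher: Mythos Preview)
Your proposal is correct and follows essentially the same approach as the paper's own proof: truncate the temporal Fourier series to finitely many modes, then use the per-mode norm equivalence with $H^1(\R;\C)$ (obtained from \cref{rem:abs_symbol_estimate} and \cref{lem:basic_calc_properties}) together with density of $C_c^\infty$ in $H^1$ to approximate each coefficient. You spell out a few points the paper leaves implicit (reality via $\phi_{-k,n}=\overline{\phi_{k,n}}$, the diagonal extraction), but the argument is the same.
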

\begin{proof}
	Note for $u \in \wdom$ that
	\begin{align*}
		\sum_{\substack{k \in \rfreq \\ \abs{k} \leq K}} \hat u_k(x) e_k(t) \to u	
		\quad\text{in }\wdom
	\end{align*}
	as $K \to \infty$. Moreover, for fixed $k$, using \cref{rem:abs_symbol_estimate} we have
	\begin{align*}
		\abs{\frac{\lambda - \omega^2 k^2}{\omega^2 k^2 \hat \calN_k}} \eqsim_k \lambda + 1
	\end{align*}
	which combined with \cref{lem:basic_calc_properties} shows that $\hat u_k \in H^1(\R; \C)$ and that the norms $\norm{\hat u_k}_{H^1}$ and $\norm{\hat u_k(x) e_k(t)}_\wdom$ are equivalent. As $C_c^\infty(\R; \C) \subseteq H^1(\R; \C)$ is dense, the result follows by approximating $\hat u_k$ for all $\abs{k} \leq K$.
\end{proof}

Lastly, we show that $\wop$ is invertible and indefinite. Both properties are essential for the dual variational method which we use to solve \eqref{eq:variational_wave}.

\begin{lemma}\label{lem:wop_defined}
	$\wop, \wop_0, \wop_1$ are symmetric, $\wop_0$ is an isometric isomorphism and $\norm{\wop_1} < 1$. By the Neumann series, $\wop$ is an isomorphism.
\end{lemma}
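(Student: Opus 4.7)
The plan is to treat the three operators in sequence and conclude with a Neumann-series argument. The main conceptual work lies in showing that $\wop_0$ is an isometric isomorphism, which I realize as a self-adjoint unitary involution on $\wdom$ via the functional calculus of \cref{thm:functional_calc}.

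Symmetry of $\wop_0$ and $\wop_1$ follows from structural symmetries. Evenness of $\calN$ and $\calG(x,\impvar)$ from \ref{ass:calN} and \ref{ass:g1} forces $\hat\calN_k,\hat\calG_k(x)\in\R$ with $\hat\calN_{-k}=\hat\calN_k$ and $\hat\calG_{-k}(x)=\hat\calG_k(x)$; the matrix measure $\mu$ from \cref{thm:functional_calc} satisfies $\mu_{ij}=\mu_{ji}$ because it is real and positive semidefinite; and $\hat u_{-k}=\overline{\hat u_k}$ for real $u\in\wdom$. Symmetry of each bilinear form at fixed $k$ then follows from relabelling the indices $i,j$ using $\mu_{ij}=\mu_{ji}$, while reality of the pairing is obtained by combining the contributions at $k$ and $-k$.

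For the isometry of $\wop_0$, set $s_k(\lambda)\coloneqq\sign(\lambda-\omega^2k^2)\sign(\hat\calN_k)\in\{-1,+1\}$ for $\lambda\in\sigma(L)$, which is well defined by \ref{ass:spec}, and introduce $S\colon\wdom\to\wdom$ acting on the $k$-th Fourier mode as multiplication by $s_k(\lambda)$ in the $T$-picture. Since $s_k^2\equiv 1$ on $\sigma(L)$ and the weight $\Abs{\frac{\lambda-\omega^2k^2}{\omega^2k^2\hat\calN_k}}$ in $\ip{\impvar}{\impvar}_\wdom$ is invariant under this sign flip, $S$ is a self-adjoint unitary involution of $\wdom$. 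Factoring $\frac{\lambda-\omega^2k^2}{\omega^2k^2\hat\calN_k}=s_k(\lambda)\Abs{\frac{\lambda-\omega^2k^2}{\omega^2k^2\hat\calN_k}}$ inside the definition of $\wop_0$ then gives $\wop_0[u][\varphi]=\ip{Su}{\varphi}_\wdom$, so under the Riesz identification $\wdom\cong\wdom'$ the operator $\wop_0$ corresponds to $S$ and is an isometric bijection.

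For $\wop_1$, the pointwise bound $\Abs{\hat\calG_k(x)}\leq\frac{d}{\omega^2\abs{k}^{2-\gamma}}V(x)$ from \ref{ass:g1} and Cauchy-Schwarz against the weight $V$ give
\begin{align*}
	\Abs{\wop_1[u][\varphi]}\leq\sum_{k\in\rfreq}\frac{d}{\Abs{\hat\calN_k}\,\omega^2\abs{k}^{2-\gamma}}\,\norm{\hat u_k}_{L^2_V}\norm{\hat\varphi_k}_{L^2_V}.
\end{align*}
The first inequality of \cref{rem:abs_symbol_estimate}, namely $\abs{\lambda-\omega^2k^2}\geq\delta\abs{k}^\gamma$ on $\supp\mu=\sigma(L)$, combined with the isometry $T\colon L^2_V\to L^2(\mu)$, yields $\norm{\hat u_k}_{L^2_V}^2\leq\frac{\omega^2\abs{k}^{2-\gamma}\Abs{\hat\calN_k}}{\delta}\norm{\hat u_ke_k}_\wdom^2$. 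Inserting this bound for both $u$ and $\varphi$ cancels every $k$-dependent factor, and a final Cauchy-Schwarz over $k\in\rfreq$ produces $\Abs{\wop_1[u][\varphi]}\leq\frac{d}{\delta}\norm{u}_\wdom\norm{\varphi}_\wdom$, so $\norm{\wop_1}\leq d/\delta<1$ by \ref{ass:g1}. Since $\wop_0^{-1}\colon\wdom'\to\wdom$ is itself an isometry, the composition $\wop_0^{-1}\wop_1$ is a bounded operator on $\wdom$ of norm strictly less than $1$, so $\mathrm{Id}+\wop_0^{-1}\wop_1$ is invertible by the Neumann series and $\wop=\wop_0(\mathrm{Id}+\wop_0^{-1}\wop_1)$ is an isomorphism $\wdom\to\wdom'$. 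I expect the main technical obstacle to be verifying that $S$ is well-defined on $\wdom$, i.e.\ that multiplication by $s_k(\lambda)$ preserves the Fourier support constraint $\hat u_k=0$ for $k\notin\rfreq$, the reality constraint $\hat u_{-k}=\overline{\hat u_k}$, and the weighted integrability encoded in $\norm{\impvar}_\wdom$; all three properties are automatic once one records that $s_k(\lambda)$ is $\mu$-measurable, bounded by $1$, and even in $k$.
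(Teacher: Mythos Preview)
Your proof is correct and follows the same approach as the paper: the estimate $\norm{\wop_1}\leq d/\delta$ is obtained exactly as in the paper via \ref{ass:g1}, Cauchy--Schwarz, and \cref{rem:abs_symbol_estimate}, and the Neumann-series conclusion is identical. The only difference is that where the paper simply asserts ``from the definitions of $\wdom$ and $\wop_0$ it follows that $\wop_0$ is an isometric isomorphism'', you make this explicit by introducing the sign-multiplier $S$ and identifying $\wop_0$ with the self-adjoint unitary involution $S$ under the Riesz map; this is precisely the content the paper leaves implicit, so your argument is a fleshed-out version of the same idea rather than a different route.
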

\begin{proof}
	From the definitions of $\wdom$ and $\wop_0$ it follows that $\wop_0$ is an isometric isomorphism, and it is clearly symmetric. By \ref{ass:g1} the potential $\calG(x)$ is even in time, so its Fourier coefficients are real-valued and hence $\wop_1$ is symmetric. It remains to show the bound on $\wop_1$. For this, recall that by \cref{lem:basic_calc_properties} the spectral measure $\mu$ is supported on $\sigma(L)$. Using \cref{rem:abs_symbol_estimate} we have
	\begin{align*}
		\norm{u}_\wdom^2
		\geq \sum_{k \in \rfreq} \int_\R \frac{\delta \abs{k}^\gamma}{\omega^2 k^2 \Abs{\hat \calN_k}} T_i[\hat u_k](\lambda) \overline{T_j[\hat u_k](\lambda)} \der \mu_{ij}(\lambda)
		= \sum_{k \in \rfreq} \frac{\delta}{\omega^2 \abs{k}^{2 - \gamma} \Abs{\hat \calN_k}} \norm{\hat u_k}_{L^2_V}^2.
	\end{align*}
	Next, using assumption \ref{ass:g1}, the Cauchy-Schwarz inequality and the above we find
	\begin{align*}
		\abs{\wop_1[u][\varphi]} 
		&\leq \sum_{k \in \rfreq} \frac{1}{\Abs{\hat \calN_k}} \biggl\lVert\frac{\hat \calG_k}{V}\biggr\rVert_\infty \norm{\hat u_k}_{L^2_V(\R)} \norm{\hat \varphi_k}_{L^2_V}
		\leq \sum_{k \in \rfreq} \frac{d}{\omega^2 \abs{k}^{2 - \gamma} \Abs{\hat \calN_k}} \norm{\hat u_k}_{L^2_V} \norm{\hat \varphi_k}_{L^2_V}
		\\ &\leq \left( \sum_{k \in \rfreq} \frac{d}{\omega^2 \abs{k}^{2 - \gamma} \Abs{\hat \calN_k}} \norm{\hat u_k}_{L^2_V}^2 \right)^{\frac12} \left( \sum_{k \in \rfreq} \frac{d}{\omega^2 \abs{k}^{2 - \gamma} \Abs{\hat \calN_k}}\norm{\hat \varphi_k}_{L^2_V}^2 \right)^{\frac12}
		\leq \frac{d}{\delta} \norm{u}_\wdom \norm{\varphi}_\wdom,
	\end{align*}
	showing $\norm{\wop_1} \leq \frac{d}{\delta} < 1$.
\end{proof}

\begin{lemma}\label{lem:indefinite}
	$\wop$ is an indefinite bilinear form.
\end{lemma}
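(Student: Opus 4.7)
The plan is to exploit the decomposition $\wop = \wop_0 + \wop_1$ provided by \cref{lem:wop_defined}. Inspecting the definitions, the quadratic form of $\wop_0$ satisfies $|\wop_0[u][u]| \leq \|u\|_\wdom^2$ with equality whenever the scalar factor $(\lambda - \omega^2 k^2)/(\omega^2 k^2 \hat\calN_k)$ has constant sign on the spectral support of each $\hat u_k$, while $|\wop_1[u][u]| \leq (d/\delta) \|u\|_\wdom^2$ with $d/\delta < 1$. So it suffices to exhibit $u^+, u^- \in \wdom$ with $\wop_0[u^\pm][u^\pm] = \pm \|u^\pm\|_\wdom^2$; adding the perturbation will then give $\pm \wop[u^\pm][u^\pm] \geq (1 - d/\delta) \|u^\pm\|_\wdom^2 > 0$, proving indefiniteness.

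To construct such $u^\pm$ I would fix a single frequency $k_0 \in \rfreq$, which is nonempty by \ref{ass:calN}, and assume without loss of generality that $\hat\calN_{k_0} > 0$ (otherwise swap the roles of $u^+$ and $u^-$). Using the functional calculus $T$ of \cref{thm:functional_calc}, I would choose real-valued $\varphi^\pm \in L^2_V(\R; \R)$ with $T[\varphi^+]$ supported in $\sigma(L) \cap [\omega^2 k_0^2 + \delta|k_0|^\gamma, \infty)$ and $T[\varphi^-]$ supported in $\sigma(L) \cap [0, \omega^2 k_0^2 - \delta|k_0|^\gamma]$. Setting $u^\pm(x, t) := \sqrt{2}\, \varphi^\pm(x) \cos(k_0 \omega t)$ yields elements of $\wdom$ with Fourier support $\{\pm k_0\} \subseteq \rfreq$, and a direct calculation gives
\begin{align*}
	\wop_0[u^\pm][u^\pm] = \int_\R \frac{\lambda - \omega^2 k_0^2}{\omega^2 k_0^2 \hat\calN_{k_0}} T_i[\varphi^\pm](\lambda) \overline{T_j[\varphi^\pm](\lambda)} \der \mu_{ij}(\lambda),
\end{align*}
where the scalar factor is uniformly positive (resp.\ negative) on the support of $T[\varphi^+]$ (resp.\ $T[\varphi^-]$). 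Hence $\wop_0[u^\pm][u^\pm] = \pm \|u^\pm\|_\wdom^2$ as needed.

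The main obstacle is to show that both spectral intervals above carry positive $\mu$-mass. The upper one is immediate, since $L$ is an unbounded self-adjoint operator on $L^2_V$, so $\sigma(L)$ is unbounded above. For the lower one I plan to show $0 \in \sigma_\mathrm{ess}(L)$ by means of the Weyl sequence $\varphi_n(x) := n^{-1/2} \phi(x/n)$ for a fixed $\phi \in C_c^\infty(\R) \setminus \{0\}$: the substitution $y = x/n$ shows that $\|\varphi_n\|_{L^2_V}$ is bounded above and below (since $V \in L^\infty$ and $\essinf V > 0$), while $L \varphi_n = -n^{-5/2}\phi''(\cdot/n)/V$ satisfies $\|L\varphi_n\|_{L^2_V} = \landauO(n^{-2})$. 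This yields $0 \in \sigma(L)$. Combined with $\ker L = \{0\}$ (as $L\varphi = 0$ forces $\varphi$ to be affine, hence zero in $L^2_V$), this places $0$ in the essential spectrum. Together with $\sigma(L) \subseteq [0, \infty)$ and \ref{ass:spec}, we deduce $\omega^2 k_0^2 - \delta|k_0|^\gamma > 0$: otherwise the gap, combined with the nonnegativity of $\sigma(L)$, would isolate $0$ in $\sigma(L)$, contradicting $0 \in \sigma_\mathrm{ess}(L)$. Since $0 \in \sigma_\mathrm{ess}(L)$, the spectrum accumulates at $0$ from above, so $(0, \omega^2 k_0^2 - \delta|k_0|^\gamma) \cap \sigma(L)$ is nonempty and carries positive $\mu$-mass, providing the required $\varphi^-$ and completing the plan.
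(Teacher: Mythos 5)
Your proposal is correct and follows essentially the same route as the paper: test $\wop_0$ with single-frequency functions whose spectral support lies above, respectively below, the gap at $\omega^2 k_0^2$, and absorb $\wop_1$ using $\norm{\wop_1} \le d/\delta < 1$; you even supply more detail than the paper on why the lower interval $[0, \omega^2 k_0^2 - \delta\abs{k_0}^\gamma]$ carries $\mu$-mass. The only point to tidy is that $T[\varphi^+]$ should be supported in a \emph{bounded} window $[\omega^2 k_0^2 + \delta\abs{k_0}^\gamma, N]$ (as the paper does), since otherwise the weight $\abs{\lambda - \omega^2 k_0^2}$ in the $\wdom$-norm need not be integrable and $u^+ \in \wdom$ is not guaranteed.
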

\begin{proof}
	Clearly, the spectrum $\sigma(L)$ of the Sturm-Liouville operator $L$ contains $0$ and is not bounded from above. Fix $k \in \rfreq$. By \ref{ass:spec} and \cref{lem:basic_calc_properties} we find a function $g \in L^2(\mu) \setminus \set{0}$ that is supported on $[\omega^2 k^2 + \delta \abs{k}^\gamma, N]$ for some $N > 0$. The function
	\begin{align*}
		u(x, t) = T^{-1}[g](x) e_k(t) + T^{-1}[\overline{g}](x) e_{-k}(t).
	\end{align*}
	satisfies
	\begin{align*}
		\wop_0[u][u] = 2 \int_\R \frac{\lambda - \omega^2 k^2}{\omega^2 k^2 \hat \calN_k} g_i(\lambda) \overline{g_j(\lambda)} \der \mu_{ij}(\lambda),
	\end{align*}
	which is nonzero with $\sign(\wop_0[u][u]) = \sign(\hat \calN_k)$. Since $\norm{u}_\wdom^2 = \abs{\wop_0[u][u]}$, \cref{lem:wop_defined} shows that $\wop[u][u]$ is nonzero with $\sign(\wop[u][u]) = \sign(\wop_0[u][u]) = \sign(\hat \calN_k)$.

	If above we instead choose $g$ supported on $[0, \omega^2 k^2 - \delta \abs{k}^\gamma]$, then by the same argument we find $u$ satisfying $\sign(\wop[u][u]) = - \sign(\hat \calN_k)$.
\end{proof}


\section{Embeddings}\label{sec:embeddings}

We investigate the embeddings $\wdom \embeds L^p(\R \times \T)$, and discuss their boundedness in \cref{prop:Lp_embedding} as well as a concentration-compactness result in \cref{prop:Lions}. We use some ideas and results from \cite{henninger_ohrem_reichel}. 

Let us fix some notation. 
Recall that the potential $g_0$ (and therefore also $V$) is $X^+$-periodic on $[R^+, \infty)$ as well as $X^-$-periodic on $(-\infty, R^-]$:
In case \ref{ass:localized_nonlin} this is part of the assumption, and in \ref{ass:perturbed_nonlin} $g_0$ is periodic so we may choose $R^\pm$ arbitrarily and $X^+ = X^-$.
We denote by $V^+$ the $X^+$-periodic extension of $V \vert_{[R^+, \infty)}$ to $\R$ and similarly by $V^-$ the $X^-$-periodic extension of $V \vert_{(-\infty, R^-]}$ to $\R$.
To improve readability, in the following we use the $\pm$ symbol.
Statements involving such double symbols should be read always using the top, or always using the bottom symbol.
We define the periodic Sturm-Liouville operators
\begin{align*}
	L^\pm \coloneqq - \frac{1}{V^\pm(x)} \partial_x^2.
\end{align*}
According to Floquet-Bloch theory (cf. \cite{plum_floquet}), for the spectra of $L^\pm \colon H^2(\R) \to L^2(\R)$ we have
\begin{align*}
	\sigma(L^\pm) = \bigcup_{n \in \N} I_n^\pm
\end{align*}
where $I_n^\pm$ are compact intervals with $\min I_n^\pm \xrightarrow{n\to\infty} \infty$, called \emph{spectral bands}. We assume that they are enumerated in the standard way for Floquet-Block theory: $I_n^\pm$ are increasing, i.e., $\min I_{n + 1}^\pm \geq \max I_n^\pm$, and the boundary points $\set{\min I_n^\pm, \max I_n^\pm \colon n \in \N}$ consist precisely of those $\lambda \in \R$ where $L^\pm f = \lambda f$ admits nonzero $X^\pm$-periodic or $X^\pm$-antiperiodic solutions. 

The operators $L^\pm$ are useful in the study of $L$ together with information on its point spectrum. For example, $\sigma(L) = \sigma_p(L) \cup \sigma_\ess(L) = \sigma_p(L) \cup \sigma(L^+) \cup \sigma(L^-)$ holds (cf. \cite[Lemma~B.1]{henninger_ohrem_reichel}). Information on $L$ allows us to better understand $\wdom$ and characterize its embeddings. We begin with the following sufficient condition for boundedness of the $L^p$-embeddings.

\begin{lemma}\label{lem:Lp_continuity_condition}
	Let $p \in (2, \infty]$, $s \coloneqq \frac{p}{p -2}$ with
	\begin{align*}
		C \coloneqq \sum_{k \in \rfreq} \Abs{k^2 \hat \calN_k}^s \left( \sum_{n \in \N} \dist(\omega^2 k^2, I_n^+)^{-s} + \sum_{n \in \N} \dist(\omega^2 k^2, I_n^-)^{-s} + \sum_{\lambda \in \sigma_p(L)} \abs{\omega^2 k^2 - \lambda}^{-s} \right) < \infty.
	\end{align*}
	Then the embedding $\wdom \embeds L^p(\R \times \T)$ is continuous.
\end{lemma}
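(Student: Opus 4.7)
The plan is to combine a Hausdorff--Young inequality in time with a Hausdorff--Young inequality for the generalized Fourier transform $T$ of \cref{thm:functional_calc}, followed by Hölder in the spectral variable, and to sum over temporal Fourier modes via a mixed-norm Minkowski inequality; the exponent $s = p/(p-2)$ arises through the arithmetic identity $p'/(2-p') = s$ with $p' = p/(p-1)$.

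First, by Hausdorff--Young in $t$ and the mixed-norm Minkowski inequality (applicable since $p \geq 2 \geq p'$), the temporal Fourier decomposition $u = \sum_{k \in \rfreq} \hat u_k(x) e_k(t)$ yields
\begin{align*}
	\norm{u}_{L^p(\R \times \T)}^{p'} \leq \sum_{k \in \rfreq} \norm{\hat u_k}_{L^p(\R)}^{p'}.
\end{align*}
For each $k$, a second Hausdorff--Young inequality $\norm{\hat u_k}_{L^p(\R)}^{p'} \lesssim \int_{\sigma(L)} \abs{T[\hat u_k]}^{p'} \der\mu$ follows by interpolating $T^{-1}\colon L^2(\mu) \to L^2(\R)$ (which is the isometry of \cref{thm:functional_calc}) with $T^{-1}\colon L^1(\mu) \to L^\infty(\R)$; the latter requires a uniform $L^\infty$-bound on the fundamental system $\Psi(\impvar;\lambda)$, which is established in \cite{henninger_ohrem_reichel} via Floquet--Bloch analysis of $L^\pm$.

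Next I apply Hölder in $\mu$ with exponents $\bigl(\tfrac{2}{p'}, \tfrac{2}{2-p'}\bigr)$ to the factorization $\abs{T[\hat u_k]}^{p'} = \bigl(\abs{T[\hat u_k]}^2 a_k\bigr)^{p'/2} \cdot a_k^{-p'/2}$, where $a_k(\lambda) \coloneqq \abs{\lambda - \omega^2 k^2}/(\omega^2 k^2 \abs{\hat\calN_k})$. Using $p'/(2-p') = s$, this yields
\begin{align*}
	\int_{\sigma(L)} \abs{T[\hat u_k]}^{p'} \der\mu \lesssim \norm{\hat u_k e_k}_\wdom^{p'} \cdot \biggl(\int_{\sigma(L)} a_k^{-s} \der\mu\biggr)^{(2-p')/2}.
\end{align*}
Using the spectral decomposition $\sigma(L) = \sigma_p(L) \cup \sigma(L^+) \cup \sigma(L^-)$, the weight bound $a_k(\lambda)^{-1} \leq \omega^2 k^2 \abs{\hat\calN_k}/\dist(\omega^2 k^2, I_n^\pm)$ from \cref{rem:abs_symbol_estimate} on each band, and uniform control of the spectral measure $\mu$ on the bands (again from \cite{henninger_ohrem_reichel}), one obtains
\begin{align*}
	\int_{\sigma(L)} a_k^{-s} \der\mu \lesssim \abs{k^2 \hat\calN_k}^s R_k
\end{align*}
where $R_k \coloneqq \sum_n \dist(\omega^2 k^2, I_n^+)^{-s} + \sum_n \dist(\omega^2 k^2, I_n^-)^{-s} + \sum_{\lambda \in \sigma_p(L)} \abs{\omega^2 k^2 - \lambda}^{-s}$.

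Finally, the summation over $k$ uses Hölder with the same exponent pair $\bigl(\tfrac{2}{p'}, \tfrac{2}{2-p'}\bigr)$: the orthogonality $\sum_{k \in \rfreq} \norm{\hat u_k e_k}_\wdom^2 = \norm{u}_\wdom^2$ collects the $\wdom$-norms into $\norm{u}_\wdom^{p'}$, and the identity $p'/(2-p') = s$ again produces $\abs{k^2 \hat\calN_k}^s$, so the weight factors aggregate into $C^{(2-p')/2}$. The resulting embedding $\norm{u}_{L^p} \lesssim C^{1/(2s)} \norm{u}_\wdom$ is continuous whenever $C < \infty$. The main obstacle is the spectral Hausdorff--Young inequality and the uniform bounds on $\mu(I_n^\pm)$, both of which rely on the Floquet--Bloch analysis of \cite{henninger_ohrem_reichel}; once these are in hand, the exponent bookkeeping is determined by the single identity $p'/(2-p') = s = p/(p-2)$.
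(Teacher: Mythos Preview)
Your approach differs from the paper's. The paper factors the embedding as $\iota_\theta \circ E$, where $E \colon \wdom \to L^2_V(\R\times\T)$ is the isometry $u \mapsto \sum_k T^{-1}\bigl[a_k^{1/2}\, T[\hat u_k]\bigr]\, e_k$ (with $a_k$ your weight) and $\iota_\theta \colon L^2_V \to L^{2/(1-\theta)}$ is the multiplier $u \mapsto \sum_k T^{-1}\bigl[a_k^{-\theta s/2}\, T[\hat u_k]\bigr]\, e_k$. The endpoint $\iota_0 = \mathrm{Id}$ is trivial; the endpoint $\iota_1 \colon L^2_V \to L^\infty$ is bounded when $C<\infty$ by \cite[Lemma~3.26]{henninger_ohrem_reichel}; the claim then follows by complex interpolation at $\theta = 1/s$. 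All the hard spectral analysis is packed into the single $L^\infty$ endpoint, and the interpolation lives on ordinary scalar $L^2$-spaces, so the matrix structure of $\mu$ never surfaces.

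Your route---Hausdorff--Young in $t$, a spectral Hausdorff--Young for $T^{-1}$, then two H\"older steps---is morally sound and reproduces the same bound, but two technical points need more care. First, $\mu$ is a $2\times 2$ matrix-valued measure, so $L^{p'}(\mu)$ and hence the Riesz--Thorin step $T^{-1}\colon L^{p'}(\mu)\to L^p(\R)$ are not standard; you would have to reduce to the scalar diagonal measures $\mu_{11},\mu_{22}$ via positive semidefiniteness before interpolating. Second, the uniform $L^\infty$ bound on $\Psi(\cdot;\lambda)$ and the uniform bound on $\mu(I_n^\pm)$ that you invoke are precisely what underlies \cite[Lemma~3.26]{henninger_ohrem_reichel}, but $\Psi$ is the fundamental system of $L$ (with only \emph{eventually} periodic $V$), not of $L^\pm$, so Floquet--Bloch does not apply directly; one must pass through the transfer matrix across $[R^-,R^+]$. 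The paper's scheme sidesteps both issues by interpolating operators with scalar domain $L^2_V$ and by citing the endpoint result as a black box.
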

\begin{proof}
	We consider the isometry 
	\begin{align*}
		E \colon \wdom \to L^2_V(\R \times \T; \R), 
		~ 
		u \mapsto \sum_{k \in \rfreq} T^{-1}\Bigl[\abs{\frac{\lambda - \omega^2 k^2}{\omega^2 k^2 \hat \calN_k}}^{\frac12} T[\hat u_k](\lambda)\Bigr](x) e_k(t)
	\end{align*} 
	and the family of operators
	\begin{align*}
		\iota_\theta \colon L^2_V(\R \times \T; \R)
		\to L^{\frac{2}{1 - \theta}}(\R \times \T; \R), 
		u \mapsto \sum_{k \in \rfreq} T^{-1}\Bigl[\abs{\frac{\lambda - \omega^2 k^2}{\omega^2 k^2 \hat \calN_k}}^{- \frac{\theta s}{2}} T[\hat u_k](\lambda)\Bigr](x) e_k(t)
	\end{align*}
	where $\theta$ varies over $[0, 1]$. Then $\iota_0 = \mathrm{Id} \colon L^2(\R \times \T; \R) \to L^2(\R \times \T; \R)$ is bounded, and boundedness of $\iota_1$ follows from $C < \infty$ as in \cite[Lemma~3.26]{henninger_ohrem_reichel}. 
	For this, we note that the enumeration of the spectral bands $I_n^\pm$ used in \cite{henninger_ohrem_reichel} coincides with ours.

	By interpolation, each $\iota_\theta$ is bounded. Setting $\theta = \frac1s$, it follows that 
	\begin{align*}
		\iota_\theta E = \mathrm{Id} \colon \wdom \to L^{\frac{2}{1 - \theta}}(\R \times \T; \R) = L^p(\R \times \T; \R)
	\end{align*}
	is bounded.
\end{proof}

We are now ready to state our central embedding result, where we use information on the spectrum of $L$ given in assumption \ref{ass:spec}, in particular the existence and size of the spectral gaps about $\omega^2 k^2$ for $k \in \rfreq$ as well as estimates on the point spectrum, to verify the condition in \cref{lem:Lp_continuity_condition}.

\begin{proposition}\label{prop:Lp_embedding}
	Let $p \in [2, p^\star)$ with $p^\star \coloneqq \min\set{\frac{2 \beta}{\beta - 1}, \frac{4 \beta}{2 \beta - \alpha - \gamma + 2}}$. Then the embedding $\wdom \embeds L^p(\R \times \T)$ is continuous and locally compact. 

	In the above quotients we set $\frac{a}{b} = \infty$ for $b \leq 0$. Note that $p^\star > 4$ holds by assumption \ref{ass:L4_embed}.
\end{proposition}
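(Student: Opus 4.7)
The plan is to apply \cref{lem:Lp_continuity_condition} with $s \coloneqq \frac{p}{p-2}$; a short computation shows that $p \in [2, p^\star)$ is equivalent to $s > \max\set{\beta, \tfrac{2\beta}{\alpha+\gamma-2}}$. Hence I need only verify $C < \infty$, estimating the three inner sums in $C$ (bands of $L^+$, bands of $L^-$, point spectrum of $L$) separately and combining with the factor $\Abs{k^2 \hat\calN_k}^s \lesssim \abs{k}^{(2-\alpha)s}$ from \ref{ass:calN}.

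For the Floquet band contributions I would use that $L^\pm$ are periodic Sturm--Liouville operators, so by Weyl's law $\min I_n^\pm \sim c_\pm n^2$. For fixed $k \in \rfreq$ the band spacings near $\omega^2 k^2$ are of order $\abs{k}$, hence only $O(1)$ bands lie within distance $\abs{k}^\gamma$ of $\omega^2 k^2$; these contribute $O(\abs{k}^{-\gamma s})$ by the spectral gap from \ref{ass:spec}. For distant bands I would use $\dist(\omega^2 k^2, I_n^\pm) \gtrsim \abs{n^2 - k^2}$ leading to a convergent tail. Altogether $\sum_n \dist(\omega^2 k^2, I_n^\pm)^{-s} \lesssim \abs{k}^{-\gamma s}$, so the band contribution to $C$ is bounded by $\sum_k \abs{k}^{(2-\alpha-\gamma)s}$, which converges since $(\alpha+\gamma-2)s > 2\beta \geq 1$.

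The point spectrum term is the main obstacle. A naive bound (estimating $\#\set{\lambda \in \sigma_p(L) \colon \lambda \leq N} \lesssim N^{\beta+\eps}$ and summing independently over $k$ and $\lambda$) falls short of the sharp exponent by a factor of $\abs{k}$. Instead I would interchange summation and show that
\begin{align*}
	f(\lambda) \coloneqq \lambda^{\beta+\eps} \sum_{k \in \rfreq} \Abs{k^2 \hat\calN_k}^s \Abs{\omega^2 k^2 - \lambda}^{-s}
\end{align*}
is uniformly bounded; then $\sum_\lambda \abs{\omega^2 k^2 - \lambda}^{-s}$ summed against $\Abs{k^2\hat\calN_k}^s$ becomes $\sum_\lambda \lambda^{-\beta-\eps} f(\lambda) \leq \norm{f}_\infty \sum_\lambda \lambda^{-\beta-\eps} < \infty$ by \ref{ass:spec}. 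For fixed $\lambda$ large, letting $k_0 \in \rfreq$ be the element closest to $\sqrt\lambda/\omega$, I would split the $k$-sum into three regimes: $k = k_0$ (using the gap estimate $\Abs{\omega^2 k_0^2 - \lambda} \geq \delta k_0^\gamma$), $1 \leq \abs{k - k_0} \leq k_0/2$ (using $\Abs{\omega^2 k^2 - \lambda} \gtrsim \omega \sqrt\lambda \abs{k - k_0}$), and $\abs{k} \geq 2 k_0$ (using $\Abs{\omega^2 k^2 - \lambda} \gtrsim \omega^2 k^2$). Each regime contributes a power of $\lambda$ that is bounded when multiplied by $\lambda^{\beta+\eps}$ precisely under the conditions $(\alpha+\gamma-2) s > 2\beta$, $(\alpha-1)s > 2\beta$, and $\alpha s > 2\beta + 1$, respectively. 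The first is the hypothesis; the second follows from the first via $\gamma \leq 1$; the third follows from the first combined with $s > 1$ (true for $p < \infty$, using $\alpha s = (\alpha+\gamma-2)s + (2-\gamma)s > 2\beta + s$).

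For local compactness I would argue by interpolation. Fixing $p'' \in (p, p^\star)$, the continuous embedding $\wdom \embeds L^{p''}_\loc$ just established gives uniform boundedness of any $\wdom$-bounded sequence in $L^{p''}_\loc$. Simultaneously, by \cref{rem:abs_symbol_estimate} the $\wdom$-norm dominates each $\Norm{\hat u_k}_{H^1(I)}$ on bounded $x$-intervals $I$ (using the identity $\int_\R \lambda T_i[f]T_j[f] \der\mu_{ij} = \int \abs{f'}^2 \der x$ from \cref{lem:basic_calc_properties}), so Rellich--Kondrachov combined with a diagonal argument in $k$ and tail control (coming from the weighted $\ell^2$-summability inherent in the $\wdom$-norm) yields strong $L^2_\loc$-convergence along a subsequence. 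Interpolating between the $L^{p''}_\loc$-bound and the $L^2_\loc$-compactness gives the asserted compact embedding into $L^p_\loc$.
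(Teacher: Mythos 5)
Your continuity argument follows the same route as the paper's: both reduce to \cref{lem:Lp_continuity_condition}, interchange the two summations in the point-spectrum term, and split according to the relative size of $\omega^2k^2$ and $\lambda$, ending up with the same conditions ($s>\beta$, $(\alpha+\gamma-2)s>2\beta$, $(\alpha-1)s>2\beta$, $\alpha s>2\beta+1$), which are exactly the paper's condition \eqref{eq:loc:Lp_cond}; your reductions among these via $\gamma\le 1$ and $s>1$ are correct. One repair is needed: your three regimes $k=k_0$, $1\le\abs{k-k_0}\le k_0/2$, $\abs{k}\ge 2k_0$ do not cover all of $\rfreq$ — the frequencies with $\abs{k}\le k_0/2$ (i.e.\ $\omega^2k^2\ll\lambda$) and those with $\abs{k}\in(\tfrac32 k_0,2k_0)$ are missed. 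The latter are handled exactly as in your far regime; for the former, $\abs{\omega^2k^2-\lambda}\gtrsim\lambda$ and $\sum_{\abs{k}\lesssim k_0}\abs{k}^{(2-\alpha)s}\lesssim 1+\lambda^{(1+(2-\alpha)s)/2}$, which after multiplication by $\lambda^{\beta+\eps-s}$ is bounded precisely under $s>\beta$ and $\alpha s>2\beta+1$ — conditions you already have (this omitted region is the paper's first case). Also note that your middle-regime bound $\abs{\omega^2k^2-\lambda}\gtrsim\sqrt\lambda\,\omega\abs{k-k_0}$ does hold even though $\rfreq$ may be sparse, because $k_0$ being the closest element gives $\abs{k-\sqrt\lambda/\omega}\ge\tfrac12\abs{k-k_0}$. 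Your treatment of the band sums via Weyl asymptotics and the gap from \ref{ass:spec} is a valid alternative to the paper's device of reducing the band sum to the point-spectrum estimate with $\beta$ replaced by $\tfrac12$. Finally, for local compactness the paper merely cites a frequency-cutoff argument from an external reference, whereas you give a self-contained proof (modewise Rellich--Kondrachov in $x$, uniform tail control in $k$ from the growing weights $\abs{k}^{\alpha+\gamma-2}$ hidden in the $\wdom$-norm, then interpolation against the $L^{p''}_{\loc}$ bound for some $p''\in(p,p^\star)$); this is correct and arguably more informative than the citation.
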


\begin{proof}
	We only consider $p > 2$ and begin by showing continuity of the embedding. First we show that the sum corresponding to the point spectrum
	\begin{align}\label{eq:loc:point_spec_double_sum}
		\sum_{k \in \rfreq} \Abs{k^2 \hat \calN_k}^s \sum_{\lambda \in \sigma_p(L)} \abs{\omega^2 k^2 - \lambda}^{-s}
	\end{align}
	with $s \coloneqq \frac{p}{p - 2}$ is finite. Recall that $\Abs{\hat \calN_k} \lesssim \abs{k}^{-\alpha}$ due to assumption \ref{ass:calN}. We use the estimate
	\begin{align*}
		\sum_{k = m}^n k^r 
		\eqsim_r \int_m^n k^r 
		= \frac{n^{r + 1} - m^{r + 1}}{r + 1}
		\lesssim_r n^{r + 1} + m^{r + 1}
	\end{align*}
	on integer sums with $r \in \R \setminus \set{-1}$ and $m, n \in \N$ with $m < n$. 
	To keep notation simple, below we assume that we are in the generic case $r \neq -1$. 
	For $r = -1$ we can use $\sum_{k = m}^n k^{-1} \lesssim_\eps n^\eps$ instead, which leads to the same results provided $\eps$ is chosen sufficiently small. 

	We use separate estimates for \eqref{eq:loc:point_spec_double_sum} in the three cases $\omega^2 k^2 \ll \lambda$, $\omega^2 k^2 \approx \lambda$, and $\omega^2 k^2 \gg \lambda$. First, we calculate
	\begin{align*}
		\sum_{\lambda \in \sigma_p(L)} \sum_{\substack{k \in \rfreq \\ \omega^2 k^2 < \frac12 \lambda}} \abs{k}^{(2 - \alpha) s} \abs{\omega^2 k^2 - \lambda}^{-s}
		&\eqsim \sum_{\lambda \in \sigma_p(L)} \sum_{\substack{k \in \rfreq \\ \omega^2 k^2 < \frac12 \lambda}} \abs{k}^{(2 - \alpha) s} \lambda^{-s}
		\\* &\lesssim \sum_{\lambda \in \sigma_p(L)} \Bigl(1 + \lambda^\frac{1 + (2 - \alpha)s}{2}\Bigr) \lambda^{-s}.
	\end{align*}
	Second, we have 
	\begin{align*}
		\sum_{\lambda \in \sigma_p(L)} \sum_{\substack{k \in \rfreq \\ \omega^2 k^2 \geq 2 \lambda}} \abs{k}^{(2 - \alpha) s} \abs{\omega^2 k^2 - \lambda}^{-s}
		&\eqsim \sum_{\lambda \in \sigma_p(L)} \sum_{\substack{k \in \rfreq \\ \omega^2 k^2 \geq 2 \lambda}} \abs{k}^{(2 - \alpha) s} \abs{k}^{-2 s}
		\\* &\lesssim \sum_{\lambda \in \sigma_p(L)} \lambda^{\frac{1 - \alpha s}{2}}
	\end{align*}
	For the third and last sum we use $\abs{\omega^2 k^2 - \lambda} = (\omega \abs{k} + \sqrt{\lambda}) \abs{\omega \abs{k} - \sqrt{\lambda}} \geq \sqrt{\lambda} \omega n$ where $n = \floor{\Abs{\abs{k} - \frac{1}{\omega} \sqrt{\lambda}}}$. Observe that each $n \in \N_0$ is attained as a value for at most four $k$. 
	By \ref{ass:spec} we also have $\abs{\omega^2 k^2 - \lambda} \geq \delta \abs{k}^\gamma$, which we use instead when $n = 0$. Therefore 
	\begin{align*}
		\sum_{\lambda \in \sigma_p(L)} \sum_{\substack{k \in \rfreq \\ \frac12 \lambda \leq \omega^2 k^2 < 2 \lambda}} \abs{k}^{(2 - \alpha) s} \abs{\omega^2 k^2 - \lambda}^{-s}
		&\eqsim\sum_{\lambda \in \sigma_p(L)} \lambda^{\frac{(2 - \alpha) s}{2}} \sum_{\substack{k \in \rfreq \\ \frac12 \lambda \leq \omega^2 k^2 < 2 \lambda}} \abs{\omega^2 k^2 - \lambda}^{-s}
		\\* &\lesssim \sum_{\lambda \in \sigma_p(L)} \lambda^{\frac{(2 - \alpha) s}{2}}
		\left( \delta \lambda^{-\frac{\gamma s}{2}} + \sum_{n = 1}^{\infty} \lambda^{-\frac{s}{2}}n^{-s} \right)
	\end{align*}
	By \ref{ass:spec} all the above sums are finite provided 
	\begin{align}\label{eq:loc:Lp_cond}
		\min\set{s, \frac{\alpha s - 1}{2}, \frac{(\alpha + \gamma - 2) s}{2}, \frac{(\alpha - 1) s}{2}} > \beta.
	\end{align}
	Using $\frac{\alpha s - 1}{2} > \frac{(\alpha - 1) s}{2} \geq \frac{(\alpha + \gamma - 2)s}{2}$, a direct calculation shows that \eqref{eq:loc:Lp_cond} holds for $p < p^\star$.

	For the remaining sums appearing in the constant $C$ of \cref{lem:Lp_continuity_condition}, we first estimate
	\begin{align}\label{eq:loc:42}
		\sum_{n \in \N} \dist(\omega^2 k^2, I_n^\pm)^{-s} 
		\leq \sum_{\lambda \in S^\pm} \abs{\omega^2 k^2 - \lambda}^{-s} 
	\end{align}
	with $S^\pm \coloneqq \set{\bd I_n^\pm \colon n \in \N} \setminus \set0$. By the proof of \cite[Theorem~3.27]{henninger_ohrem_reichel} the spectral bands $I_n^\pm$ grow quadratically, so that $\sum_{\lambda \in S^\pm} \lambda^{-\frac12 - \eps} < \infty$ for all $\eps > 0$. We can therefore estimate the right-hand side of \eqref{eq:loc:42} as we did for the point spectrum, except $\beta$ is replaced by $\frac12$ in \eqref{eq:loc:Lp_cond}. 
	As we required $\beta \geq \frac12$ in \cref{thm:main}, this generates no additional requirements. 

	Finally, local compactness of the embedding follows by a frequency cutoff approximation argument as in the proof of \cite[Theorem~3.27]{henninger_ohrem_reichel}.
\end{proof}

For considerations of regularity, we use an improved embedding result introduced next, showing that low order temporal derivatives of a function $u \in \wdom$ still lie in $L^4(\R \times \T)$.

\begin{definition}
	We define the fractional temporal derivative $\abs{\partial_t}^s f$ of a function $f \colon \T \to \C$ for $s \in \R$ (with $\hat f_0 = 0$ if $s < 0$) as the Fourier multiplier with symbol $\abs{\omega k}^s$, i.e., by $\abs{\partial_t}^s f = \F^{-1}[\abs{\omega k}^s \hat f_k]$.
\end{definition}

\begin{remark}\label{rem:small_derivative_bounded}
	As in the proof of \cref{prop:Lp_embedding} we see that
	\begin{align*}
		\abs{\partial_t}^\eps \colon \wdom \to L^p(\R \times \T; \R)
	\end{align*} 
	is bounded and locally compact for $p \in [2, p^\star_\eps)$ with $p^\star_\eps \coloneqq \min\set{\frac{2 \beta}{\beta - 1}, \frac{4 \beta}{2 \beta - \alpha + 2 \eps - \gamma + 2}}$. 
	Assumption \ref{ass:L4_embed} implies $p^\star_\eps > 4$ for sufficiently small $\eps > 0$.
\end{remark}

We now prove a variant of the concentration-compactness principle of Lions.   

\begin{proposition}\label{prop:Lions}
	Let $p \in [2, p^\star)$ where $p^\star$ is given by \cref{prop:Lp_embedding}, $r > 0$, $w \colon \R \to [0, \infty)$ be a bounded and measurable weight function, and $(u_n)$ be a bounded sequence in $\wdom$ with 
	\begin{align}\label{eq:loc:evanescence_cond}
		\sup_{x \in \R} \int_{[x - r, x + r] \times \T} \abs{u_n}^p \der[w(x)] {(x,t)} 
		\xrightarrow{n \to \infty} 0.
	\end{align}
	Then $u_n \xrightarrow{n \to \infty} 0$ in $L^q_w(\R \times \T) = L^q(\R \times \T; w(x) \Der (x, t))$ for all $q \in (2, p^\star)$.
\end{proposition}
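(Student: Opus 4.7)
The plan is to adapt Lions' concentration-compactness principle to our weighted setting, proceeding in three steps.

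First I would reduce to establishing $u_n \to 0$ in $L^{q_*}_w$ for a single exponent $q_* \in (\max\{p,2\}, p^\star)$. Once that is in hand, for any other $q \in (2, p^\star)$ I pick $\tilde q \in (\max\{q, q_*\}, p^\star)$, apply the single-exponent result at $\tilde q$, and invoke the elementary global Hölder interpolation
\[
\norm{u_n}_{L^q_w(\R \times \T)} \le \norm{u_n}_{L^2_w(\R \times \T)}^{\alpha} \norm{u_n}_{L^{\tilde q}_w(\R \times \T)}^{1-\alpha}, \qquad \alpha \in (0,1),
\]
together with the uniform boundedness of $\norm{u_n}_{L^2_w}$ from \cref{prop:Lp_embedding}, to conclude. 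This reduction covers the range $q \le p$, which is not directly accessible by a local interpolation that uses only $L^p$ and $L^{q_1}$ with $q_1 > p$.

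For the single-exponent case with $q_* \in (p, p^\star)$ I pick an auxiliary $q_1 \in (q_*, p^\star)$ and partition $\R$ into disjoint intervals $I_j = [2rj, 2r(j+1))$, $j \in \Z$. Abbreviating
\[
A_j := \int_{I_j \times \T} \abs{u_n}^p w \der{(x,t)}, \qquad B_j := \int_{I_j \times \T} \abs{u_n}^{q_1} w \der{(x,t)},
\]
Hölder's inequality applied to the pointwise identity $\abs{u}^{q_*} = (\abs{u}^p)^{\lambda} (\abs{u}^{q_1})^{1-\lambda}$ with $\lambda = (q_1 - q_*)/(q_1 - p) \in (0,1)$ (so that $\lambda p + (1-\lambda) q_1 = q_*$) gives $\int_{I_j \times \T} \abs{u_n}^{q_*} w \der{(x,t)} \le A_j^\lambda B_j^{1-\lambda}$. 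The hypothesis says $\sup_j A_j \to 0$, while \cref{prop:Lp_embedding} together with disjointness of the cover gives uniform boundedness of $\sum_j A_j = \norm{u_n}_{L^p_w}^p$ and $\sum_j B_j = \norm{u_n}_{L^{q_1}_w}^{q_1}$. A Lions-style Hölder summation, extracting a small factor $(\sup_j A_j)^\eta$ with some $\eta > 0$ and controlling the remaining sums by the $\ell^1$-bounds, then yields $\norm{u_n}_{L^{q_*}_w(\R \times \T)}^{q_*} \le C (\sup_j A_j)^\eta \to 0$.

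The main obstacle is precisely this summation step: the Hölder exponents must be balanced so that one extracted factor carries the vanishing $\sup_j A_j$ while the remaining factors reduce exactly to the $\ell^1$-bounds on $(A_j)$ and $(B_j)$ with no exponent dropping below what the uniform bounds provide. I expect this to need a careful three-term Hölder or an auxiliary sup-interpolation with a small parameter; I would cross-check the precise exponent algebra against the analogous concentration-compactness statement in \cite{henninger_ohrem_reichel}, on which this paper already relies for the embedding \cref{prop:Lp_embedding}.
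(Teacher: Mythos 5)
Your two reductions (passing to a single target exponent by global interpolation against the bounded $L^2_w$-norms, and lowering the hypothesis exponent to $p=2$ by H\"older on the local integrals using boundedness of $w$) are fine and match the paper's opening move. The gap is exactly where you flag it: the summation step cannot be closed with the information you have. Without peeling anything off, H\"older gives $\sum_j A_j^\lambda B_j^{1-\lambda}\le(\sum_j A_j)^\lambda(\sum_j B_j)^{1-\lambda}$, which is bounded but not small; once you peel off $(\sup_j A_j)^\eta$ you are left with $\sum_j A_j^{\lambda-\eta}B_j^{1-\lambda}$, and since $(\lambda-\eta)+(1-\lambda)=1-\eta<1$ the remaining exponents are no longer conjugate in the direction that $\ell^1$-control of $(A_j)$ and $(B_j)$ can absorb. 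This is not a matter of balancing exponents more carefully: take $A_j=B_j=\eps$ for $N=\lceil 1/\eps\rceil$ indices and $0$ otherwise. Then $\sup_j A_j\to0$ and $\sum_j A_j,\ \sum_j B_j\lesssim1$, yet $\sum_j A_j^\lambda B_j^{1-\lambda}=N\eps\approx 1$ for every $\lambda\in(0,1)$. So no purely combinatorial manipulation of the local $L^p_w$- and $L^{q_1}_w$-masses, using only the global embeddings of \cref{prop:Lp_embedding}, can prove the proposition.

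What is missing is a \emph{local} form of the embedding: a norm dominated by $\norm{\impvar}_\wdom$ that is additive over the disjoint intervals $I_j$ at a power \emph{strictly smaller} than the power at which it controls the local $L^q$-norm. The paper supplies exactly this. It introduces the auxiliary space $H$ with $\norm{u}_H^2=\sum_{k\in\rfreq}\abs{k}^{\alpha+\gamma-4}\int_\R\abs{\hat u_k'}^2+V(x)k^2\abs{\hat u_k}^2\der{x}$, which is local in $x$ and hence $\ell^2$-additive over the $I_j$; it proves $\wdom\embeds H$ from the spectral lower bound of \cref{rem:abs_symbol_estimate}; and it establishes the local estimate $\norm{u}_{L^q(I_j\times\T)}^2\lesssim$ ($H$-contribution on $I_j$) by a spatial Fourier series on $I_j$ combined with a discrete H\"older/Sobolev argument using \ref{ass:L4_embed}. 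Summing over $j$ yields $\norm{u}_{\ell^2L^q_w}\lesssim\norm{u}_\wdom$, and the conclusion follows from $\norm{u_n}_{L^{q_\theta}_w}=\norm{u_n}_{\ell^{p_\theta}L^{q_\theta}_w}\le\norm{u_n}_{\ell^{2}L^{q}_w}^{\theta}\norm{u_n}_{\ell^{\infty}L^{2}_w}^{1-\theta}\to0$. This $\ell^2$-summability of the local $L^q$-norms (rather than the $\ell^{q}$-summability your $B_j$'s provide) is the analogue of $\sum_B\norm{u}_{H^1(B)}^2\lesssim\norm{u}_{H^1}^2$ in the classical Lions lemma, and it is the ingredient your proposal would need to add.
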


\begin{proof}
	\textit{Part 1:}
	By Hölder's inequality it suffices to give the proof for $p = 2$ and one $q \in (2, p^\star)$, which we choose later. Inspired by \cite{maier_reichel_schneider}, we consider an auxiliary Hilbert space $H$ defined by 
	\begin{align*}
		H &= \set{u \in L^2(\R \times \T; \R) \colon \hat u_k = 0\text{ for } k \in \Z \setminus \rfreq, \norm{u}_H < \infty},
		\\ \norm{u}_H^2 &= \sum_{k \in \rfreq} \abs{k}^{\alpha + \gamma - 4} \int_\R \abs{\hat u_k'}^2  + V(x) k^2\abs{\hat u_k}^2 \der x.
	\end{align*}
	The $H$-norm is local in $x$ which will allow us to get additional information on the desired embedding by considering $\wdom \embeds H \embeds L^q_w(\R \times \T)$.

	Using assumption~\ref{ass:calN}, \cref{lem:basic_calc_properties} and the estimate $\abs{\lambda - \omega^2 k^2} \gtrsim \abs{k}^{\gamma} + \lambda \abs{k}^{\gamma - 2}$ (see \cref{rem:abs_symbol_estimate}) we find
	\begin{align*}
		\norm{u}_\wdom^2
		&\gtrsim \sum_{k \in \rfreq} \abs{k}^{\alpha - 2} 
		\int_\R \abs{\lambda - \omega^2 k^2} T_i[\hat u_k](\lambda) \overline{T_j[\hat u_k](\lambda)} \der \mu_{ij}(\lambda)
		\\ &\gtrsim \sum_{k \in \rfreq} \abs{k}^{\alpha + \gamma - 4} \int_\R \left(k^2 + \lambda\right) T_i[\hat u_k](\lambda) \overline{T_j[\hat u_k](\lambda)} \der \mu_{ij}(\lambda)
		\\ &= \sum_{k \in \rfreq} \abs{k}^{\alpha + \gamma - 4} \int_\R \abs{\hat u_k'}^2  + V(x) k^2\abs{\hat u_k}^2 \der x
		= \norm{u}_H^2,
	\end{align*}
	so that $\wdom \embeds H$ is bounded.

	\textit{Part 2:} We now consider the embedding $H \embeds L^q(\R \times \T)$. For this, let $I \subseteq \R$ be an interval of length $2 r$. On $I$, let $\varphi_n(x) = \frac{1}{\sqrt{2 r}} \ee^{\ii n \frac{\pi}{r} x}$ and define the spatial Fourier transform $F[\phi]$ of $\phi \colon I \to \C$ by
	\begin{align*}
		F_n[\phi] = \int_{I} \phi(x) \overline{\varphi_n(x)} \der x
		\qquad\text{for}\quad
		n \in \Z.
	\end{align*}
	Fix some $s > 8$ and define $q > 2 > q'$ by $\frac{1}{q'} = 1 - \frac{1}{q} = \frac{1}{2} + \frac{1}{s}$.
	We calculate 
	\begin{align*}
		\MoveEqLeft \norm{ \left(\abs{k}^{\alpha + \gamma - 4} (n^2 + k^2)\right)^{-\frac12}}_{\ell^s(\Z \times \rfreq)}^s 
		= \sum_{k \in \rfreq} \abs{k}^{-\frac{(\alpha + \gamma - 4) s}{2}} \sum_{n \in \Z} (n^2 + k^2)^{-\frac{s}{2}}
		\\ &\lesssim \sum_{k \in \rfreq} \abs{k}^{-\frac{(\alpha + \gamma - 4) s}{2}} \cdot \abs{k}^{1-s}
		\leq \sum_{k \in \rfreq} \abs{k}^{1-\frac{\beta s}{2}}
		< \infty,
	\end{align*}
	where we used $\alpha + \gamma - 2 > \beta \geq \frac{1}{2}$. From this, for $u \in H$ we obtain
	\begin{align*}
		\norm{u}_{L^q_w(I \times \T)}^2
		& \lesssim \norm{u}_{L^q(I \times \T)}^2
		\lesssim \norm{F_n[\hat u_k]}_{\ell^{q'}(\Z \times \rfreq)}^2
		\\ &\leq \norm{ \left(\abs{k}^{\alpha + \gamma - 4} (n^2 + k^2)\right)^{\frac12} F_n[\hat u_k]}_{\ell^2(\Z \times \rfreq)}^2 \norm{ \left(\abs{k}^{\alpha + \gamma - 4} (n^2 + k^2)\right)^{-\frac12}}_{\ell^s(\Z \times \rfreq)}^2
		\\ &\lesssim \sum_{k \in \rfreq} \abs{k}^{\alpha + \gamma - 4} \sum_{n \in \Z} \left(n^2 + k^2 \right)\abs{F_n[\hat u_k]}^2 \der x
		\\ &\lesssim \sum_{k \in \rfreq} \abs{k}^{\alpha + \gamma - 4} \int_I \abs{\hat u_k'}^2  + V(x) k^2\abs{\hat u_k}^2 \der x.
	\end{align*}
	We now choose intervals $I_j \coloneqq [(2 j - 1) r, (2 j + 1) r]$, and define the norm
	\begin{align*}
		\norm{u}_{\ell^p L^q_w} \coloneqq \norm{\left( \norm{u}_{L^q_w(I_j \times \T)} \right)_j}_{\ell^p(\Z)}.
	\end{align*}
	Then, using the above for $I = I_j$ and summing over $j$ we obtain
	\begin{align*}
		\norm{u}_{\ell^2 L^q_w} \lesssim \norm{u}_H \lesssim \norm{u}_\wdom
	\end{align*}
	By Hölder interpolation we have
	\begin{align*}
		\norm{u_n}_{\ell^{p_\theta} L^{q_\theta}_w}
		\leq \norm{u_n}_{\ell^{2} L^{q}_w}^{\theta}
		\norm{u_n}_{\ell^{\infty} L^{2}_w}^{1 - \theta}
	\end{align*}
	with $\frac{1}{p_\theta} = \frac{\theta}{2} + \frac{1 - \theta}{\infty}$, $\frac{1}{q_\theta} = \frac{\theta}{q} + \frac{1 - \theta}{2}$ for all $\theta \in [0, 1]$. Now fix $\theta \in (0, 1)$ to be the unique solution to $p_\theta = q_\theta$, and let $(u_n)$ be a bounded sequence in $\wdom$ satisfying \eqref{eq:loc:evanescence_cond} with $p = 2$, i.e., $\norm{u_n}_{\ell^\infty L^2_w} \to 0$ as $n \to \infty$. This shows 
	\begin{align*}
		\norm{u_n}_{L^{q_\theta}_w(\R \times \T)} 
		= \norm{u_n}_{\ell^{p_\theta}L^{q_\theta}_w} 
		\lesssim \norm{u_n}_\wdom^{\theta}
		\norm{u_n}_{\ell^{\infty} L^{2}}^{1 - \theta} 
		\to 0
	\end{align*}
	as $n \to \infty$, completing the proof.
\end{proof}


\section{The dual problem}\label{sec:dual_problem}

We solve \eqref{eq:variational_wave} variationally with the dual variational method. 
Using that $h$ is positive and bounded by \ref{ass:h}, we formally substitute $v \coloneqq h^{\frac34} u^3$ in \eqref{eq:variational_wave} and multiply with $h^{\frac14} \wop^{-1}$ to obtain the dual problem
\begin{align*}
	\wop u - h \rproj[u^3] = 0
	\iff 
	\wop h^{-\frac14} v^{\frac13} - h^{\frac14} \rproj[v] = 0
	\iff 
	v^{\frac13} - h^{\frac14} \wop^{-1} h^{\frac14} \rproj[v] = 0
\end{align*}
where $v^{\frac13}$ denotes the real cube root of the real-valued function $v$. We abbreviate the weighted inverse operator by $\iwop = h^{\frac14} \wop^{-1} h^{\frac14} \rproj$, and thus consider the problem
\begin{align}\label{eq:dual_wave}
	v^{\frac13} - \iwop v = 0.
\end{align}
Having solved \eqref{eq:dual_wave}, we can formally recover a solution $u$ of \eqref{eq:variational_wave} by setting 
\begin{align*}
	u = h^{-\frac14} v^{\frac13} = \wop^{-1} h^{\frac14} \rproj[v].
\end{align*}

\begin{remark}\label{rem:negative_h}
	As stated in \cref{rem:on_main_thm}, we can also consider \eqref{eq:variational_wave} for negative $h$, for which the dual problem is given by
	\begin{align*}
		\wop u - h \rproj[u^3] = 0
		\iff 
		(-\wop) u - (-h) \rproj[u^3] = 0
		\iff
		v^{\frac13} - (-h)^{\frac14} (-\wop)^{-1} (-h)^{\frac14} \rproj[v] = 0
	\end{align*}
	with $v \coloneqq (-h)^{\frac34} u^3$.
	The properties of $\wop$ that we use below (symmetry, invertibility, indefiniteness) are also satisfied by $-\wop$. Therefore our results on \eqref{eq:variational_wave} and \eqref{eq:dual_wave} can be transferred to the negative case by the substitution $(h, \wop) \leadsto (-h, -\wop)$.
\end{remark}

Next we properly define the operator $\iwop$.
\begin{definition}\label{def:weak_dual_solution}
	Let $\iota \colon \wdom \embeds L^4(\R \times \T; \R)$ be the bounded embedding of \cref{prop:Lp_embedding} and $\iota'$ be its adjoint.
	Then, using that $\wop \colon \wdom \to \wdom'$ is invertible according to \cref{lem:wop_defined}, we define the $h$-weighted inverse by
	\begin{align*}
		\iwop \coloneqq h^{\frac14} \iota \wop^{-1} \iota' h^{\frac14} \colon L^{\frac43}(\R \times \T; \R) \to L^4(\R \times \T; \R)
	\end{align*}
	Next, we call a function $v$ a \emph{solution} to \eqref{eq:dual_wave} if $v \in L^{\frac43}(\R \times \T; \R)$ is a critical point of the energy functional
	\begin{align*}
		J \colon L^{\frac43}(\R \times \T; \R) \to \R, ~ J(v) \coloneqq \int_{\R \times \T} \frac{3}{4} \abs{v}^{\frac43} - \frac12 \iwop v \cdot v \der (x, t),
	\end{align*}
	or equivalently if $v^{\frac13} - \iwop v = 0$ in $L^4(\R \times \T; \R)$.
\end{definition}

\begin{remark}
	In \cref{def:weak_dual_solution}, note that $\rproj \iota = \iota$ holds by definition of $\wdom$, hence we can omit $\rproj$ in the definition of $\iwop$. By \cref{prop:Lp_embedding} the map $\iwop$ is locally compact.
\end{remark}

In \cref{thm:existence} we show that there exists a nonzero solution to the dual problem \eqref{eq:dual_wave}. More precisely, we show that there exists a ground state as defined below.

\begin{definition}
	We call the energy level 
	\begin{align*}
		\elgs \coloneqq \inf_{\substack{v \in L^{\nicefrac43}(\R \times \T; \R) \setminus \set{0} \\ J'(v) = 0}} J(v)
	\end{align*}
	the \emph{ground state energy level}, and any nonzero critical point $v$ of $J$ with $J(v) =  \elgs$ is called a \emph{ground state}.
\end{definition}

\begin{remark}
	The substitution $v = h^{\frac34} u^3$, which gives a one-to-one correspondence between solutions $u$ of \eqref{eq:variational_wave} and solutions $v$ to \eqref{eq:dual_wave}, also links the ground states of the two problems. Indeed, if $u$ is a solution of \eqref{eq:variational_wave}, which is the Euler-Lagrange equation of 
	\begin{align*}
		\tilde J(u) = \int_{\R \times \T} \tfrac12 \wop u \cdot u - \tfrac14 h u^4 \der (x, t),
	\end{align*}
	we have
	\begin{align*}
		\tilde J(u) = \tilde J(u) - \tfrac12 \tilde J'(u)[u]
		= \tfrac14 \int_{\R \times \T} h u^4 \der(x, t)
		= \tfrac14 \int_{\R \times \T} \abs{v}^{\frac43} \der (x, t)
		= J(v) - \tfrac12 J'(v)[v] = J(v).
	\end{align*}
\end{remark}

To show existence of ground states, we first use the mountain pass method to obtain a Palais-Smale sequence for $J$. 

\begin{definition}
	We call a sequence $(v_n)$ in $L^{\frac43}(\R\times\T; \R)$ a \emph{Palais-Smale sequence for $J$ at level $\el \in \R$} if $J(v_n) \to \el$ in $\R$ and $J'(v_n) \to 0$ in $\wdom'$ as $n \to \infty$.
\end{definition}

\begin{proposition}\label{prop:mountain_pass}
	There exists $v \in L^{\frac43}(\R \times \T; \R) \setminus \set{0}$ with $J(v) \leq 0$. For such $v$ we define the mountain pass energy level by
	\begin{align*}
		\elmp 
		\coloneqq \elmp(v) 
		\coloneqq \inf_{\substack{\gamma \in C([0, 1]; L^{\frac43}(\R \times \T; \R)) \\ \gamma(0) = 0, \gamma(1) = v}} \, \sup_{t \in [0, 1]} J(\gamma(t)).
	\end{align*} 
	Then $\elmp > 0$ and there exists a Palais-Smale sequence for $J$ at level $\elmp$.
\end{proposition}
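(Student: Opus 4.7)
The statement has three ingredients: existence of a point below the zero level, mountain pass geometry near the origin, and the Palais--Smale conclusion. The plan is to verify the standard mountain pass geometry of Ambrosetti--Rabinowitz for the $C^1$ functional $J$ on the Banach space $L^{\nicefrac{4}{3}}(\R\times\T;\R)$ and then invoke Ekeland's variational principle.

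First, I would check the regularity $J\in C^1(L^{\nicefrac{4}{3}};\R)$. The nonlinear term $v\mapsto \tfrac34\int|v|^{\nicefrac{4}{3}}$ is $C^1$ with derivative $|v|^{-\nicefrac{2}{3}}v$, which is $v^{\nicefrac{1}{3}}$ in the real-valued case, and lies in $L^4$. The quadratic term $v\mapsto -\tfrac12\int \iwop v\cdot v$ is continuous bilinear and hence smooth; here I use \cref{prop:Lp_embedding} together with \cref{lem:wop_defined} to see that $\iwop\colon L^{\nicefrac{4}{3}}\to L^4$ is bounded.

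Next I would verify the geometry at the origin. Boundedness of $\iwop$ gives $\Abs{\int \iwop v\cdot v}\leq C\norm{v}_{\nicefrac{4}{3}}^2$, so
\begin{align*}
	J(v)\geq \tfrac34\norm{v}_{\nicefrac{4}{3}}^{\nicefrac{4}{3}} - \tfrac{C}{2}\norm{v}_{\nicefrac{4}{3}}^{2}
	= \norm{v}_{\nicefrac{4}{3}}^{\nicefrac{4}{3}}\left(\tfrac34 - \tfrac{C}{2}\norm{v}_{\nicefrac{4}{3}}^{\nicefrac{2}{3}}\right).
\end{align*}
Since $\nicefrac{4}{3}<2$, there exist $\rho,\kappa>0$ with $J(v)\geq \kappa$ on the sphere $\norm{v}_{\nicefrac{4}{3}}=\rho$.

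To produce a point $v$ with $J(v)\leq 0$, I use the indefiniteness of $\wop$ from \cref{lem:indefinite}. That lemma furnishes $u_0\in\wdom$ with $\wop[u_0][u_0]>0$. Setting $\psi \coloneqq\iota' h^{\nicefrac{1}{4}}(h^{-\nicefrac{1}{4}}u_0)\in\wdom'$, or more directly choosing a $v_0\in L^{\nicefrac{4}{3}}\cap L^2$ supported on $\rfreq$ for which $w\coloneqq\wop^{-1}\iota'h^{\nicefrac{1}{4}}v_0\neq 0$ satisfies $\wop[w][w]>0$ (achievable by density \cref{lem:dense} and by choosing $v_0$ spectrally aligned with the positive direction of $\wop$ used in the proof of \cref{lem:indefinite}, then multiplied by $h^{-\nicefrac{1}{4}}$ and smoothed), one obtains
\begin{align*}
	\int_{\R\times\T}\iwop v_0\cdot v_0\der(x,t) = \wop[w][w] > 0.
\end{align*}
Then $J(t v_0)=\tfrac34 t^{\nicefrac{4}{3}}\norm{v_0}_{\nicefrac{4}{3}}^{\nicefrac{4}{3}}-\tfrac{t^2}{2}\wop[w][w]\to -\infty$ as $t\to\infty$, so some $v=t_\star v_0$ satisfies $J(v)\leq 0$, and any such $v$ necessarily lies outside the ball of radius $\rho$.

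Finally, any continuous path $\gamma$ from $0$ to such a $v$ must cross the sphere $\{\norm{\impvar}_{\nicefrac{4}{3}}=\rho\}$ by the intermediate value theorem applied to $t\mapsto\norm{\gamma(t)}_{\nicefrac{4}{3}}$, giving $\elmp\geq\kappa>0$. The quantitative mountain pass theorem (Ekeland's variational principle applied to paths, cf.\ the standard deformation argument) then produces a sequence $(v_n)$ with $J(v_n)\to\elmp$ and $J'(v_n)\to 0$ in $(L^{\nicefrac{4}{3}})'=L^4$. The only step requiring real care is the explicit construction of $v_0$ with $\int\iwop v_0\cdot v_0>0$: the argument needs the positive-direction vector built in \cref{lem:indefinite} to be realized as the image under $\wop^{-1}\iota' h^{\nicefrac{1}{4}}$ of some $v_0\in L^{\nicefrac{4}{3}}$, which is where I would spend the most care, using \cref{lem:dense} together with the explicit spectral construction in \cref{lem:indefinite} to approximate.
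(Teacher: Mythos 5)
Your overall architecture coincides with the paper's: the sphere estimate $J(\tilde v)\geq \tfrac34 r^{4/3}-\tfrac12\norm{\iwop}r^2>0$ for small $r$, the use of indefiniteness of $\wop$ to produce $v_0$ with $\int\iwop v_0\cdot v_0>0$ and hence $J(sv_0)\leq 0$ for large $s$, the path-crossing argument for $\elmp>0$, and the mountain pass theorem for the Palais--Smale sequence. All of that is fine.

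The one step you yourself flag as delicate is genuinely where your sketch has a gap, and as written it would not go through. You propose to realize the positive direction $u$ from \cref{lem:indefinite} as $w=\wop^{-1}\iota'h^{1/4}v_0$ by taking $v_0$ to be (a smoothed version of) $h^{-1/4}\wop u$. There are two obstructions. First, for the $u$ built in \cref{lem:indefinite}, the functional $\wop u\in\wdom'$ need not be represented by an $L^{4/3}$-function at all; you must first pass to $\varphi\in\wdense$ close to $u$ (so that $\wop[\varphi][\varphi]>0$ by continuity), for which $\wop\varphi$ is bounded and compactly supported. Second, and more seriously, even then $h^{-1/4}\wop\varphi$ need not lie in $L^{4/3}$: assumption (A1) only gives $h>0$ almost everywhere, with no local lower bound, so multiplication by $h^{-1/4}$ can destroy local integrability, and under (A8a) $h$ decays at infinity. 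The repair is not a pointwise division but an approximation: since $h>0$ a.e., the set $h^{1/4}L^{4/3}$ is dense in $L^{4/3}$, so one can choose $v_n$ with $h^{1/4}v_n\to\wop\varphi$ in $L^{4/3}$ and use continuity of the bounded quadratic form $v\mapsto\int\iota\wop^{-1}\iota'v\cdot v$ on $L^{4/3}$ to conclude $\int\iwop v_n\cdot v_n\to\wop[\varphi][\varphi]>0$. This is exactly the content of the paper's argument, which runs the same density reasoning in contrapositive form: if $\int\iwop v\cdot v\leq 0$ for all $v$, then by density of $h^{1/4}L^{4/3}$ and of $\wdense$ the operator $\wop$ would be negative semidefinite, contradicting \cref{lem:indefinite}. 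With that replacement your proof is complete.
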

\begin{proof}
	We first show that there exists $v \in L^{\frac43}(\R \times \T; \R)$ with $\int_{\R\times\T} \iwop v \cdot v \der (x, t) > 0$. Assume for a contradiction that
	\begin{align*}
		\int_{\R \times \T} \iwop v \cdot v \der (x, t) 
		= \int_{\R \times \T} \iota \wop^{-1} \iota' h^{\frac14} v \cdot h^{\frac14} v \der (x, t)
		\leq 0
	\end{align*}
	for all $v \in L^{\frac43}(\R \times \T; \R)$. As $h$ vanishes almost nowhere, $h^{\frac14} L^{\frac43}(\R \times \T) \subseteq L^{\frac43}(\R \times \T)$ is dense. By approximation it follows that
	\begin{align*}
		\int_{\R \times \T} \iota \wop^{-1} \iota' v \cdot v \der (x, t)
		\leq 0
	\end{align*} 
	holds for all $v \in L^{\frac43}(\R \times \T; \R)$. Next, let $\varphi$ lie in the dense subset $\wdense \subseteq \wdom$ of \cref{lem:dense}.
	Then $v \coloneqq \wop \varphi \in L^\infty(\R \times \T)$ is compactly supported, and we have
	\begin{align*}
		\int_{\R \times \T} \iota \wop^{-1} \iota' v \cdot v \der (x, t)
		= \wop[\varphi][\varphi] 
		\leq 0.
	\end{align*}
	Using density of $\wdense \subseteq \wdom$ we conclude that $\wop$ is negative semidefinite, contradicting \cref{lem:indefinite}. 

	So our assumption was false, and therefore we find $v \in L^{\frac43}(\R \times \T; \R)$ with $\int_{\R\times\T} \iwop v \cdot v \der (x, t) > 0$.
	This implies $J(s v) \leq 0$ for sufficiently large $s$.

	Now let $0 < r < \Bigl( \frac{3}{2 \norm{\iwop}} \Bigr)^{\frac32}$ and $\tilde v \in L^{\frac43}(\R \times \T)$ with $\norm{\tilde v}_{\frac43} = r$. Then
	\begin{align*}
		J(\tilde v) \geq \frac34 r^{\frac43} - \frac12 \norm{\iwop} r^2 > 0
	\end{align*}
	and therefore also $\elmp \geq \frac34 r^{\frac43} - \frac12 \norm{\iwop} r^2 > 0$.
	By the mountain pass theorem (cf. Theorem~6.1, Theorem~3.4, and Remark~3.5 in \cite[Chapter~II]{struwe}) there exists a Palais-Smale sequence $(v_n)$ for $J$ at level $\elmp$.
\end{proof}

\begin{remark}\label{rem:ps_bounded}
	Any Palais-Smale sequence for $J$ is bounded. Indeed, if $(v_n)$ is a Palais-Smale sequence at level $\el$, then 
	\begin{align*}
		2 \el + \landauo(1) + \landauo(\norm{v_n}_{\frac43}) 
		= 2 J(v_n) - J'(v_n)[v_n]
		= \frac{1}{2} \norm{v_n}_{\frac43}^{\frac43},
	\end{align*}
	shows that $(v_n)$ is bounded and moreover that $\norm{v_n}_{\frac43} \to \left( 4 \el \right)^{\frac34}$ as $n \to \infty$.
\end{remark}

Next we show an existence result for the dual problem.

\begin{theorem}\label{thm:existence}
	There exists a ground state of \eqref{eq:dual_wave}.
\end{theorem}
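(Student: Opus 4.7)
My plan is the standard dual variational strategy: produce a bounded Palais-Smale sequence at the mountain pass level, extract a nontrivial weak limit that solves the dual equation, and argue that the energy of this limit equals the ground state level $\elgs$.

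First, from Proposition~\ref{prop:mountain_pass} I take a Palais-Smale sequence $(v_n) \subseteq L^{4/3}(\R \times \T; \R)$ at level $\elmp > 0$; by Remark~\ref{rem:ps_bounded} it is bounded with $\|v_n\|_{4/3}^{4/3} \to 4\elmp$. Setting $u_n \coloneqq \wop^{-1} \iota' h^{1/4} v_n \in \wdom$, the companion sequence $(u_n)$ is bounded in $\wdom$ by Lemma~\ref{lem:wop_defined}. Passing to a subsequence, $v_n \wto v_\infty$ weakly in $L^{4/3}$ and $u_n \wto u_\infty$ weakly in $\wdom$, and by the local compactness of Proposition~\ref{prop:Lp_embedding} also $u_n \to u_\infty$ strongly in $L^4_\loc$ and a.e. Combined with the Palais-Smale relation $v_n^{1/3} - h^{1/4} u_n \to 0$ in $L^4$, this forces $v_\infty = h^{3/4} u_\infty^3$ a.e., so $v_\infty$ is a critical point of $J$.

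The main obstacle is non-triviality of $v_\infty$, which is exactly where assumption (A8) is used. Under \ref{ass:localized_nonlin}, the decay $h(x) \to 0$ combined with local compactness makes $\iwop \colon L^{4/3} \to L^4$ compact: one splits the outer $h^{1/4}$ as $h^{1/4}\bbone_{\abs{x}\leq R} + h^{1/4}\bbone_{\abs{x}>R}$, so that the first factor composed with $\iota$ is compact and the second has small operator norm for $R$ large. Consequently $v_\infty = 0$ would yield $\iwop v_n \to 0$ in $L^4$ and hence $\|v_n\|_{4/3} \to 0$, contradicting $\|v_n\|_{4/3}^{4/3} \to 4\elmp$. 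Under \ref{ass:perturbed_nonlin}, I apply Proposition~\ref{prop:Lions} to $(u_n)$: evanescence is ruled out by the same contradiction, so up to a subsequence there exist shifts $y_n \in X\Z$ such that $u_n(\cdot + y_n, \cdot)$ has $L^4$-mass bounded below on a fixed compact set. Translations by $X \Z$ leave $\wop$, $\iota$, $\iota'$, and $h^\per$ invariant, whereas $h^\loc(\cdot + y_n) \to 0$ locally when $\abs{y_n} \to \infty$. A standard concentration-compactness comparison exploiting $h^\loc \geq 0$ then rules out $\abs{y_n} \to \infty$ (this is the case where the shifted sequence would only produce a solution of the periodic-at-infinity problem, at an energy strictly larger than $\elmp$), leaving a nontrivial weak limit in the original frame.

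Finally, to verify that the nontrivial critical point $v_* \neq 0$ is a ground state, I use a fibering argument. At any nontrivial critical point one has $a \coloneqq \|v\|_{4/3}^{4/3} = \int_{\R \times \T} \iwop v \cdot v \der (x,t) > 0$, so $t \mapsto J(tv) = \tfrac34 t^{4/3} a - \tfrac12 t^2 a$ attains its maximum $\tfrac14 a = J(v)$ at $t = 1$ and becomes negative for some $T > 1$. Testing $\elmp$ along the path $s \mapsto sTv$ gives $\elmp \leq J(v)$, whence $\elmp \leq \elgs$. Conversely, weak $L^{4/3}$-lower semicontinuity of the norm yields $J(v_*) = \tfrac14 \|v_*\|_{4/3}^{4/3} \leq \tfrac14 \liminf \|v_n\|_{4/3}^{4/3} = \elmp$, so $J(v_*) = \elmp = \elgs$ and $v_*$ is a ground state.
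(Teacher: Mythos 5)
Your overall skeleton (mountain pass level $\elmp>0$, bounded Palais--Smale sequence, identification of the weak limit as a critical point via local compactness of $\iota$, compactness of $\iwop$ under \ref{ass:localized_nonlin}) matches the paper's proof, and the case \ref{ass:localized_nonlin} is handled correctly. The genuine gap is in case \ref{ass:perturbed_nonlin} with $h^\loc \neq 0$: you exclude the alternative $\abs{y_n} \to \infty$ by asserting that the shifted sequence would produce a critical point of the periodic functional $J^\per$ ``at an energy strictly larger than $\elmp$''. That strict inequality $\elgs^\per > \elmp$ is exactly what needs to be proved, and it is not automatic. The paper first establishes $\elmp \leq \elgs^\per$ by an explicit comparison: it takes the periodic ground state $v^\per$, sets $v \coloneqq (h^\per/h)^{\frac14} v^\per$, and shows $J(sv) \leq J^\per(s v^\per)$ along the whole ray $s \in [0, s_0]$ (using $h \geq h^\per$ and the sign structure of the two terms), which bounds the mountain pass level by $\max_s J^\per(s v^\per) = \elgs^\per$. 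It then must treat the borderline case $\elmp = \elgs^\per$ separately, where the runaway scenario cannot be excluded by energy comparison; there one shows instead that $v^\per$ vanishes on $\set{h \neq h^\per}$ and is therefore already a critical point (indeed a ground state) of $J$ itself. Your sketch supplies neither the comparison construction nor the equality case, and ``$h^\loc \geq 0$'' alone does not yield strictness.

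A secondary issue: to get $\elmp \leq \elgs$ you test the mountain pass level along the path $s \mapsto sTv$ for an \emph{arbitrary} nontrivial critical point $v$, but $\elmp = \elmp(v_0)$ was defined with a fixed endpoint $v_0$, and mountain pass levels a priori depend on the endpoint; the inequality you actually obtain is $\elmp(Tv) \leq J(v)$, not $\elmp(v_0) \leq J(v)$. The paper avoids this entirely by not comparing $\elmp$ with $\elgs$: once one nontrivial critical point exists (so $\elgs < \infty$), it reruns the same compactness/concentration argument on a minimizing sequence of critical points $J(v_n) \to \elgs$ and uses weak lower semicontinuity of $\tfrac14 \norm{\cdot}_{\nicefrac43}^{\nicefrac43}$ to conclude the limit is a ground state. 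You should adopt that route, or else justify endpoint independence of the mountain pass level.
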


The proof of \cref{thm:existence} differs depending on the choice of assumption: \ref{ass:localized_nonlin} or \ref{ass:perturbed_nonlin}. 
The case \ref{ass:localized_nonlin} is simpler since $J$ satisfies the Palais-Smale condition, and the proof is carried out in \cref{prop:existence:localized}.
Case \ref{ass:perturbed_nonlin} is investigated in \cref{lem:existence:periodic} for purely periodic coefficients $g_0, g_1, h$ using the concentration-compactness principle \cref{prop:Lions}, and in \cref{prop:existence:perturbed} for the general case using energy comparison arguments.

\begin{proposition}\label{prop:existence:localized}
	Assume \ref{ass:first}--\ref{ass:last_nogeom_u} and \ref{ass:localized_nonlin}. Then there exists a ground state of \eqref{eq:dual_wave}.
\end{proposition}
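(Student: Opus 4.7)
The plan is to exploit the decay of $h$ at infinity in assumption \ref{ass:localized_nonlin} to prove that the operator $\iwop$ is compact, which will yield the Palais--Smale condition for $J$ at every level; then I would extract a critical point from the mountain pass sequence of \cref{prop:mountain_pass} and identify it as a ground state via a Nehari-type comparison.

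For the compactness of $\iwop \colon L^{4/3}(\R \times \T; \R) \to L^4(\R \times \T; \R)$, I would set $\chi_R \coloneqq \bbone_{[-R, R]}$ and consider the truncated operators
\begin{align*}
	K_R \coloneqq (h^{1/4} \chi_R) \, \iota \, \wop^{-1} \, \iota' \, (h^{1/4} \chi_R),
\end{align*}
where $\iota$ is the embedding from \cref{prop:Lp_embedding}. Multiplication by $h^{1/4}\chi_R$ restricts the spatial support to $[-R,R]$, so the local compactness of $\iota$ established in \cref{prop:Lp_embedding} combined with boundedness of $\wop^{-1}$ (\cref{lem:wop_defined}) makes each $K_R$ compact from $L^{4/3}$ to $L^4$. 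By \ref{ass:localized_nonlin} we have $\|h^{1/4}(1 - \chi_R)\|_\infty \to 0$ as $R \to \infty$, and the telescoping identity
\begin{align*}
	\iwop - K_R = h^{1/4}(1 - \chi_R) \, \iota \wop^{-1} \iota' \, h^{1/4} + (h^{1/4}\chi_R) \, \iota \wop^{-1} \iota' \, h^{1/4}(1-\chi_R)
\end{align*}
then yields $\|\iwop - K_R\|_{L^{4/3} \to L^4} \to 0$. Hence $\iwop$ is compact as a norm limit of compact operators.

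Now let $(v_n)$ be a Palais--Smale sequence at level $\elmp$ from \cref{prop:mountain_pass}; by \cref{rem:ps_bounded} it is bounded in $L^{4/3}$ with $\|v_n\|_{4/3} \to (4\elmp)^{3/4} > 0$. After extracting a subsequence, $v_n \wto v$ in $L^{4/3}$, and compactness of $\iwop$ gives $\iwop v_n \to \iwop v$ strongly in $L^4$. The PS condition $v_n^{1/3} - \iwop v_n \to 0$ in $L^4$ then forces $v_n^{1/3} \to \iwop v$ in $L^4$, and continuity of $w \mapsto w^3$ from $L^4$ to $L^{4/3}$ (by Hölder) implies $v_n \to (\iwop v)^3$ strongly in $L^{4/3}$. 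Uniqueness of weak limits gives $v = (\iwop v)^3$, so $v^{1/3} = \iwop v$, i.e., $v$ is a nontrivial critical point of $J$ with $J(v) = \elmp$.

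To conclude that $v$ is a ground state, I would work with the Nehari set $\calN \coloneqq \set{u \in L^{4/3}(\R \times \T; \R) \setminus \set{0} \colon J'(u)[u] = 0}$. Every nontrivial critical point of $J$ lies in $\calN$ and satisfies $J(u) = \tfrac14 \|u\|_{4/3}^{4/3}$, so $\elgs = \inf_\calN J$. For any $u \in \calN$ the ray $t \mapsto tu$ satisfies $J(tu) = \|u\|_{4/3}^{4/3}(\tfrac34 t^{4/3} - \tfrac12 t^2)$, which is maximized at $t = 1$ with value $J(u)$ and negative for $t > (\tfrac{3}{2})^{3/2}$. Choosing the MP endpoint in \cref{prop:mountain_pass} along such a ray through a near-minimizer of $J$ on $\calN$ yields mountain pass values bounded above by $\elgs + \eps$ for any $\eps > 0$, and the compactness-based argument above then produces a critical point at each such level; a diagonal subsequence argument finally gives a critical point at level $\elgs$, which is a ground state. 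The main obstacle is establishing compactness of $\iwop$; once this is in hand the Palais--Smale condition and the subsequent Nehari-style identification of the mountain pass level with $\elgs$ are standard.
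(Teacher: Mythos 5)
Your argument is correct, and its core — compactness of $\iwop$ via the decay of $h$ combined with local compactness of $\iota$, followed by the standard Palais--Smale extraction $v_n^{1/3}\to\iwop v$ in $L^4$, hence $v_n\to(\iwop v)^3$ in $L^{4/3}$ — is exactly the paper's argument; your explicit truncation $K_R$ and the norm-limit identity just spell out what the paper states in one line. Where you diverge is the final identification of a ground state. The paper's route is shorter: once one nonzero critical point exists, $\elgs$ is finite, and any minimizing sequence of critical points is automatically a Palais--Smale sequence at level $\elgs$ (critical points have $J'=0$ by definition), so the very same compactness argument applies to it and yields a ground state directly, with nontriviality of the limit coming from $\norm{v_n}_{4/3}^{4/3}=4J(v_n)\to 4\elgs>0$. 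Your Nehari detour also works, but note two points. First, your claim $\elgs=\inf_\calN J$ is not justified as stated — ``every nontrivial critical point lies in $\calN$'' only gives $\elgs\geq\inf_\calN J$, since not every point of $\calN$ is critical; fortunately this one-sided inequality is all your argument actually uses (it is what bounds the mountain pass values by $\elgs+\eps$). Second, the diagonal step at the end still reduces to applying the compactness argument to a sequence of critical points with energies tending to $\elgs$, i.e., to the paper's argument — so the Nehari machinery and the re-running of the mountain pass at levels near $\elgs$ buy you nothing here and could be deleted.
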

\begin{proof}
	Let $(v_n)$ be the Palais-Smale sequence from \cref{prop:mountain_pass}. 
	By \cref{rem:ps_bounded}, up to a subsequence which we again label by $v_n$, there exists $v \in L^{\frac43}(\R \times \T; \R)$ with $v_n \wto v$ in $L^{\frac43}(\R \times \T)$.

	By \cref{prop:Lp_embedding} the embedding $\iota \colon \wdom \embeds L^4(\R \times \T; \R)$ is locally compact. Then $h^{\frac14} \iota$ is compact since $h$ decays to $0$ at $\pm\infty$ by assumption~\ref{ass:localized_nonlin}, and in particular $\iwop$ is compact and thus $\iwop v_n \to \iwop v$ in $L^4$. Using
	\begin{align*}
		J'(v_n) = v_n^{\frac13} - \iwop v_n \to 0\text{ in }L^4
	\end{align*}
	we see that $v_n^{\frac13}$ converges to $\iwop v$ in $L^4$, which implies $v_n \to (\iwop v)^3$ in $L^{\frac43}$. 
	
	Since also $v_n \wto v$, we have $v = (\iwop v)^3$ and $v_n \to v$ in $L^{\frac43}$, so $v^{\frac13} - \iwop v = 0$ in $L^4$. By continuity of $J$ we have $J(v_n) \to J(v)$, i.e., $J(v) = \elmp$.

	Thus far, we have shown existence of a nonzero critical point of $J$, and thus $\elgs \neq \infty$.
	Now let $(v_n)$ be a sequence of critical points of $J$ with $J(v_n) \to \elgs$. Then the above arguments shows $v_n \to v$ in $L^{\frac43}$ up to a subsequence, and hence $v$ is a ground state.
\end{proof}

\begin{lemma}\label{lem:existence:periodic}
	Assume \ref{ass:first}--\ref{ass:last_nogeom_u} and \ref{ass:perturbed_nonlin} with $h^\loc = 0$. Then there exists a ground state of \eqref{eq:dual_wave}.
\end{lemma}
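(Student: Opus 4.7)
The plan is to use a concentration-compactness argument that exploits the $X$-periodic shift invariance of $J$, which holds under \ref{ass:perturbed_nonlin} with $h^\loc = 0$, in order to overcome the lack of global compactness of $\wdom \embeds L^4(\R \times \T)$. I will first produce at least one nonzero critical point of $J$, and then rerun the same argument on a minimizing sequence of such critical points to attain $\elgs$.

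First I would take the Palais-Smale sequence $(v_n)$ at level $\elmp > 0$ given by \cref{prop:mountain_pass} and lift it to $u_n \coloneqq \wop^{-1} \iota' h^{\frac14} v_n \in \wdom$, so that $\iwop v_n = h^{\frac14} \iota u_n$ and $J'(v_n) = v_n^{\frac13} - h^{\frac14} \iota u_n$. By \cref{rem:ps_bounded} and continuity of $\wop^{-1}$, $(v_n)$ is bounded in $L^{\frac43}(\R \times \T)$ and $(u_n)$ is bounded in $\wdom$. Vanishing of $(u_n)$ is then ruled out by contradiction: if $\sup_{x \in \R} \int_{[x - r, x + r] \times \T} \abs{u_n}^4 \der (x, t) \to 0$ for some $r > 0$, \cref{prop:Lions} with $w \equiv 1$ and $p = 4$ would yield $u_n \to 0$ in $L^4$, hence $\iwop v_n = h^{\frac14} \iota u_n \to 0$ in $L^4$; combined with $J'(v_n) \to 0$ this forces $v_n \to 0$ in $L^{\frac43}$, contradicting $\norm{v_n}_{\frac43} \to (4\elmp)^{\frac34} > 0$.

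Consequently there are $\eta, r > 0$ and $x_n \in \R$ with $\int_{[x_n - r, x_n + r] \times \T} \abs{u_n}^4 \der(x, t) \geq \eta$. By the common $X$-periodicity of $g_0, g_1, h$, the operators $\wop, \iwop$ and the functional $J$ commute with the translations $\tau_{kX} \colon \varphi(x, t) \mapsto \varphi(x - kX, t)$ for $k \in \Z$. I would pick $k_n \in \Z$ with $\abs{x_n - k_n X} \leq X$ and replace $v_n, u_n$ by their $\tau_{k_n X}$-translates so that the concentration occurs on the fixed interval $[-X - r, X + r]$. Passing to a subsequence, $v_n \wto v^\star$ in $L^{\frac43}$ and $u_n \wto u^\star = \wop^{-1} \iota' h^{\frac14} v^\star$ in $\wdom$. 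Local compactness of $\wdom \embeds L^4$ (\cref{prop:Lp_embedding}) upgrades the latter to strong $L^4$-convergence on the concentration interval, so $u^\star \neq 0$ and hence $v^\star \neq 0$. Rewriting $v_n^{\frac13} = J'(v_n) + h^{\frac14} \iota u_n$ gives $v_n^{\frac13} \to \iwop v^\star$ in $L^4_\loc$ and therefore $v_n \to (\iwop v^\star)^3$ in $L^{\frac43}_\loc$; combined with $v_n \wto v^\star$ globally this identifies $v^\star = (\iwop v^\star)^3$, i.e. $J'(v^\star) = 0$.

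This already shows $\elgs < \infty$; in fact $\elgs > 0$, since at any nonzero critical point $v$ the identities $J(v) = \tfrac14 \norm{v}_{\frac43}^{\frac43}$ and $\norm{v}_{\frac43}^{\frac43} = \int v \cdot \iwop v \leq \norm{\iwop} \norm{v}_{\frac43}^2$ yield $\norm{v}_{\frac43} \geq \norm{\iwop}^{-3/2}$. To attain $\elgs$, I would then take a minimizing sequence of nonzero critical points $(v_n)$ with $J(v_n) \to \elgs$ and rerun the shift-and-extract argument verbatim — vanishing is again precluded by $\norm{v_n}_{\frac43} \to (4\elgs)^{\frac34} > 0$ — so the resulting nonzero critical point $v^\star$ satisfies $J(v^\star) = \tfrac14 \norm{v^\star}_{\frac43}^{\frac43} \leq \liminf \tfrac14 \norm{v_n}_{\frac43}^{\frac43} = \elgs$ by weak lower semicontinuity of the $L^{\frac43}$-norm, hence is a ground state. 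The main technical hurdle is precisely the passage to the limit in the nonlinear identity $v_n^{\frac13} = \iwop v_n$ (up to PS error), since $v \mapsto v^{\frac13}$ is not weakly continuous; the remedy is to route the nonlinearity through the auxiliary $\wdom$-variable $u_n$, so that the nonlinear map acts on the weakly-converging side while the smoothing factor $\iwop$ is evaluated on the locally strongly-converging side.
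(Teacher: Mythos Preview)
Your argument is correct and follows essentially the same concentration-compactness strategy as the paper: rule out vanishing via \cref{prop:Lions}, translate by integer multiples of the common period $X$, pass to a weak limit, identify it as a nonzero critical point through local compactness of $\iwop$, and then rerun the scheme on a minimizing sequence of critical points using weak lower semicontinuity of $\norm{\cdot}_{\frac43}$.

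The one place where the paper invests more effort than you do is the assertion that $\tau_{kX}$ commutes with $\wop$ (and hence with $\wop^{-1}$, $\iwop$, and is an isometry of $\wdom$). This is not immediate from the definitions, because both $\wop_0$ and the $\wdom$-norm are given through the spectral transform $T$ of \cref{thm:functional_calc}, and $T$ does \emph{not} commute with $\tau$: one has $T[\tau f](\lambda) = M(\lambda)\,T[f](\lambda)$ for a $\lambda$-dependent monodromy matrix $M(\lambda)$. The paper's Part~1 verifies that multiplication by $M(\lambda)$ is an isometry of $L^2(\mu)$ (because $\tau$ is an $L^2_V$-isometry and $T$ is unitary), and since the scalar symbol $\bigl|\tfrac{\lambda-\omega^2k^2}{\omega^2k^2\hat\calN_k}\bigr|$ commutes with $M(\lambda)$, the $\wdom$-norm and $\wop_0$ are preserved; periodicity of $\calG$ and $h$ then handle $\wop_1$ and $\iwop$. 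Your proof would be complete once this step is supplied; everything else matches the paper, with only cosmetic differences (you take $w\equiv 1$ in \cref{prop:Lions} instead of $w=h$, and you add the harmless observation $\elgs>0$).
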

\begin{proof}
	\textit{Part 1:}
	Denote the common period of $h = h^\per, g_0, g_1$ by $X$, so $V$ is also $X$-periodic by its definition \eqref{eq:def:V}. Let us investigate the shift $\tau$ in $x$ by $X$, i.e., $\tau[f](x) = f(x - X)$. With the spectral transform $T$ and fundamental solution $\Psi$ given in \cref{thm:functional_calc}, we calculate for compactly supported $f \in L^2(\R; \C)$
	\begin{align*}
		T[\tau f](\lambda) 
		= \int_\R f(x - X) \Psi(x; \lambda) \der x
		= \int_\R f(x) \Psi(x + X; \lambda) \der x
		= M(\lambda) T[f](\lambda)
	\end{align*}
	where the matrix $M(\lambda) \in \R^{2 \times 2}$ is given by
	\begin{align*}
		M(\lambda) \Psi(x; \lambda) = \Psi(x + X; \lambda).
	\end{align*}
	for all $x \in \R$, and it exists since $\Psi(\impvar; \lambda)$ solves an $X$-periodic differential equation.
	As $\tau$ is an isometric isomorphism on $L^2_V(\R; \C)$, multiplication with $M(\lambda)$ is an isometric isomorphism of $L^2(\mu)$. For $u \in \wdom$ we have
	\begin{align*}
		\norm{\tau u}_\wdom^2 
		&= \sum_{k \in \rfreq} \norm{ \abs{\frac{\lambda - \omega^2 k^2}{\omega^2 k^2 \hat \calN_k}}^{\frac12} T[\tau \hat u_k](\lambda)}_{L^2(\mu)}^2
		\\ &= \sum_{k \in \rfreq} \norm{ M(\lambda) \abs{\frac{\lambda - \omega^2 k^2}{\omega^2 k^2 \hat \calN_k}}^{\frac12} T[\hat u_k](\lambda)}_{L^2(\mu)}^2
		\\ &= \sum_{k \in \rfreq} \norm{ \abs{\frac{\lambda - \omega^2 k^2}{\omega^2 k^2 \hat \calN_k}}^{\frac12} T[\hat u_k](\lambda)}_{L^2(\mu)}^2
		= \norm{u}_\wdom^2,
	\end{align*}
	i.e., $\tau$ is an isometric isomorphism of $\wdom$. A similar calculation shows $\wop_0[\tau u][\tau u] = \wop_0[u][u]$, and $\wop_1[\tau u][\tau u] = \wop_1[u][u]$ follows directly from periodicity of $\calG$. Thus we have $\wop[\tau u][\tau u] = \wop[u][u]$. 
	Let us define $\tau$ on $\wdom'$ by $\tau \vert_{\wdom'} = \left(\left( \tau \vert_{\wdom} \right)^{-1}\right)'$, which is an isometric isomorphism of $\wdom'$. Then by the above, $\tau \wop = \wop \tau$, $\tau \wop^{-1} = \wop^{-1} \tau$ and $\tau \iwop = \iwop \tau$ hold.

	\textit{Part 2:} 
	Let $(v_n)$ be the Palais-Smale sequence given by \cref{prop:mountain_pass}. 
	We apply \cref{prop:Lions} with $r = X$, $p = q = 4$, $w = h$ and $u_n \coloneqq \wop^{-1} \iota' h^{\frac14} v_n = h^{-\frac14} \iwop v_n$ to obtain a sequence of points $x_n \in \R$ with 
	\begin{align*}
		\limsup_{n \to \infty} \norm{u_n}_{L^4_h([x_n - X, x_n + X] \times \T)} 
		= \limsup_{n \to \infty} \norm{\iwop v_n}_{L^4([x_n - X, x_n + X] \times \T)} 
		> 0.
	\end{align*}
	since 
	\begin{align*}
		\norm{u_n}_{L^4_h(\R \times \T)}
		= \norm{\iwop v_n}_{L^4(\R \times \T)}
		= \norm{v_n^3}_{L^4(\R \times \T)} + \landauo(1)
		\to (4 \elmp)^{\frac94} > 0
	\end{align*}
	by \cref{rem:ps_bounded}.
	
	W.l.o.g. we may assume $x_n = k_n X$ for some $k_n \in \Z$. Then, $\tilde v_n \coloneqq \tau^{k_n} v_n$ satisfies
	\begin{align*}
		\norm{\iwop v_n}_{L^{4}([x_n - X, x_n + X] \times \T)}
		= \norm{\tau^{-k_n} \iwop \tau^{k_n} v_n}_{L^{4}([x_n - X, x_n + X] \times \T)}
		= \norm{\iwop \tilde v_n}_{L^{4}([- X, X] \times \T)}.
	\end{align*}
	Now choose a subsequence, again denoted by $\tilde v_n$ such that $\tilde v_n \wto v$ in $L^{\frac43}(\R \times \T)$ and
	\begin{align*}
		\norm{\iwop \tilde v_n}_{L^4([-X, X] \times \T)} \to s > 0
	\end{align*}
	By local compactness of $\iwop$ (see \cref{prop:Lp_embedding}) 
	we have $\iwop \tilde v_n \to \iwop v$ in $L^4_\loc(\R \times \T)$ and thus $v \neq 0$.
	In addition, 
	\begin{align*}
		\int_{\R \times \T} \left(\tilde v_n^{\frac13} - \iwop \tilde v_n \right)\varphi \der (x, t)
		= \int_{\R \times \T} \left(v_n^{\frac13} - \iwop v_n \right) \tau^{-k_n} \varphi \der (x, t)
		= 
		J'(v_n)[\tau^{-k_n} \varphi] = \landauo(\norm{\varphi}_{\frac43})
	\end{align*}
	as $n \to \infty$ for $\varphi \in L^{\frac43}(\R \times \T)$. This shows $\tilde v_n^{\frac13} \to \iwop v$ in $L^4_\loc(\R \times \T; \R)$ and thus $\tilde v_n \to (\iwop v)^3$ in $L^{\frac43}_\loc(\R \times \T; \R)$. Therefore $v = (\iwop v)^3$ holds, that is, $v$ is a critical point of $J$. In particular, we have $\elgs \neq \infty$.

	\textit{Part 3:} 
	Now let $(v_n)$ be a sequence of critical points of $J$ with $J(v_n) \to \elgs$. Arguing as in part~2, up to a subsequence and shifts $x_n = k_n X \in \R$ we have $\tilde v_n \coloneqq \tau^{k_n} v_n \wto v$, where $v$ is a nonzero critical point of $J$.
	For the energy level of $v$ we calculate
	\begin{align*}
		J(v)
		= J(v) - \frac12 J'(v)[v]
		= \frac14 \norm{v}_{\frac43}^{\frac43}
		\leq \liminf_{n \to \infty} \frac14 \norm{\tilde v_n}_{\frac43}^{\frac43}
		= \liminf_{n \to \infty} J(\tilde v_n) - \frac12 J'(\tilde v_n)[\tilde v_n]
		= \elgs.
	\end{align*}
	Since also $J(v) \geq \elgs$ by definition of the ground state energy, we see that $v$ is a ground state.
\end{proof}

\begin{proposition}\label{prop:existence:perturbed}
	Assume \ref{ass:first}--\ref{ass:last_nogeom_u} and \ref{ass:perturbed_nonlin}. Then there exists a ground state of \eqref{eq:dual_wave}.
\end{proposition}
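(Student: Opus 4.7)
The plan is to adapt the proof of \cref{lem:existence:periodic}, adding an energy--comparison argument that rules out vanishing along Palais--Smale sequences caused by the localized perturbation $h^\loc$. Let $J^\per$, $\iwop[h^\per]$, and $\elmp^\per$ denote the dual functional, weighted inverse operator, and mountain pass level associated with $h^\per$ in place of $h$; by \cref{lem:existence:periodic} there exists a periodic ground state $v^\per$ with $J^\per(v^\per) = \elgs^\per = \elmp^\per > 0$, to which corresponds a primal ground state $u^\per = (h^\per)^{-\nicefrac14}(v^\per)^{\nicefrac13} \in \wdom$.

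The crucial step is the strict inequality $\elmp < \elmp^\per$. Since $h = h^\per + h^\loc \geq h^\per$ pointwise, the primal functionals satisfy $\tilde J(u) = \tilde J^\per(u) - \tfrac14 \int_{\R \times \T} h^\loc u^4 \der (x,t) \leq \tilde J^\per(u)$, with strict inequality whenever $\int h^\loc u^4 > 0$. Using the shift isometry $\tau$ of \cref{lem:existence:periodic}, which leaves $J^\per$ invariant, the translates $\tau^k u^\per$ are again primal periodic ground states, and I would argue that for some $k^* \in \Z$ one has $\int h^\loc (\tau^{k^*} u^\per)^4 \der(x,t) > 0$ (this is where the hypothesis that $h^\loc \not\equiv 0$ enters, combined with the richness of the $X\Z$-orbit of $u^\per$). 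Maximizing $s \mapsto \tilde J(s \tau^{k^*} u^\per)$ over $s \geq 0$ and translating to $L^{\nicefrac43}$ via $s \mapsto h^{\nicefrac34}(s \tau^{k^*} u^\per)^3$ yields a continuous mountain--pass path for $J$ ending in the sublevel set $\set{J < 0}$ with maximum energy strictly below $\elmp^\per$.

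Next, I would take a Palais--Smale sequence $(v_n)$ for $J$ at level $\elmp$ from \cref{prop:mountain_pass}, bounded in $L^{\nicefrac43}$ by \cref{rem:ps_bounded}, and pass to a weakly convergent subsequence $v_n \wto v$. Splitting $h^{\nicefrac14} = (h^\per + h^\loc)^{\nicefrac14}$ and using that $(h^\loc)^{\nicefrac14}\iota$ is compact (by the decay of $h^\loc$) while the remaining periodic part is only locally compact, the argument in Part~2 of \cref{lem:existence:periodic} shows that $v$ is a critical point of $J$. If $v = 0$, the compact contribution vanishes and $(v_n)$ becomes asymptotically a Palais--Smale sequence for $J^\per$ at level $\elmp$; the translation argument from \cref{lem:existence:periodic} then produces shifts $k_n \in \Z$ and a nonzero critical point $\tilde v$ of $J^\per$ as the weak limit of $\tau^{k_n} v_n$, giving
\begin{align*}
	\elmp^\per \leq J^\per(\tilde v) = \tfrac14 \norm{\tilde v}_{\nicefrac43}^{\nicefrac43} \leq \liminf_n \tfrac14 \norm{v_n}_{\nicefrac43}^{\nicefrac43} = \elmp,
\end{align*}
contradicting the strict inequality from step one. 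Hence $v \neq 0$, and a ground state is obtained by rerunning the same Palais--Smale analysis on a minimizing sequence of critical points, using strict inequality once more to exclude shift--escape.

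The main obstacle is the strict comparison $\elmp < \elmp^\per$, whose delicate point is selecting a shift $\tau^{k^*}$ of the periodic ground state with nontrivial overlap against $h^\loc$; once this is secured, the remaining steps are a direct extension of the techniques of \cref{prop:existence:localized,lem:existence:periodic}.
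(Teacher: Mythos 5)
Your overall architecture (compare with the periodic problem, use the periodic ground state to bound the mountain pass level, rule out vanishing of the Palais--Smale sequence by an energy comparison after shifts) is the same as the paper's. However, there is a genuine gap at the step you yourself flag as the crucial one: the strict inequality $\elmp < \elmp^\per$. It need not hold, and your sketch does not prove it. First, \ref{ass:perturbed_nonlin} does not exclude $h^\loc \equiv 0$, in which case equality is immediate. More seriously, even for $h^\loc \not\equiv 0$ your argument requires a shift $k^*$ with $\int_{\R\times\T} h^\loc\,(\tau^{k^*}u^\per)^4 \der(x,t) > 0$, i.e.\ that some $X\Z$-translate of the set where $u^\per \neq 0$ meets $\set{h^\loc > 0}$ in positive measure. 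There is no tool available to guarantee this: $\wop$ is a strongly indefinite, temporally nonlocal operator with no maximum principle or unique continuation property, so $u^\per$ may in principle vanish on $\set{h^\loc>0}$ for every integer shift (e.g.\ if the $x$-projection of $\supp u^\per$ sits inside one part of the period cell and $h^\loc$ is supported in the complementary part). "Richness of the orbit" is exactly the assertion that needs proof, and it is false in general.

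The paper sidesteps this by proving only the non-strict bound $\elmp \leq \elgs^\per$ (via the substitution $v=(h^\per/h)^{1/4}v^\per$ and the pointwise inequality $h^\per\le h$) and then splitting into cases. In the equality case $\elmp=\elgs^\per$, the comparison being an equality forces $v^\per=0$ a.e.\ on $\set{h\neq h^\per}$, and one checks directly that $v^\per$ is then already a critical point of the perturbed functional $J$ — so no strict inequality is needed there. Only in the case $\elmp<\elgs^\per$ does the contradiction argument you describe (vanishing weak limit $\Rightarrow$ shifted Palais--Smale sequence for $J^\per$ $\Rightarrow$ a critical point of $J^\per$ at level $\le\elmp<\elgs^\per$) come into play. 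Your Parts on weak convergence, the compactness of $(h^\loc)^{1/4}\iota$, and the final passage from "some nonzero critical point" to "ground state" are consistent with the paper; to repair the proposal you should replace the claimed strict inequality by the non-strict one and add the equality-case analysis.
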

\begin{proof}
	\textit{Part 1:}
	We consider the periodic functional 
	\begin{align*}
		J^\per(v) = \int_{\R \times \T} \tfrac34 \abs{v}^{\frac43} - \tfrac12 \iwop[h^\per] v \cdot v \der (x, t)
	\end{align*}
	on $L^{\frac43}(\R \times \T)$ which has a ground state $v^\per$ due to \cref{lem:existence:periodic}. We denote its energy by $\elgs^\per \coloneqq J^\per(u^\per)$.  Recall that $0 < h^\per \leq h^\per + h^\loc = h$ holds by assumption~\ref{ass:perturbed_nonlin}. Setting $v \coloneqq \left(\frac{h^\per}{h}\right)^{\frac14} v^\per$, we estimate
	\begin{align*}
		J(s v) &= \int_{\R \times \T} \frac{3}{4} s^{\frac43} \abs{v}^{\frac43} - \frac12 s^2 \iwop v \cdot v \der (x, t)
		\\* &= \int_{\R \times \T} \frac{3}{4} s^{\frac43} \left(\tfrac{h^\per}{h}\right)^{\frac13} \abs{v^\per}^{\frac43} - \frac12 s^2 \iwop[h^\per] v^\per \cdot v^\per \der (x, t)
		\\* &\leq J^\per(s v^\per)
	\end{align*}
	for any $s > 0$. For the particular value $s_0 \coloneqq \left(\frac{3}{2}\right)^{\frac32}$ we calculate
	\begin{align*}
		J^\per(s_0 v^\per) = \frac{27}{16} (J^\per)'(v^\per)[v^\per] = 0.
	\end{align*}
	In particular, we can estimate the mountain pass energy via
	\begin{align*}
		\elmp \coloneqq \elmp(s_0 v) \leq \max_{s \in [0, s_0]} J(s v)
		\leq \max_{s \in [0, s_0]} J^\per(s v^\per) = J^\per(v^\per) \eqqcolon \elgs^\per.
	\end{align*}

	\textit{Part 2:} Let us first consider the case $\elmp = \elgs^\per$. From the above (in)equality we conclude that $v^\per = 0$ holds almost everywhere on $\set{h \neq h^\per}$. In particular, we have $v = v^\per$, and from
	\begin{align*}
		\left(v^\per\right)^{\frac13} - \iwop[h^\per] v^\per = 0
	\end{align*}
	we see that $\iwop[h^\per] v^\per$ also vanishes on $\set{h \neq h^\per}$. Therefore
	\begin{align*}
		\left(v\right)^{\frac13} - \iwop[h] v
		= \left(v^\per\right)^{\frac13} - \iwop[h^\per]
		= 0,
	\end{align*}
	and $v = v^\per$ is a ground state of both $J$ and $J^\per$.

	\textit{Part 3:} Let us now consider the case $\elmp < \elgs^\per$. Let $v_n$ be a Palais-Smale sequence for $J$ with $J(v_n) \to \elmp$. Then up to a subsequence $v_n \wto v \in L^{\frac43}(\R \times \T)$ by \cref{rem:ps_bounded}. 
	In order to show $v \neq 0$, let us assume for a contradiction that $v = 0$ so that $\iwop v_n \to 0$ in $L^4_\mathrm{loc}$ by \cref{prop:Lp_embedding}.
	As in the proof of \cref{lem:existence:periodic} there exists a subsequence, again denoted by $v_n$, such that suitable shifts $\tilde v_n = \tau^{k_n} v_n$ weakly converge to some $0 \neq \tilde v \in L^{\frac43}(\R \times \T; \R)$. 
	From $v_n^3 = \iwop v_n + \landauo(1) \to 0$ in $L^4_\loc$ we conclude $\abs{k_n} \to \infty$.
	For compactly supported $\varphi \in L^{\frac43}(\R \times \T; \R)$ we calculate
	\begin{align*}
		&\abs{(J^\per)'(v_n)[\tau^{-k_n} \varphi] - J'(v_n)[\tau^{-k_n} \varphi]}
		\\ &\quad= \abs{\int_{\R \times \T} \left( \iwop[h] - \iwop[h^\per] \right) v_n \cdot \tau^{-k_n} \varphi \der (x, t)}
		\\ &\quad= \abs{\int_{\R \times \T} \left( \bigl(h^{\frac14} - \left(h^\per\right)^{\frac14}\bigr) \iota\wop^{-1}\iota' h^{\frac14} + \left(h^\per\right)^{\frac14} \iota\wop^{-1}\iota' \bigl(h^{\frac14} - \left(h^\per\right)^{\frac14}\bigr)\right) v_n \cdot \tau^{-k_n} \varphi \der(x, t)}
		\\ &\quad\leq \norm{\iota}^2 \norm{\wop^{-1}} \left( 
			\norm{h^{\frac14} v_n}_{\frac43} 
			\norm{\bigl(h^{\frac14} - \left(h^\per\right)^{\frac14}\bigr) \tau^{-k_n}\varphi}_{\frac43} 
			+ \norm{\bigl(h^{\frac14} - \left(h^\per\right)^{\frac14}\bigr) v_n}_{\frac43} 
			\norm{\left(h^\per\right)^{\frac14} \tau^{-k_n} \varphi}_{\frac43} 
		\right)
		\\ &\quad\lesssim 
		\norm{v_n}_{\frac43} 
		\norm{(h^\loc)^{\frac14} \tau^{-k_n} \varphi}_{\frac43} 
		+ \norm{(h^\loc)^{\frac14} v_n}_{\frac43} 
		\norm{\varphi}_{\frac43} 
		\to 0
	\end{align*}
	as $n \to \infty$ since $h^\loc$ is localized by \ref{ass:perturbed_nonlin}. Thus we have
	\begin{align*}
		\int_{\R \times \T} \tilde v_n^{\frac13} \varphi - \iwop[h^\per] \tilde v_n \cdot \varphi \der (x, t)
		= (J^\per)'(\tilde v_n)[\varphi]
		= (J^\per)'(v_n)[\tau^{-k_n} \varphi] \to 0.
	\end{align*}
	Using $\iwop[h^\per] \tilde v_n \to \iwop[h^\per] \tilde v$ in $L^4_\loc$, the above shows $\tilde v_n \to (\iwop[h^\per] \tilde v)^3$ in $L^{\frac43}_\loc$, so that $\tilde v^{\frac13} - \iwop[h^\per] \tilde v = 0$. As $\tilde v$ is a nonzero solution to the periodic problem, we have
	\begin{align*}
		\elgs^\per 
		&\leq J^\per(\tilde v)
		= J^\per(\tilde v) - 2 (J^\per)'(\tilde v)[\tilde v]
		= \frac14 \norm{\tilde v}_{\frac43}^{\frac43}
		\\ &\leq \liminf_{n \to \infty} \frac14 \norm{\tilde v_n}_{\frac43}^{\frac43}
		= \liminf_{n \to \infty} J(v_n) - 2 J'(v_n)[v_n] 
		= \elmp,
	\end{align*}
	a contradiction.
	
	So we have shown $v_n \wto v \neq 0$. By testing with compactly supported $\varphi$ as above, we see that $v$ solves $v^{\frac13} - \iwop v = 0$ and $J(v) \leq \liminf_{n \to \infty} J(v_n) = \elmp$ holds. 
	So we have found a critical point $v$ with $J(v) \leq \elmp < \elgs^\per$. 
	
	\textit{Part 4:}
	By parts 2 or 3 we have $\elgs \leq \elgs^\per$. We then choose $v_n$ to be a Palais-Smale sequence with $J(v_n) \to \elgs$. Repeating the arguments of parts~2~or~3 we obtain a nonzero critical point $v$ of $J$ satisfying $J(v) \leq \elgs$. By definition of the ground state energy, $v$ is a ground state.
\end{proof}

Lastly, we discuss a multiplicity result for solutions.
\begin{proposition}\label{prop:multiplicity}
	Assume in addition that the set $\rfreq$ is infinite. Then there exist infinitely many solutions of \eqref{eq:dual_wave} that are not spatiotemporal shifts of each other.
\end{proposition}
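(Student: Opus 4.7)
The plan is, for each $k \in \rfreq$, to produce a nonzero critical point of $J$ whose temporal Fourier support lies in $k \Zodd$, and then to distinguish the resulting family of solutions by their minimal temporal antiperiods. The key device is the isometric involution
\begin{align*}
	S_k \colon L^{\frac43}(\R \times \T; \R) \to L^{\frac43}(\R \times \T; \R), \quad (S_k v)(x, t) \coloneqq -v(x, t + \tfrac{T}{2k}),
\end{align*}
whose fixed-point set is precisely $L^{\frac43}_k \coloneqq \set{v \colon \hat v_j = 0 \text{ for all } j \notin k\Zodd}$ (functions satisfying $v(x, t + \frac{T}{2k}) = -v(x,t)$). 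Since by \eqref{eq:def:wop} the operator $\wop$ is a sum of products of $x$-operators with temporal Fourier multipliers, and $h$ depends only on $x$, the operator $\iwop$ commutes with the time shift $t \mapsto t + \frac{T}{2k}$; hence $J \circ S_k = J$, and by Palais's principle of symmetric criticality applied to the $\Z/2$-action generated by $S_k$, every critical point of $J$ restricted to $\mathcal{H}_k \coloneqq \wdom \cap L^{\frac43}_k$ is a critical point of $J$ on the full space.

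The next step is to rerun the existence argument of \cref{thm:existence} inside $\mathcal{H}_k$. All structural ingredients transfer: $\wop$ preserves $\mathcal{H}_k$ and remains symmetric, invertible via the Neumann series of \cref{lem:wop_defined}, and indefinite on $\mathcal{H}_k$ (applying the construction in \cref{lem:indefinite} with the particular index $k$ itself, which lies in $k\Zodd \cap \rfreq$). The $L^p$-embeddings of \cref{prop:Lp_embedding} and the concentration-compactness principle \cref{prop:Lions} descend to $\mathcal{H}_k$, and the shift-based arguments of \cref{lem:existence:periodic,prop:existence:perturbed} for case~\ref{ass:perturbed_nonlin} go through unchanged, since the spatial shift $\tau$ preserves $\mathcal{H}_k$. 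The mountain pass and energy-comparison machinery thus yields a nonzero critical point $v_k \in \mathcal{H}_k$ of $J\vert_{\mathcal{H}_k}$, which lifts to a critical point of $J$ on $L^{\frac43}(\R \times \T; \R)$ by symmetric criticality.

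Finally, every $v_k \in L^{\frac43}_k$ is $\frac{T}{2k}$-antiperiodic in $t$, so its minimal temporal antiperiod is at most $\frac{T}{2k}$. Both spatial and temporal shifts act trivially on the set of nonzero temporal Fourier coefficients and therefore preserve the minimal antiperiod. Given a strictly increasing sequence $k_1 < k_2 < \ldots$ from the infinite set $\rfreq$, the minimal antiperiods of the corresponding $v_{k_j}$ accumulate at $0$ and therefore take infinitely many distinct values, so the $v_{k_j}$ cannot be pairwise related by spatiotemporal shifts. I expect the main technical obstacle to be verifying that the energy-comparison step of \cref{prop:existence:perturbed}---where $\elmp$ is bounded by the ground state level of the purely periodic auxiliary problem---survives the restriction to $\mathcal{H}_k$, since this requires constructing the auxiliary periodic ground state inside $L^{\frac43}_k$ rather than invoking the unconstrained one from \cref{lem:existence:periodic}; once this bookkeeping is handled, everything else is a straightforward restriction of the earlier arguments.
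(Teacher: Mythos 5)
Your proposal is correct and follows essentially the same route as the paper: restrict $J$ to the subspace of functions with temporal frequency support in $k\Zodd$ (equivalently, $\tfrac{T}{2k}$-antiperiodic functions), rerun the existence machinery of \cref{thm:existence} there, and distinguish the resulting solutions by their minimal temporal (anti)periods, which tend to $0$. The only slip is cosmetic: $S_k^2$ is the shift by $\tfrac{T}{k}$, not the identity, so $S_k$ generates a $\Z/2k$-action rather than a $\Z/2$-action; symmetric criticality (or simply averaging over this finite cyclic group, i.e., noting that the projection onto the frequencies $k\Zodd$ commutes with $\iwop$) still applies.
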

\begin{proof}
	For fixed $m \in \Nodd$, we seek $\frac{T}{2 m}$-antiperiodic in time solutions to \eqref{eq:dual_wave}, or equivalently functions with frequency support on $\rfreq \cap m \Zodd$. 
	These solutions are precisely critical points of $J$ restricted to the space of $\frac{T}{2m}$-antiperiodic functions.
	
	For infinitely many $m$ we have $\rfreq \cap m \Zodd \neq \emptyset$, and for these $m$ as in \cref{thm:existence} we obtain existence of a nonzero $\frac{T}{m}$-periodic solution $v_m$ to \eqref{eq:dual_wave}. Each $v_m$ has a minimal temporal period $T_m > 0$ which is a divisor of $\frac{T}{m}$. 
	From $0 < T_m \leq \frac{T}{m}$ we see that there exist infinitely many distinct minimal periods, and the corresponding solutions $v_m$ clearly are not shifts of one another.
\end{proof}


\section{Regularity}\label{sec:regularity}

In this section, we discuss differentiability and integrability properties of a solution $v$ to \eqref{eq:dual_wave} as well as for corresponding solutions $u$ to \eqref{eq:variational_wave}, $w$ to \eqref{eq:scalar_wave} and $\bfE, \bfD, \bfB, \bfH$ to Maxwell's equations.

\begin{lemma}\label{lem:regularity:u}
	Let $v \in L^{\frac43}(\R \times \T; \R)$ be a solution to \eqref{eq:dual_wave}. Then $u \coloneqq h^{-\frac14} v^{\frac13}$ lies in $\wdom$ and is a weak solution to \eqref{eq:variational_wave} with $\partial_t^l \partial_x^m u \in L^2 \cap L^\infty(\R \times \T; \R)$ for $l \in \N_0$ and $m \in\set{0,1,2}$.
\end{lemma}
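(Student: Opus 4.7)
My plan is to extract the first two claims from the algebraic derivation of the dual problem, and to establish regularity by bootstrapping in the temporal-Fourier picture. From the dual equation $v^{\frac13} = \iwop v$ and the definition $\iwop = h^{\frac14} \wop^{-1} \iota' h^{\frac14}$, multiplying by $h^{-\frac14}$ gives
\begin{align*}
	u = h^{-\frac14} v^{\frac13} = \wop^{-1}\bigl(\iota'(h^{\frac14} v)\bigr).
\end{align*}
Since $v \in L^{\frac43}(\R \times \T)$ and $h \in L^\infty$ by~\ref{ass:h}, I would conclude $h^{\frac14} v \in L^{\frac43}$, hence $\iota'(h^{\frac14} v) \in \wdom'$ via boundedness of $\iota \colon \wdom \embeds L^4$ (\cref{prop:Lp_embedding}); invertibility of $\wop \colon \wdom \to \wdom'$ (\cref{lem:wop_defined}) then yields $u \in \wdom$. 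For the weak formulation, I would check that every $\varphi \in \wdom$ satisfies
\begin{align*}
	\wop[u][\varphi] = \ip{\iota'(h^{\frac14} v)}{\varphi} = \int_{\R \times \T} h^{\frac14} v \, \varphi \der{(x, t)} = \int_{\R \times \T} h u^3 \varphi \der{(x, t)},
\end{align*}
using the pointwise identity $h u^3 = h \cdot h^{-\frac34} v = h^{\frac14} v$.

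For the regularity claim I would work with the temporal-Fourier form of $\wop u = \iota'(h u^3)$: since $u$ is supported on frequencies $k \in \rfreq$, for each such $k$ the equation becomes the second-order ODE
\begin{align*}
	-\hat u_k''(x) - \omega^2 k^2 \bigl(V(x) + \hat \calG_k(x)\bigr) \hat u_k(x) = \omega^2 k^2 \hat \calN_k \widehat{h u^3}_k(x), \qquad x \in \R.
\end{align*}
The spectral gap of~\ref{ass:spec} combined with the coefficient bound~\ref{ass:g1} yields invertibility of the left-hand operator on $L^2(\R)$ with norm of order $(\delta \abs{k}^\gamma)^{-1}$. Together with $\abs{\hat \calN_k} \lesssim \abs{k}^{-\alpha}$ from~\ref{ass:calN}, applying $\wop^{-1} \iota'$ improves Fourier-frequency decay by $\abs{k}^{2-\alpha-\gamma}$; since~\ref{ass:L4_embed} gives $\alpha + \gamma - 2 > \beta \geq \tfrac12$, each application provides a strict gain of more than half a temporal derivative.

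The bootstrap then starts from $u \in L^p(\R \times \T)$ for some $p > 4$ (\cref{prop:Lp_embedding,rem:small_derivative_bounded}), so that $h u^3 \in L^{p/3}$; inserting into $u = \wop^{-1} \iota'(h u^3)$ and exploiting the above Fourier gain upgrades the temporal-Sobolev regularity of $u$. Iterating this step, and controlling the derivatives of $u^3$ in mixed $L^p$-spaces via H\"older, should yield $\partial_t^l u \in L^2 \cap L^\infty(\R \times \T)$ for every $l \in \N_0$; the $L^\infty$-bound comes from Sobolev embedding in the two-dimensional $(x,t)$-plane once sufficiently many weighted $L^2$-derivatives are controlled. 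Spatial derivatives then follow by reading off the pointwise equation
\begin{align*}
	\partial_x^2 u = V(x) \partial_t^2 u + \partial_t^2 (\calG \ast u) + \partial_t^2 \calN \ast (h u^3),
\end{align*}
each right-hand-side term being in $L^2 \cap L^\infty$ by the temporal regularity, and the intermediate $\partial_x u \in L^2 \cap L^\infty$ following by standard interpolation applied slice-wise in $t$ (integration by parts for $L^2$, Landau--Kolmogorov for $L^\infty$). The main obstacle will be making the temporal bootstrap rigorous: because the per-iteration gain $\alpha + \gamma - 2 - \beta$ may be small, infinitely many iterations are required to reach arbitrary $l$, and one must keep careful track of the Fourier-series convergence throughout.
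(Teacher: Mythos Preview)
Your derivation that $u \in \wdom$ and is a weak solution (the first paragraph) is exactly what the paper does. Your treatment of spatial derivatives via the pointwise identity $\partial_x^2 u = V \partial_t^2 u + \partial_t^2 \calG \ast u + h \partial_t^2 \calN \ast \rproj[u^3]$ also matches the paper's Part~4.

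The temporal bootstrap, however, is where your plan is shakier than the paper's. You phrase the gain as ``applying $\wop^{-1}\iota'$ improves Fourier-frequency decay by $\abs{k}^{2-\alpha-\gamma}$'' and propose to control derivatives of $u^3$ ``via H\"older''. The difficulty is that cubing convolves the temporal Fourier supports, so decay of $\hat u_k$ does not transfer to $\widehat{u^3}_k$ without a product estimate; and since each iteration only gains a \emph{fractional} amount, H\"older plus the integer Leibniz rule is not enough. The paper resolves this cleanly by commuting $\abs{\partial_t}^\eps$ (which commutes with $\wop$ and with multiplication by $h(x)$) through the equation $u = \wop^{-1}\iota'(h u^3)$, and invoking the fractional Leibniz (Kato--Ponce) inequality to get $\norm{\abs{\partial_t}^\eps[u^3]}_{4/3} \lesssim (\norm{u}_4 + \norm{\abs{\partial_t}^\eps u}_4)^3$. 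With \cref{rem:small_derivative_bounded} ($\abs{\partial_t}^\eps \colon \wdom \to L^4$ bounded for small $\eps$), this yields $\abs{\partial_t}^\eps u \in \wdom$, hence $\abs{\partial_t}^{2\eps} u \in L^4$, and so on. Your detour through $L^{p/3}$ for $p > 4$ is a red herring: the paper stays at the $L^4$/$L^{4/3}$ pairing throughout and tracks only temporal regularity.

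For the $L^\infty$-bound, your proposed two-dimensional Sobolev embedding would in principle work, since membership of $\abs{\partial_t}^s u$ in $\wdom$ controls an $H^1_x$-norm of each Fourier mode; but the paper does it more directly mode-by-mode via $\norm{\hat u_k}_\infty \lesssim \norm{\hat u_k}_{H^1}$ and then summing against $k$-weights controlled by the $\wdom$-norm. The approaches are close; yours is not wrong, but the paper's avoids setting up a two-dimensional Sobolev framework.
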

\begin{proof}
	\textit{Part 1:} 
	As $v$ solves \eqref{eq:dual_wave}, the function $u$ satisfies
	\begin{align*}
		u 
		= h^{-\frac14} \iwop v
		= \iota \wop^{-1} \iota' h^{\frac14} v
		= \iota \wop^{-1} \iota' h u^3,
	\end{align*}
	so that $u = \wop^{-1} \iota' h^{\frac14} v \in \wdom$.
	Then for $\varphi \in \wdom$ we have
	\begin{align*}
		0 = \wop[u - \wop^{-1} \iota' h u^3][\varphi]
		= \wop[u][\varphi] - \int_{\R \times \T} h u^3 \varphi \der (x, t),
	\end{align*}
	i.e., $u$ is a weak solution to \eqref{eq:variational_wave}.

	\textit{Part 2:} 
	According to \cref{rem:small_derivative_bounded}, we fix $\eps > 0$ such that $\abs{\partial_t}^\eps \colon \wdom \to L^4(\R \times \T; \R)$ is bounded, therefore $\abs{\partial_t}^\eps u \in L^4(\R \times \T; \R)$ holds. 
	From the fractional Leibniz rule (cf. \cite{benyi_oh_zhao}) we obtain $\abs{\partial_t}^\eps [u^3] \in L^{\frac43}(\R \times \T; \R)$ with norm $\norm{\abs{\partial_t}^\eps [u^3]}_{\frac43} \lesssim \left( \norm{u}_4 + \norm{\abs{\partial_t}^\eps u}_4 \right)^3$. Thus
	\begin{align*}
		\abs{\partial_t}^\eps u = \abs{\partial_t}^\eps \wop^{-1} \iota' h u^3
		= \wop^{-1} \iota' h \abs{\partial_t}^\eps[u^3] \in \wdom
	\end{align*}
	holds, and from the embedding $\abs{\partial_t}^{2\eps} u \in L^4$ follows. Iterating the above argument, we get $\abs{\partial_t}^{n \eps} \in \wdom$ for all $n \in \N_0$. 
	Recall that $\wdom$ is supported on frequencies $k \in \rfreq$ and that $0 \not\in \rfreq$. Therefore $\abs{\partial_t}^s \colon \wdom \to \wdom$ is bounded for all $s \leq 0$, and the above shows $\abs{\partial_t}^s u \in \wdom$ for all $s \in \R$.

	\textit{Part 3:} From boundedness of the embedding $\wdom \embeds L^2$ (see \cref{prop:Lp_embedding}) it follows that $\abs{\partial_t}^s u \in L^2$ for all $s \in \R$. We next calculate
	\begin{align*}
		\norm{\abs{\partial_t}^s u}_\infty^2
		\leq \left( \sum_{k \in \rfreq} \abs{\omega k}^{s} \norm{\hat u_k}_\infty \right)^2
		\leq \sum_{k \in \rfreq} \frac{1}{\omega^2 k^2} \cdot \sum_{k \in \rfreq} \abs{\omega k}^{2s + 2}\norm{\hat u_k}_\infty^2
		\lesssim \sum_{k \in \rfreq} \abs{\omega k}^{2s + 2}\norm{\hat u_k}_{H^1}^2.
	\end{align*}
	Using \cref{lem:basic_calc_properties,rem:abs_symbol_estimate} and assumption~\ref{ass:calN} we further estimate
	\begin{align*}
		\sum_{k \in \rfreq} \abs{\omega k}^{2s + 2} \norm{\hat u_k}_{H^1}^2
		&\lesssim \sum_{k \in \rfreq} \abs{\omega k}^{2s + 2} \int_\R (1 + \lambda) T_i[\hat u_k] \overline{T_j[\hat u_k]} \der \mu_{ij}(\lambda)
		\\ &\lesssim \sum_{k \in \rfreq} \abs{k}^{2 s + 4 - \gamma} \int_\R \abs{\lambda - \omega^2 k^2} T_i[\hat u_k] \overline{T_j[\hat u_k]} \der \mu_{ij}(\lambda)
		\\ &\lesssim \sum_{k \in \rfreq} \abs{\omega k}^{2 s + 6 - \gamma - \alpha} \int_\R \abs{\frac{\lambda - \omega^2 k^2}{\omega^2 k^2 \hat \calN_k}} T_i[\hat u_k] \overline{T_j[\hat u_k]} \der \mu_{ij}(\lambda)
		\\ &= \norm{\abs{\partial_t}^{\frac{2s + 6 - \gamma - \alpha}{2}} u}_\wdom^2 < \infty
	\end{align*}
	for any $s \in \R$. In particular, we have $\partial_t^l u \in L^2 \cap L^\infty$ for all $l \in \N_0$.
	
	\textit{Part 4:} 
	For regularity of spatial derivatives, from \eqref{eq:scalar_wave:1:variational} we obtain the identity
	\begin{align}\label{eq:second_space_derivative_formula}
		\partial_x^2 u = V(x) \partial_t^2 u + \partial_t^2 \calG \ast u + h \partial_t^2 \calN \ast \rproj[u^3].
	\end{align}
	Observe that the right-hand side of \eqref{eq:second_space_derivative_formula} is infinitely differentiable in time with values in ${L^2 \cap L^\infty}$ since $u, u^3$ are and the convolution operators $\calG\ast, \calN \ast, \rproj$ are regularity preserving (see \cref{lem:polynomial_multiplier} below), so the same holds for $\partial_x^2 u$.
\end{proof}

\begin{lemma}\label{lem:polynomial_multiplier}
	Let $M$ be a Fourier multiplier with symbol $\hat M_k$ of at most polynomial growth, i.e., $\abs{\hat M_k} \lesssim \abs{k}^r$ for some $r > 0$ and all $k \in \Z$. Further let $p \in [1, \infty]$ and $f \colon \T \to \C$ be a function with $\partial_t^n f \in L^p(\T; \C)$ for all $n \in \N_0$ and $\hat f_0 = 0$. Then $\partial_t^n M f \in L^p(\T; \C)$ for all $n \in \N_0$.
\end{lemma}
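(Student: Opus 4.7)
The plan is to exploit that $f$'s Fourier coefficients decay faster than any polynomial, which easily dominates the polynomial growth of $\hat M_k$.

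First I would recall that for any $g \in L^1(\T)$ we have $\Abs{\hat g_k} \leq \norm{g}_{L^1(\T)}$, and since $\T$ carries normalized measure, $L^p(\T) \embeds L^1(\T)$ for every $p \in [1,\infty]$. Applying this to $g = \partial_t^m f$, I obtain
\begin{align*}
	\abs{\omega k}^m \Abs{\hat f_k}
	= \Abs{\widehat{\partial_t^m f}_k}
	\leq \norm{\partial_t^m f}_{L^1(\T)}
	\lesssim \norm{\partial_t^m f}_{L^p(\T)}
\end{align*}
for every $m \in \N_0$ and $k \in \Z$. Since by assumption the right-hand side is finite for every $m$, this yields the super-polynomial decay $\Abs{\hat f_k} \lesssim_m \abs{k}^{-m}$ for all $m \in \N$ and $k \in \Z \setminus \set{0}$.

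Next I would write out the Fourier expansion
\begin{align*}
	\partial_t^n M f(t) = \sum_{k \in \Z \setminus \set{0}} (\ii \omega k)^n \hat M_k \hat f_k \, e_k(t),
\end{align*}
where I used $\hat f_0 = 0$. The coefficients are bounded by $\abs{\omega k}^n \Abs{\hat M_k} \Abs{\hat f_k} \lesssim \abs{k}^{n+r} \Abs{\hat f_k}$, which by the previous step is $\lesssim_m \abs{k}^{n + r - m}$ for every $m \in \N$. Choosing $m > n + r + 1$ makes the series absolutely summable in $k$, so the Fourier series converges uniformly on $\T$.

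Consequently $\partial_t^n M f \in C(\T; \C) \subseteq L^\infty(\T; \C) \subseteq L^p(\T; \C)$, which gives the claim for every $n \in \N_0$ and every $p \in [1, \infty]$. There is no real obstacle in this argument; the only mild point is the need for the finite-measure embedding $L^\infty \embeds L^p$, which holds on $\T$ for all admissible $p$.
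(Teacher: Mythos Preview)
Your proof is correct, but it takes a different route than the paper. The paper factors the multiplier as $M = m \ast \partial_t^{\lceil r+1\rceil}$, where $m \in L^2(\T) \subseteq L^1(\T)$ is the function with Fourier coefficients $(\ii\omega k)^{-\lceil r+1\rceil}\hat M_k$, and then applies Young's convolution inequality to get $\norm{\partial_t^n Mf}_p \leq \norm{m}_1 \norm{\partial_t^{n+\lceil r+1\rceil} f}_p$. Your argument instead shows directly that the Fourier coefficients of $f$ decay super-polynomially and hence the Fourier series of $\partial_t^n Mf$ converges absolutely, landing in $C(\T)$. Your approach is more elementary (no Young's inequality needed) and in fact yields the slightly stronger conclusion $\partial_t^n Mf \in C(\T)$; the paper's approach has the minor advantage of producing an explicit quantitative bound $\norm{\partial_t^n Mf}_p \lesssim \norm{\partial_t^{n+\lceil r+1\rceil} f}_p$ that only uses finitely many derivatives of $f$, though this is not exploited anywhere in the paper.
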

\begin{proof}
	The symbol $(\ii \omega k)^{-\ceil{r+1}} \hat M_k$ is square-summable and therefore
	\begin{align*}
		m(t) \coloneqq \sum_{k \in \Z\setminus\set{0}} (\ii \omega k)^{-\ceil{r+1}} \hat M_k e_k(t)
	\end{align*}
	converges in $L^2(\T; \C)$. The claim follows from this and
	\begin{align*}
		\norm{\partial_t^n M f}_p
		= \norm{\partial_t^{n+ \ceil{r+1}} m \ast f}_p
		\leq \norm{m}_1 \norm{\partial_t^{n+\ceil{r+1}} f}_p.
		&\qedhere
	\end{align*}
\end{proof}

After having shown regularity of the solution $u$ to \eqref{eq:variational_wave}, we now consider the profile $w$ of the electric field. For polarization \eqref{eq:scalar_polarization:1} there is nothing to do since $w = u$. So let us discuss polarization \eqref{eq:scalar_polarization:2} where $w$ is defined by \eqref{eq:wave_reconstruction:polarization2}. 

\begin{lemma}\label{lem:regularity:w}
	Let $v \in L^{\frac43}(\R \times \T; \R)$ be a solution to \eqref{eq:dual_wave}. We define $u \coloneqq h^{-\frac14} v^{\frac13}$ as in \cref{lem:regularity:u}, and $w$ by \eqref{eq:wave_reconstruction:polarization2}. Then $\partial_t^l \partial_x^m w \in L^2 \cap L^\infty(\R \times \T; \R)$ for all $l \in \N_0$ and $m \in \set{0,1,2}$.
\end{lemma}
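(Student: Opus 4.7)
Following \eqref{eq:wave_reconstruction:polarization2} I split $w = w_1 + w_2$ into parts supported on the complementary frequency sets $\rfreq$ and $\Zodd \setminus \rfreq$, where
\begin{align*}
	w_1 &\coloneqq (\calN \ast)^{-1} u,
	\\ w_2 &\coloneqq \bigl(-\partial_x^2 + V(x)\partial_t^2 + \partial_t^2 \calG(x) \ast\bigr)^{-1}\bigl[-h(x)\partial_t^2 (\mathrm{Id} - \rproj)[u^3]\bigr],
\end{align*}
and treat them separately.

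For $w_1$, the lower bound $\Abs{\hat\calN_k} \gtrsim \abs{k}^{-s}$ from \ref{ass:add_for_polarization_2} says that $(\calN \ast)^{-1}$, restricted to functions with frequency support in $\rfreq$, is a temporal Fourier multiplier whose symbol grows at most polynomially. Since it commutes with spatial differentiation and \cref{lem:regularity:u} supplies $\partial_x^m u \in L^2 \cap L^\infty$ for $m \in \set{0, 1, 2}$ together with every temporal derivative, \cref{lem:polynomial_multiplier} applied to $\partial_x^m u$ yields $\partial_t^l \partial_x^m w_1 \in L^2 \cap L^\infty$ for all $l \in \N_0$ and $m \in \set{0, 1, 2}$.

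For $w_2$ I work one temporal Fourier coefficient at a time: for each $k \in \Zodd \setminus \rfreq$ the coefficient $\hat w_{2,k}$ solves the ODE
\begin{align*}
	\bigl(L - \omega^2 k^2 - \omega^2 k^2 \tfrac{\hat\calG_k(x)}{V(x)}\bigr) \hat w_{2,k} = \omega^2 k^2 \tfrac{h(x)}{V(x)} \widehat{u^3}_k.
\end{align*}
The spectral gap in \ref{ass:add_for_polarization_2} implies that $(L - \omega^2 k^2)^{-1}$ on $L^2_V(\R)$ has norm $\leq (\tilde\delta \abs{k}^{\tilde\gamma})^{-1}$, and the multiplication operator $\omega^2 k^2 \hat\calG_k / V$ is bounded by $\tilde d\abs{k}^{\tilde\gamma} < \tilde\delta\abs{k}^{\tilde\gamma}$; a Neumann series then inverts the left-hand operator with $L^2_V$-norm $\lesssim \abs{k}^{-\tilde\gamma}$. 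This gives $\norm{\hat w_{2,k}}_{L^2} \lesssim \abs{k}^{2-\tilde\gamma} \norm{\widehat{u^3}_k}_{L^2}$, and the ODE itself then controls $\norm{\partial_x^2 \hat w_{2,k}}_{L^2}$ by a power of $\abs{k}$ times $\norm{\widehat{u^3}_k}_{L^2}$. Combining with Sobolev embedding $H^2(\R) \embeds W^{1,\infty}(\R)$ and a final use of the ODE for the $L^\infty$ bound on $\partial_x^2 \hat w_{2,k}$, I obtain
\begin{align*}
	\norm{\partial_x^m \hat w_{2,k}}_{L^2(\R)} + \norm{\partial_x^m \hat w_{2,k}}_{L^\infty(\R)} \lesssim \abs{k}^{C}\bigl(\norm{\widehat{u^3}_k}_{L^2(\R)} + \norm{\widehat{u^3}_k}_{L^\infty(\R)}\bigr)
\end{align*}
for some fixed $C > 0$ and all $m \in \set{0, 1, 2}$.

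The remaining step, which I expect to be the main obstacle, is to sum $\sum_k (\ii k\omega)^l \partial_x^m \hat w_{2,k}(x) e_k(t)$ absolutely in $L^2$ and $L^\infty$, so as to identify the sum with $\partial_t^l \partial_x^m w_2$. By \cref{lem:regularity:u} every temporal derivative of $u$ lies in $L^2 \cap L^\infty$; a pointwise product argument and Parseval show the same for $u^3$, so $\norm{\widehat{u^3}_k}_{L^2}$ and $\norm{\widehat{u^3}_k}_{L^\infty}$ decay faster than any polynomial in $\abs{k}$. This super-polynomial decay absorbs both the temporal factor $\abs{k}^l$ and the spatial polynomial $\abs{k}^C$, ensuring absolute convergence and hence the asserted regularity of $w_2$. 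Together with the bounds on $w_1$ this yields the claim for $w = w_1 + w_2$.
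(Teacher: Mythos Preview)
Your proposal is correct and follows essentially the same route as the paper: split $w$ into its $\rfreq$ and $\Zodd\setminus\rfreq$ parts, handle the first via the polynomial-growth multiplier $(\calN\ast)^{-1}$ and \cref{lem:polynomial_multiplier}, and invert the second frequency-wise by a Neumann series using the spectral gap and the smallness of $\hat\calG_k$ from \ref{ass:add_for_polarization_2}. The only tactical difference is that you extract the spatial regularity of $w_2$ directly from the ODE and the embedding $H^2(\R)\embeds W^{1,\infty}(\R)$, whereas the paper first proves $\partial_t^l w\in L^2\cap L^\infty$ and then reads off $\partial_x^2 w$ from the equation \eqref{eq:loc:1}; both arguments are equivalent.
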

\begin{proof}
	\textit{Part 1:}
	By \cref{lem:regularity:u,lem:polynomial_multiplier} we already know that the functions 
	\begin{align*}
		\partial_t^l \rproj w = (\calN \ast)^{-1} \partial_t^l u
		\qquad\text{and}\qquad
		\partial_t^{l+2} (\mathrm{Id} - \rproj)[u^3]
	\end{align*}
	lie in $L^2 \cap L^\infty$ for any $l \in \N_0$. Hence it remains to show
	\begin{align*}
		\partial_t^l (\mathrm{Id} - \rproj) w 
		= \left( -\partial_x^2 + V(x) \partial_t^2 + \partial_t^2 \calG(x) \ast \right)^{-1} [- h(x) \partial_t^{l+2} (\mathrm{Id} - \rproj)[u^3]] \in L^2 \cap L^\infty,
	\end{align*}
	and in particular that $-\partial_x^2 + V(x) \partial_t^2 + \partial_t^2 \calG(x) \ast$ is invertible on suitable spaces of functions $f$ satisfying $\rproj f = 0$.
	
	\textit{Part 2:} Taking the Fourier series in time decomposes the linear operator $-\partial_x^2 + V(x) \partial_t^2 + \partial_t^2 \calG(x) \ast$ into the sequence of operators $(L_k)_{k \in \Zodd \setminus \rfreq}$ with
	\begin{align*}
		L_k \coloneqq -\partial_x^2 - \omega^2 k^2 V(x) - \omega^2 k^2 \hat \calG_k(x)
	\end{align*}
	which we split into main part $L_{k,0}$ and perturbation $L_{k,1}$ via
	\begin{align*}
		L_{k,0} \coloneqq -\partial_x^2 - \omega^2 k^2 V(x), 
		\qquad L_{k,1} \coloneqq - \omega^2 k^2 \hat \calG_k(x).
	\end{align*}

	First, for $\varphi \in H^2(\R; \C)$ using \cref{lem:basic_calc_properties,rem:abs_symbol_estimate} we calculate
	\begin{align*}
		\int_\R \abs{L_{k,0} \varphi}^2 \der[\frac{1}{V}] x
		&= \int_\R (\lambda - \omega^2 k^2)^2 T_i[\varphi] \overline{T_j[\varphi]} \der \mu_{ij}(\lambda) 
		\\ &\geq \int_\R \frac{(\tilde \delta \abs{k}^{\tilde \gamma})^2}{2} \left(1 + \frac{\lambda^2}{(\omega^2 k^2 + \delta \abs{k}^{\tilde \gamma})^2} \right) T_i[\varphi] \overline{T_j[\varphi]} \der \mu_{ii}(\lambda)
		\\ &\gtrsim \abs{k}^{2 \tilde \gamma - 4} \int_\R (1 + \lambda^2) T_i[\varphi] \overline{T_j[\varphi]} \der \mu_{ij}(\lambda)
		\\ &= \abs{k}^{2 \tilde \gamma - 4} \int_\R \Bigl(\abs{\frac{1}{V} \varphi''}^2 + \abs{\varphi}^2\Bigr) \der[V](x),
	\end{align*}
	and it follows that $L_{k,0} \colon H^2(\R; \C) \to L^2(\R; \C)$ is invertible with $\norm{L_{k,0}^{-1}} \lesssim \abs{k}^{2 - \tilde \gamma}$. For $L_{k,1}$, using \ref{ass:add_for_polarization_2} and \cref{rem:abs_symbol_estimate} we estimate
	\begin{align*}
		\int_\R \abs{L_{k,1} \varphi}^2 \der[\frac{1}{V}]{x}
		&\leq (\omega^2 k^2)^2 \left(\frac{\tilde d}{\omega^2 \abs{k}^{2 - \tilde \gamma}}\right)^2 \int_\R \abs{\varphi}^2 \der[V]{x}
		= \left( \tilde d \abs{k}^{\tilde \gamma}\right)^2 \int_\R T_i[\varphi] \overline{T_j[\varphi]} \der \mu_{ij}(\lambda)
		\\ &\leq \left(\tilde d \abs{k}^{\tilde \gamma}\right)^2 \int_\R \left(\frac{\abs{\lambda - \omega^2 k^2}}{\tilde \delta \abs{k}^{\tilde \gamma}}\right)^2 T_i[\varphi] \overline{T_j[\varphi]} \der \mu_{ij}(\lambda)
		= \frac{\tilde d^2}{\tilde \delta^2} \int_\R \abs{L_{k,0} \varphi}^2 \der[\frac{1}{V}]{x}.
	\end{align*}
	Since $\tilde d < \tilde \delta$, the Neumann series shows that $L_k = L_{k,0} + L_{k,1} \colon H^2(\R; \C) \to L^2(\R; \C; \frac{1}{V}\Der x)$ is invertible with 
	\begin{align*}
		\norm{L_k^{-1}} 
		\leq \frac{\norm{L_{k,0}^{-1}}}{1 - \norm{L_{k,1} L_{k,0}^{-1}}} 
		\leq \frac{\norm{L_{k,0}^{-1}}}{1 - \frac{\tilde d}{\tilde \delta}} 
		\lesssim \abs{k}^{2 - \tilde \gamma}.
	\end{align*}

	\textit{Part 3:} 
	We define the inverse of the linear operator by 
	\begin{align*}
		\left(-\partial_x^2 + V(x) \partial_t^2 + \partial_t^2 \calG(x) \ast\right)^{-1} \varphi 
		\coloneqq \sum_{k \in \Zodd \setminus \rfreq} L_k^{-1}[\hat \varphi_k](x) e_k(t)
	\end{align*}
	for $\varphi$ such that the series on the right-hand side converges in $L^2$.
	Let us abbreviate
	\begin{align*}
		f \coloneqq - h(x) \partial_t^{l+2} (\mathrm{Id} - \rproj)[u^3],
	\end{align*}
	which lies in $L^2 \cap L^\infty$ by \cref{lem:polynomial_multiplier} and boundedness of $h$, 
	and recall
	\begin{align*}
		\partial_t^l (\mathrm{Id} - \rproj) w 
		= \left( -\partial_x^2 + V(x) \partial_t^2 + \partial_t^2 \calG(x) \ast \right)^{-1} f 
		= \sum_{k \in \Zodd \setminus \rfreq} L_k^{-1}[\hat f_k](x) e_k(t).
	\end{align*}
	We show that this term lies in $L^2 \cap L^\infty$. First we estimate
	\begin{align*}
		\norm{\partial_t^l (\mathrm{Id} - \rproj)w}_2^2
		&= \sum_{k \in \Zodd \setminus\rfreq} \norm{L_k^{-1} \hat f_k}_2^2 
		\lesssim \sum_{k \in \Zodd \setminus\rfreq} \norm{L_k^{-1} \hat f_k}_{H^2}^2 
		\\ &\lesssim \sum_{k \in \Zodd \setminus\rfreq} \abs{\omega k}^{2 \ceil{2 - \tilde \gamma}} \norm{\hat f_k}_2^2
		= \norm{\partial_t^{\ceil{2 - \tilde \gamma}} f}_2^2 < \infty.
	\intertext{Next for $L^\infty$ we have}
		\norm{\partial_t^l (\mathrm{Id} - \rproj)w}_\infty^2
		&\leq \left( \sum_{k \in \Zodd \setminus \rfreq} \norm{L_k^{-1} \hat f_k}_\infty \right)^2
		\lesssim \sum_{k \in \Zodd \setminus \rfreq} k^2 \norm{L_k^{-1} \hat f_k}_\infty^2
		\\ &\lesssim \sum_{k \in \Zodd \setminus \rfreq} k^2 \norm{L_k^{-1} \hat f_k}_{H^2}^2
		\lesssim \sum_{k \in \Zodd \setminus \rfreq} \abs{\omega k}^{2 \ceil{3 - \tilde \gamma}} \norm{\hat f_k}_2^2
		= \norm{\partial_t^{\ceil{3 - \tilde \gamma}} f}_2^2 < \infty.
	\end{align*}
	Combined, we have shown that $\partial_t^l w = \partial_t^l \rproj w + \partial_t^l (1 - \rproj) w$ lies in $L^2 \cap L^\infty$.

	\textit{Part 4:}
	Using \eqref{eq:loc:1} we see
	\begin{align*}
		\partial_x^2 w = V(x) \partial_t^2 w + \partial_t^2 \calG \ast w + h \partial_t^2 u^3.
	\end{align*}
	where the right-hand side (and therefore $\partial_x^2 w$) is infinitely time-differentiable by the above and \cref{lem:polynomial_multiplier}.
\end{proof}

We are now ready to prove the main theorem.

\begin{proof}[Proof of \cref{thm:main}]
	Let $v \in L^{\frac43}(\R \times \T; \R)$ be a nonzero solution to \eqref{eq:dual_wave}, which exists by \cref{thm:existence}.
	Then $u \coloneqq h^{-\frac14} v^{\frac13}$ is the corresponding solution of \eqref{eq:variational_wave}. Define $w \coloneqq u$ for polarization \eqref{eq:polarization:1}, or define $w$ by \eqref{eq:wave_reconstruction:polarization2} for polarization \eqref{eq:polarization:2}. Then $\partial_t^l \partial_x^m w \in L^2 \cap L^\infty(\R \times \T; \R)$ for $l \in \N_0$ and $m \in \set{0,1,2}$ by \cref{lem:regularity:u} or \cref{lem:regularity:w}, and $w$ solves \eqref{eq:scalar_wave}.
	
	We define $\partial_t^{-1}$ on $\T$ as the Fourier multiplier with symbol $\frac{1}{\ii \omega k}$. Then, recalling the ansatz \eqref{eq:maxwell_ansatz} and Maxwell's equations \eqref{eq:maxwell},\eqref{eq:material:generic} we obtain the following formula for the corresponding solutions $\bfE, \bfD, \bfB, \bfH$ of Maxwell's equations:
	\begin{align*}
		\bfE(x, y, z, t) &= w(x, t - \frac{1}{c} z) \cdot \begin{pmatrix}
			0 \\ 1 \\ 0
		\end{pmatrix},
		\\ \bfB(x, y, z, t) &= - \partial_t^{-1} \nabla \times \bfE
		= - \begin{pmatrix}
			\frac{1}{c} w(x, t - \tfrac{1}{c} z)
			\\ 0 \\ W_x(x, t - \tfrac{1}{c} z)
		\end{pmatrix},
		\\ \bfH(x, y, z, t) &= \frac{1}{\mu_0} \bfB(x, y, z, t),
		\\ \bfD(x, y, z, t) &= \epsilon_0 (\bfE + \bfP(\bfE))
	\end{align*}
	where we have set $W \coloneqq \partial_t^{-1} w$. From the regularity of $w$ it follows that
	\begin{align*}
		\partial_t^l \bfE \in H^2 \cap W^{2,\infty}(\Omega; \R^3),
		\qquad
		\partial_t^l \bfB, \partial_t^l \bfH \in H^1 \cap W^{1,\infty}(\Omega; \R^3),
		\qquad
		\partial_t^l \bfD \in L^2 \cap L^\infty(\Omega; \R^3)
	\end{align*}
	for $l \in \N_0$ and domains of the form $\Omega = \R \times [y_1, y_2] \times [z_2, z_2] \times [t_1, t_2]$.
	Note that $\bfD$ may not be differentiable in space because the material coefficients $g_0, g_1, h$ of $\bfP$ were not assumed to be smooth.
\end{proof}
	
	\appendix

\section{}\label{sec:appendix}

Here we give a proof of \cref{lem:basic_calc_properties} on fundamental properties of the transform $T$ given by \cref{thm:functional_calc}. We also prove \cref{thm:example:periodic,thm:example:localized}. 

\begin{proof}[Proof of Lemma~\ref{lem:basic_calc_properties}]
	\textit{(a):} First let $f \in H^2(\R; \C)$ be compactly supported. 
	Then 
	\begin{align*}
		T[L f](\lambda) 
		= \int_\R L f(x) \Psi(x; \lambda) \der[V]{x}
		= \int_\R f(x) L \Psi(x; \lambda) \der[V]{x}
		= \int_\R f(x) \lambda \Psi(x; \lambda) \der[V]{x}
		= \lambda T[f](\lambda).
	\end{align*}
	Reversely, let $T[f]$ be compactly supported. Then 
	\begin{align*}
		T^{-1}[\lambda T[f]]
		= \int_\R T_i[f] \lambda \Psi_j(x; \lambda) \der \mu_{ij}(\lambda) 
		= \int_\R T_i[f] L \Psi_j(x; \lambda) \der \mu_{ij}(\lambda) 
		= L f
	\end{align*}
	lies in $L^2_V(\R; \C)$. By approximation, we obtain the result for general $f$.
	
	\textit{(b):} If $f \in H^2(\R; \C)$, using (a) and that $T$ is an isometry we have
	\begin{align*}
		\int_\R \abs{f'}^2 \der x
		= \int_\R L f \cdot \overline{f} \der[V] x
		= \int_\R T^{-1}[\lambda T[f](\lambda)] \cdot \overline{f} \der[V] x
		= \int_\R \lambda T_i[f](\lambda) \overline{T_j[f](\lambda)} \der \mu_{ij}(\lambda).
	\end{align*}
	For general $f$, one can argue by approximation.
	
	\textit{(c):} 
	First let $\lambda_0 \in \C \setminus \supp(\mu)$. Then $\frac{1}{\lambda - \lambda_0}$ is a bounded symbol on $\supp(\mu)$, thus 
	\begin{align*}
		(L - \lambda_0)^{-1} = T^{-1}\Bigl[\frac{1}{\lambda - \lambda_0} T[\impvar](\lambda)\Bigr]
	\end{align*}
	is bounded, i.e., $\lambda_0 \in \rho(L)$ holds.
	
	Now let $\lambda_0 \in \supp(\mu)$, i.e., $\lambda_0 \in \supp(\mu_{ij})$ for some $i, j \in \set{1, 2}$. By definition we have
	\begin{align*}
		\abs{\mu_{ij}}(B_\eps(\lambda_0)) > 0
	\end{align*}
	for all $\eps > 0$, with $\abs{\mu_{ij}}$ denoting the total variation of $\mu_{ij}$.
	Split $B_\eps(\lambda_0) = E^+ \cup E^-$ with $E^+ \cap E^- = \emptyset$ into positive and negative part according to the measure $\mu_{ij}$. Then we have 
	\begin{align*}
		0 < \abs{\mu_{ij}}(B_\eps(\lambda_0))
		= \mu_{ij}(E^+) - \mu_{ij}(E^-)
		\leq \sqrt{\mu_{ii}(E^+) \mu_{jj}(E^+)}
		+ \sqrt{\mu_{ii}(E^-) \mu_{jj}(E^-)}
	\end{align*}
	since $\mu(E^{\pm})$ are positive semidefinite matrices. This implies 
	\begin{align*}
		\mu_{ii}(B_\eps(\lambda_0)) 
		= \mu_{ii}(E^+) + \mu_{ii}(E^-) > 0,
		\qquad
		\mu_{jj}(B_\eps(\lambda_0)) 
		= \mu_{jj}(E^+) + \mu_{jj}(E^-) > 0
	\end{align*}
	since $\mu_{ii}, \mu_{jj}$ are a nonnegative measures, that is, $\lambda_0 \in \supp(\mu_{ii}) \cap \supp(\mu_{jj})$. 
	
	We denote by $\delta_i$ the $i$-th unit vector, and consider the function $f \coloneqq T^{-1}[\bbone_{B_\eps(\lambda_0)} \delta_i]$. Then since
	\begin{align*}
		\frac{\norm{(L - \lambda_0) f}_{L^2_V}}{\norm{f}_{L^2_V}}
		= \frac{\norm{(\lambda - \lambda_0) \bbone_{B_\eps(\lambda_0)}}_{L^2(\mu_{ii})}}{\norm{\bbone_{B_\eps(\lambda_0)}}_{L^2(\mu_{ii})}} \leq \eps
	\end{align*}
	for arbitrary $\eps > 0$, $L - \lambda_0$ has no bounded inverse, i.e., $\lambda_0 \in \sigma(L)$ holds.
\end{proof}

\begin{proof}[Proof of \cref{thm:example:periodic,thm:example:localized}]
	For both theorems, we check that the assumptions of \cref{thm:main} are satisfied. 
	First \ref{ass:h}, \ref{ass:g0} hold by definition, and the same is true for \ref{ass:perturbed_nonlin} in \cref{thm:example:periodic}, and for \ref{ass:localized_nonlin} in \cref{thm:example:localized}.
	Next we calculate the Fourier coefficients
	\begin{align*}
		\hat \calN_k 
		= \int_\T T \dist(t, T \Z) \ee^{-\ii \omega k t} \der t
		= \int_0^T \dist(t, T \Z) \ee^{-\ii \omega k t} \der t
		= \begin{cases}
			\frac{T^2}{4}, & k = 0, 
			\\ 0, & k \neq 0 \text{ even},
			\\ - \frac{T^2}{2 k^2 \pi^2}, & k \text{ odd},
		\end{cases}
	\end{align*}
	and see that \ref{ass:calN} holds with $\alpha = 2$. The spectrum of the operator $L$ is investigated in \cite[Appendix~C]{henninger_ohrem_reichel}. There, it is shown that $L$ has a spectral gap about $\omega^2 k^2$ for each $k \in \Zodd$, that the size of the gap grows linearly in $\abs{k}$, and moreover that the point spectrum grows quadratically, i.e., \ref{ass:spec} holds with $\gamma = 1$, $\beta = \frac12$ and some $\delta > 0$. Next \ref{ass:L4_embed} holds since $\alpha + \gamma - 2 = 1 > \frac12 = \beta$.
	
	We calculate
	\begin{align*}
		\hat \calG_k(x)
		= g_1^\per(x) \int_0^T \cos(\omega t) \abs{\cos(\omega t)} \ee^{-\ii \omega k t} \der t
		= \begin{cases}
			0, & k \text{ even},
			\\ g_1^\per(x) \frac{4 T (-1)^n}{(4 k - k^3) \pi}, & k = 2 n + 1 \text{ odd},
		\end{cases}
	\end{align*}
	and find $\abs{\hat \calG_k(x)} \leq \frac{C}{k^2} \frac{1}{\omega^2 \abs{k}} V(x)$. Since in \ref{ass:g1} we require $d < \delta$, this only shows \ref{ass:g1} for sufficiently large $k$.
	Similarly, \ref{ass:add_for_polarization_2} holds with $s = 2$ but only for sufficiently large $k$. 

	This restriction to large frequencies is not an issue:
	Because $\rfreq = \set{k \in \Zodd \colon \hat \calN_k \neq 0} = \Zodd$ is infinite, by \cref{prop:multiplicity} and its proof there exist infinitely many distinct breathers that are supported only on large frequencies.
\end{proof}
	Funded by the Deutsche Forschungsgemeinschaft (DFG, German Research Foundation) – Project-ID 258734477 – SFB 1173

	\printbibliography
\end{document}